\numberwithin{equation}{section}
\numberwithin{figure}{section}
\numberwithin{table}{section}
\newtheorem{theorem}{Theorem}[section]
\newtheorem{lemma}[theorem]{Lemma}
\newtheorem{construction}[theorem]{Construction}
\newtheorem{property}[theorem]{Property}
\newtheorem{assumption}[theorem]{Assumption}
\newtheorem{proposition}[theorem]{Proposition}
\theoremstyle{remark}
\newtheorem{remark}[theorem]{Remark}
\numberwithin{equation}{section}
\newcommand{\be}{\begin{equation}}
\newcommand{\ee}{\end{equation}}
\def\bc{\begin{center}}
\def\ec{\end{center}}
\def\bea{\begin{eqnarray}}
\def\eea{\end{eqnarray}}
\newcommand{\set}[1]{\left\lbrace{#1}\right\rbrace}
\DeclarePairedDelimiterX\braket[2]{\langle}{\rangle}{#1 \delimsize\vert #2}
\DeclarePairedDelimiterX{\infdivx}[2]{(}{)}{%
  #1\;\delimsize\|\;#2%
}
\newcommand{\clp}{\mathcal{P}}
\newcommand{\clk}{\mathcal{K}}
\newcommand{\cll}{\mathcal{L}}
\newcommand{\clf}{\mathcal{F}}
\newcommand{\clt}{\mathcal{T}}
\newcommand{\clm}{\mathcal{M}}
\newcommand{\cls}{\mathcal{S}}
\newcommand{\clc}{\mathcal{C}}
\newcommand{\cla}{\mathcal{A}}
\newcommand{\N}{\mathbb{N}}
\newcommand{\Y}{\mathbb{Y}}
\newcommand{\B}{\mathbb{B}}
\newcommand{\E}{\mathbb{E}}
\newcommand{\R}{\mathbb{R}}
\newcommand{\X}{\mathbb{X}}
\newcommand{\PP}{\mathbb{P}}
\newcommand{\bfq}{\mathbf{q}}
\newcommand{\sos}{A}
\newcommand{\Om}{\Omega}
\newcommand{\om}{\omega}
\newcommand{\la}{\lambda}
\newcommand{\dd}{d}
\newcommand{\eps}{\varepsilon}
\newcommand{\al}{\alpha}
\newcommand{\cons}{\varkappa}
\newcommand{\flu}{\varsigma}
\newcommand{\IDW}{I^{\mbox{\tiny{DW}}}}
\definecolor{aqua}{rgb}{0.0, 1.0, 1.0}
\definecolor{boo}{rgb}{1.0, 0.0, 1.0}
\definecolor{stred}{rgb}{1.0, 0.2, 0.37}
\begin{document}

\title{Jump Processes with Self-Interactions: \\ Large Deviation Asymptotics}

\date{\today}

\author{Amarjit Budhiraja}

\author{Francesco Coghi}

\begin{abstract}
We consider a pure jump process $\{X_t\}_{t\ge 0}$ with values in a finite state space $S= \{1, \ldots, d\}$ for which the jump rates at time instant $t$ depend on the occupation measure $L_t \doteq \frac1t \int_0^t \delta_{X_s}\,ds$. Such self-interacting chains arise in many contexts within statistical physics and applied probability. Under appropriate conditions, a large deviation principle is established for the pair $(L_t, R_t)$, as $t \to \infty$, where $R_t$ is the empirical flux process associated with the jump process. We show that the rate function takes a simple form that can be viewed as a dynamical generalization of the classical Donsker and Varadhan rate function for the analogous quantities in the setting of Markov processes, in particular, unlike the Markovian case, the rate function is not convex. Since the state process is non-Markovian, different techniques are needed than in the setting of Donsker and Varadhan and our proofs rely on variational representations for functionals of Poisson random measures and stochastic control methods.\\ 

\smallskip

\noindent {\bf AMS 2020 subject classifications:}
60F10, 
60J74, 
34H05.\\ 

\smallskip

\noindent{\bf Keywords:} Level $2.5$ LDP, reinforced random walks, Laplace asymptotics, weak convergence method, empirical current, stochastic approximations.

\end{abstract}

\maketitle

\section{Introduction}

We consider a pure jump process $\{X_t\}_{t\ge 0}$ with values in a finite state space $S= \{1, \ldots, d\}$ for which the jump rates at time instant $t$ depend on the occupation measure $L_t \doteq \frac1t \int_0^t \delta_{X_s}\,ds$. This dependence is specified through a collection of $d\times d$ rate matrices (infinitesimal generators) $\{Q(\gamma), \gamma \in \clp(S)\}$, where
$\clp(S)$ is the space of probability measures on $S$. Roughly speaking, at time instant $t$, conditionally on $\sigma\{X(s): 0 \le s \le t\}$, and given $X(t)=i$, the jump rate to state $j \neq i$ is   $Q_{ij}(L_t)$. A precise definition is given in terms of a certain martingale problem characterization; see Section \ref{sec:2}.  Such processes are usually referred to as self-interacting Markov jump processes and arise in many different contexts; see discussion at the end of this Introduction. We note however that the evolution of $\{X_t\}$ is not Markovian and to get a Markov state descriptor, one needs to consider the pair $\{(X_t, L_t)\}$ which defines a time inhomogeneous Markov process with state space $S\times \clp(S)$. We are interested in the large time behavior of the process under a large deviation scaling. The specific objects we study are the occupation measure process $\{L_t\}_{t\ge 0}$
and the $\R_+^{d(d-1)}$ valued {\em empirical flux} process $\{R_t\}_{t\ge 0}$ where
$R_t = (R_t^{xy}, x,y \in S, x\neq y)$ and $R_t^{xy}$ is the time averaged number of transitions from state $x$ to state $y$ over the time interval $[0,t]$; see \eqref{eq:flux} for a precise definition.

Our results give a large deviation principle (LDP) for $\{(L_t, R_t)\}_{t\ge 0}$,  in $\clp(S)\times \R_+^{d(d-1)}$, as $t\to \infty$ under suitable conditions on $Q(\cdot)$; see Section \ref{sec:2} and also, comments on these conditions later in the Introduction. In the large deviation literature such a result is referred to as a Level $2.5$ LDP, and, in comparison to an LDP for the occupation measure alone, it captures important dynamic details of the process by keeping track of not only how often different states are visited but also how often different types of state transitions occur. Through a contraction principle, such a result also gives an LDP for the $\R^{d(d-1)}$ valued {\em empirical current} process $\{J_t\}_{t\ge 0}$, where $J_t = (J_t^{xy}, x,y \in S, x\neq y)$ and $J_t^{xy} = R_t^{xy}- R_t^{yx}$. 

In the setting where $Q(\gamma) \equiv Q^0$ for all $\gamma \in \clp(S)$, for some irreducible rate matrix $Q^0$, these large deviation results are classical from the work of Donsker and Varadhan \cite{donvar1,Bertini2015}.
The rate function, $\IDW_{Q^0}: \clp(S) \times \R_+^{(d(d-1)} \to [0,\infty]$, governing the LDP for $(L_t, R_t)$ in this Markovian setting is given as
\be
\IDW_{Q^0}(\gamma, \flu) =
\begin{cases}
\sum_{x\neq y} \gamma_x Q^0_{xy} \ell\left(\frac{\flu^{xy}}{\gamma_xQ^0_{xy}}\right) & \mbox{ if } \sum_{y: y \neq x}\flu^{xy} = \sum_{y: y \neq x} \flu^{yx} \mbox{ for all } x \in S\\
\infty & \mbox{otherwise},
\end{cases}
\ee
where $(\gamma, \flu) \in \clp(S)\times \R_+^{(d(d-1)}$ and $\ell(x) = x\log x-x +1$ for $x \ge 0$.

For the self-interacting case studied in this work, the rate function we obtain can be viewed as a certain dynamical version of the above rate function. A precise definition is in \eqref{eq:Alternative} (see also \eqref{eq:IGamma} for a different formula), however roughly speaking, the rate function we obtain takes the form
\be\label{eq:155nn}
I(\gamma, \flu) = \inf \int_0^{\infty} e^{-s} \IDW_{{Q(M(s))}}(\rho(s), \eta(s)) \, ds, \; (\gamma, \flu) \in \clp(S) \times \R_+^{d(d-1)},\ee
where the infimum is taken over all maps $(\rho, \eta): \R_+ \to \clp(S) \times \R_+^{d(d-1)}$ with $\gamma = \int_0^{\infty} e^{-s} \rho(s) ds$,
$\flu = \int_0^{\infty} e^{-s} \eta(s) ds${, and $M(t) = e^t \int_t^\infty e^{-s} \rho(s) \, ds$}.

An infinite horizon discounted cost in the definition of the rate function arises in a natural fashion from the evolution equation for the empirical measure process (see e.g. \eqref{eq:TimeEvolEmpOccMeas}) which says  that changes to the empirical measure become vanishingly small as $t$ becomes large. 
{These changes of order $ t^{-1}$ ensure that contributions to the rate function at large times are suitably discounted, as captured by the exponential factor in the rate function.}  In the Markovian setting it turns out that the infimum over $(\rho, \eta)$ as above is achieved with $\rho_s \equiv \gamma$ and $\eta_s \equiv \flu$ and thus the above formula for the rate function reduces to the formula given by the Donsker-Varadhan theory (see Section \ref{sec:examples}).  The form of the rate function above also captures a certain multiscale feature of the system which says that the state process $\{X_t\}$ rapidly equilibrates locally as the environment described by $\{L_t\}$ changes slowly. These local equilibria are seen in the form of the local rate function $\IDW_{{Q(M(s))}}(\gamma(s), \eta(s))$ that defines the overall rate function $I$.

For  discrete time self-interacting finite state Markov chains, similar large deviation behavior has recently been observed in \cite{Budhiraja2025} (see also \cite{BudWat3}). These works do not consider the empirical flux process but if one restricts to the LDP for the empirical measure process $\{L_t\}$ (obtained by an application of the contraction principle), the rate function we obtain in the current work has analogies with the rate functions in these works (e.g. in terms of an exponential discount factor and  multiscale feature of the dynamics). The proof techniques in the current work, which rely on variational formulae for moments of nonnegative functionals of Poisson random measures (PRM),  are however quite different.

We now give some background for the self-interacting models studied in this work.
Self-interacting stochastic processes, such as \textit{reinforced} random walks, have been an active area of research for several decades. Since the mid-1980s, both probabilists and statistical physicists have been attracted by the properties of edge-reinforced~\cite{Coppersmith1986} and vertex-reinforced random walks~\cite{Amit1983,Pemantle1988,Pemantle1992} in discrete time, originally motivated by models of growing polymers (chains with excluded volume interaction). First continuous-time extensions appeared in~\cite{Cranston1996,Raimond1997}, based on non-normalised occupation measures, followed by a series of works  that established self-interacting diffusions in compact spaces~\cite{Benaim2002,Benaim2003,Benaim2005,Benaim2011} and later extensions to general open domains~\cite{Kurtzmann2010,Chambeu2011,Kleptsyn2012}.  

Other examples of discrete-time self-interacting processes include generalised P\'{o}lya urns, which have been used to model genetic drift in biological species~\cite{Hoppe1984,sinliv,sch01} as well as novelty dynamics~\cite{Tria2014,Iacopini2020} within the emerging field of the science of science. A further motivation for studying self-interacting processes, in particular those depending on the \emph{normalised} empirical occupation measure as in this work, comes from applications to quasi-stationary distributions (QSD) of finite-state Markov chains, where reinforced Monte Carlo schemes have been employed~\cite{AFP88}, and from more general settings in sampling algorithms~\cite{Diaconis2000,Essler2024}.  

In statistical physics, self-interacting Markov jump processes are attracting interest as a natural setting in which to extend the framework of stochastic thermodynamics~\cite{Seifert2012,Peliti2021} beyond the Markovian limit. {Although non-Markovian formulations of stochastic thermodynamics have been approached in several ways~\cite{Zwanzig1973,Lapolla2019,Esposito2012,Kanazawa2024,Brandner2025a,Brandner2025b}, self-interacting processes seem to provide a distinct, physically motivated class where memory effects can be analyzed systematically, as shown in this work. Such memory effects arise naturally when 
addressing coarse-graining and measurement limitations~\cite{Mori1965,Schilling2022}. When hidden degrees of freedom or limited temporal resolution cause the system’s observed evolution to depend on its own past---through quantities such as empirical occupation measures---the dynamics become effectively self-interacting. The theoretical framework developed here provides a starting point to analyze and quantify emergent non-Markovian behavior, revealing how microscopic information loss gives rise to macroscopic memory and irreversibility.}

Most of the research on such self-interacting systems has  focused on the study of  the law of large numbers and central limit theorems, often via stochastic approximation theory and branching-process techniques, see e.g.~\cite{AFP88,benclo,BenClo18,blanchet2016,budfrawat2}. {As previously mentioned,} large deviation properties of self-interacting discrete-time Markov chains have only recently been addressed~\cite{BudWat3,Budhiraja2025}. The latter paper considers a finite state discrete time model and proves an LDP under quite general conditions on the transition mechanism that cover  many  types of self-interacting Markov chains such as the chains used for QSD approximations\cite{AFP88}, certain variants of edge reinforced and vertex reinforced random walks, a type of personalized PageRank algorithm, and certain generalized Pólya urn schemes.
For related literature on large deviations for discrete time models with reinforcement see \cite{Franchini17,HuaLiuKai,Zhang14}. The current paper is the first work to study large deviation behavior of general self-interacting Markov chains in continuous time.

We now make some comments on the assumptions that are made in our work. For the upper bound the only condition we require is that the map $\gamma \to Q(\gamma)$ is a continuous map.
The lower bound requires additional conditions that are summarized in Assumption \ref{assu:lowbd}. The four parts of this assumption are analogous to conditions that are needed in the analysis of the discrete time model in \cite{Budhiraja2025}, the first condition requires a strictly positive solution to the equation $\pi^*Q(\pi^*)=0$; the second 
says that  $Q$ is an affine map; the third imposes a natural communicability structure on the rate matrices $\{Q(\gamma)\}$; and the fourth condition says that the empirical measure $L_t$ eventually charges all points $S$, a.s. 
See Remark \ref{rem:whenhold} for a discussion on these conditions.

Next we briefly discuss the proof techniques.  As noted earlier in the Introduction, the main tool used in our proofs is the variational representation for functionals of PRM obtained in \cite{budhiraja2011variational} (see also \cite[Chapter 8]{buddupbook}). This variational representation is well suited to establish the equivalent form of the large deviation principle given in terms of certain Laplace asymptotic formulas (see \cite[Corollary 1.10]{buddupbook}). Specifically, the variational formula for PRM gives certain stochastic control representations for the quantities of interest in the Laplace asymptotics (namely the left sides of \eqref{eq:Laplaceupp} and \eqref{eq:Laplacelow}). With these stochastic control representations the main step in the proof is establishing convergence of the value functions of these stochastic control problems to the optimal value of certain deterministic optimization problems associated with the rate function. One important insight is obtained by considering the scaling $t \mapsto e^t$, which, together with a time reversal step leads to dynamics and cost in the stochastic control problems that roughly resemble analogous terms in the limiting deterministic control problem (e.g. compare the evolution of the scaled and time reversed dynamics in \eqref{eq:302} with the identity in 
Property \ref{proper:1}(b) and the cost on the right side of \eqref{eq:348} with the cost in \eqref{eq:JTheta}). This scaling and time reversal plays an important role in the proof.  The proof is completed in two parts. 
Section \ref{sec:lapupp} proves the upper bound, namely the inequality in \eqref{eq:Laplaceupp}, while the lower bound proof (i.e. the proof of \eqref{eq:Laplacelow}), which is technically the most demanding part of the work, is completed in Sections \ref{sec:laplow} and \ref{sec:construction}.  The upper bound relies on tightness and weak limit point characterizations of certain sequences of random measures constructed from the controlled state processes and control processes. For this inequality it is convenient to consider a different representation of the rate function than the one discussed earlier in the Introduction and which is more directly connected with the weak limits of the 
costs expressed as integrals with respect to the above random measures.
The lower bound requires construction of certain feedback controls that are asymptotically near optimal for the variational problem associated with the first representation of the rate function, namely, the one that appears in \eqref{eq:Alternative}. This construction requires ensuring that the feedback controls have certain continuity and nondegeneracy properties which are achieved by a series of approximations that are carried out in Sections \ref{sec:nondeg} and \ref{sec:ctycont}. Section \ref{sec:construction} then completes the proof of the lower bound by providing a construction of the feedback controls and proving the convergence of controls, state processes, and costs to the desired limits. Section \ref{sec:cptlev} show that our proposed rate function is indeed a rate function, namely, it has compact sublevel sets. Finally Section \ref{sec:examples} contains some discussion and illustrative examples.

\subsection{Notation}
 Some standard notation that we use is as follows.  For some space $\X$ and a function $h: \X \to \R$, we will denote, $\sup_{x \in \X} |h(x)| =: \|h\|_{\infty}$. The space of probability measures  on a Polish space $\X$ will be denoted as $\clp(\X)$  and will be equipped with the topology of weak convergence. For Polish spaces $\X_1, \X_2, \ldots, \X_k$ and  a $\gamma \in \clp(\X_1\times \X_2 \times \cdots \X_k)$, we define, for $i=1,2, \ldots, k$, $[\gamma]_i \in \clp(\X_i)$ as $[\gamma]_i(\cdot) = \gamma(\X_1 \times \ldots \times \X_{i-1} \times \cdot \times \X_{i+1} \times \ldots \X_k)$.  For $1\le i< j \le k$, $[\gamma]_{ij} \in \clp(\X_i\times \X_j)$ is defined similarly and analogous notation will be used for measures on product of more than two spaces obtained from $\gamma$ by integrating over suitable coordinates.
 The space of Borel measurable maps from topological space $\X_1$ to $\X_2$ will be denoted as $\B(\X_1, \X_2)$. When $\X_2 = \R$, we may sometimes simply write $\B(\X_1)$.
 For a Polish space $\X$, $\clc(\R_+\!:\!\X)$ will denote the space of continuous functions from $\R_+$ to $\X$, equipped with the topology of uniform convergence on compacts. We will denote by  $C_b(\X)$  the space of real continuous and bounded functions on $\X$. For a function $f:\R_+ \to \X$ (resp. $f:\R_+\times \Y \to \X$), we will use the notation $f_s$ (resp. $f_s(y)$) and $f(s)$ (resp. $f(s,y)$) interchangeably to denote the evaluation of $f$ on $s \in \R_+$ (resp. $(s,y) \in \R_+\times \Y)$.
 
\section{Model  and Notation}\label{sec:2}
We will consider the following family of self-interacting Markov jump processes.
Let $S = \set{1,2,\cdots, \dd}$ be the finite set of states of the process.
Recall that $\clp(S)$ is the space of probability measures on $S$ which will be identified with the $\dd-1$ dimensional simplex in $\mathbb{R}_+^{\dd}$, namely
$$\clp(S) =\set{x \in \mathbb{R}_+^d \, | \, x \cdot \mathbf{1} = 1},$$ 
equipped with the $L^1$-distance: for $x,y \in \clp(S)$, $\|x-y\| \doteq \sum_{i=1}^d|x_i-y_1|$.
Throughout, we will denote $\{(i,j) \in S \times S: i \neq j\} = \sos$.
 Let $\clk_0(S)$ be the collection of rate matrices on $S$, namely the space of $\dd\times \dd$ matrices $\bfq = (q_{ij})_{i,j \in S}$ satisfying
$q_{i,j}\ge 0$ for $(i,j) \in \sos$ and $q_{ii} = - \sum_{j:j\neq i} q_{ij}$, $i \in S$. We denote by $\clk(S)$ the subset of $\clk_0(S)$
consisting of rate matrices that are
irreducible , i.e., for any $i,j \in S$, there exist $m \in \N$ and distinct $i_1, \ldots, i_m \in S$ with $q_{i_k, i_{k+1}} > 0$ for all $k = 1, \ldots, m-1$, $q_{i, i_1}>0$ and $q_{i_m,j}>0$.

Let $Q: \clp(S) \to \clk_0(S)$ be a continuous map. From the compactness of $\clp(S)$ and continuity of $Q$ it follows that 
\begin{equation}\label{eq:cqdefn}
\sup_{\gamma \in \clp(S)} \max_{(i,j) \in \sos} Q_{ij}(\gamma) = c_{Q} <\infty.\end{equation}
We will occasionally suppress $Q$ and write $c_Q$ as $c$.

We are interested in an $S$ valued self-interacting Markov jump process, $\{X_t\}_{t\ge 0}$, starting with some state $x\in S$, which is uniquely characterized in law by the following properties: For all  $f: S\to \R$
$$f(X_t) - f(x) - \int_0^t \cll_{L_s} f(X_s) ds, t \ge 0,$$
is a $\clf_t = \sigma\{X_s: 0 \le s \le t\}$ martingale, where
\begin{equation}
        \label{eq:EmpOccMeas}
        L_t = \frac{1}{t} \int_0^t \delta_{X_s}  ds \, , t > 0, \; L_0 = \delta_x \, ,
    \end{equation}
and for $\gamma \in \clp(S)$, 
\begin{equation}
        \label{eq:Generator}
        \mathcal{L}_{\gamma} f(i) = \sum_{j: j \neq i} Q_{ij}(\gamma) (f(j) - f(i)) \, , i \in S.
    \end{equation}

Such a process can be constructed pathwise as follows. Consider a probability space
$(\Om, \clf, \PP)$ on which are given $ a\doteq d(d-1)$ mutually independent Poisson random measures (PRM) $\{N_{ij}, (i,j) \in \sos\}$ with point space as $\R_+\times [0,c] \times \R_+$ with intensity given as the Lebesgue measure $\la(ds\, du\, dr) = ds\, du\, dr$.

Define  an $S \times \clp(S)$ valued stochastic process $(X_t, L_t)_{t\ge 0}$ by the following system of equations.
\begin{equation}
        \label{eq:RepresentationSIJump}
        X_t = x + \sum_{(i,j) \in \sos} \int_{[0,t] \times [0,c]\times \R_+} (j-i) 1_{[X_{s-}=i]} 1_{[0, Q_{ij}(L_{s-})]} (u) 1_{[0,1]}(r) N_{ij}(ds\, du\, dr) \, ,
    \end{equation}
    with $\{L_t\}$ defined by  \eqref{eq:EmpOccMeas}.

    Together with the above processes, we will be interested in the 
    $\R_+^{a}$-valued empirical flux process $R_t = (R^{ij}_t)_{(i,j) \in A}$, where $R^{ij}_t$ gives the (time averaged) number of transitions from state $i$ to state $j$ during the time interval $[0,t]$. This process can be described using the above collection of PRM as follows: For $(i,j) \in A$,
   \begin{equation}
        \label{eq:flux}
        R_t^{ij} =  \frac{1}{t}\int_{[0,t] \times [0,c]\times \R_+}  1_{[X_{s-}=i]} 1_{[0, Q_{ij}(L_{s-})]} (u) 1_{[0,1]}(r) N_{ij}(ds\, du\, dr). 
    \end{equation}
    We note that the process $(X_t, L_t, R_t)$ can be constructed recursively from one jump to next by a standard procedure and this process satisfies the martingale property noted above.  We also remark that one can give a  construction of these processes 
   using simpler PRM (e.g. those defined on $\R_+\times [0,c]$) or by a collection of time changed standard Poisson processes, however the above construction is useful when studying the large deviation behavior as will be seen below.

\section{Main Result}\label{sec:mainres}
The goal of the work is to establish an LDP for the collection $\{(L_t, R_t)\}$ in $\clp(S) \times \R_+^a$, as $t \to \infty$, with an appropriate rate function $I: \clp(S) \times \R_+^a \to [0,\infty]$. We recall that such a map is called a rate function if it has compact sublevel sets, i.e. for every $m<\infty$, $\{(\gamma, \flu) \in \clp(S)\times \R_+^a: I(\gamma, \flu) \le m\}$ is a compact subset of $\clp(S)\times \R_+^a$.  

We now introduce the rate function $I$ that will govern the large deviation behavior of $\{(L_t, R_t)\}$ as $t\to \infty$.
The fact that $I$ has compact sublevel sets will be shown in Section \ref{sec:cptlev}.

 Given $\gamma \in \clp(S)$ and $\flu \in \R_+^a$, let $\theta \in \B(\R_+, \clp(S\times \R^a_+\times [0,c]))$ be such that
the following properties hold.
\begin{property}\label{proper:1}
$\,$
\begin{enumerate}[(a)]
\item We have
$\gamma = \int_{0}^{\infty} e^{-s} [\theta(s)]_1 ds$.

\item Let
$$M(t) \doteq e^t \int_{t}^{\infty} e^{-s} [\theta(s)]_1 ds, \; t \ge 0.$$
Then
$$
\int_{S\times \R^a_+\times [0,c]} \sum_{y: y \neq x}  1_{[0, Q_{xy}(M_s)]}(u) r_{xy} \theta(s)(dx\, dr\, du) <\infty, \mbox{ for a.e. } s \ge 0
$$
and
for all $f\in \B(S,\R)$,
\be
\int_{S\times \R^a_+\times [0,c]} \sum_{y: y \neq x} (f(y) - f(x)) 1_{[0, Q_{xy}(M_s)]}(u) r_{xy} \theta(s)(dx\, dr\, du) = 0, \mbox{ for a.e. } s \ge 0.
\ee
\item For a.e. $s\ge 0$, $[\theta(s)]_{13}(dx\, du) = [\theta(s)]_{1}(dx) c^{-1}du$.

 \item Writing $\flu = (\flu^{xy})_{(x,y) \in A}$, for $(x,y) \in A$,
\be
c\int_{\R^a_+\times [0,c]\times \R_+} e^{-s} [\theta(s)]_1(x) 1_{[0, Q_{xy}(M_s)]}(u) r_{xy} \eta(s)(dr\, du\mid x) ds= \flu^{xy},
\ee
where $M_s$ is as in part (b) and $\theta(s)(dx\, dr\, du) = \eta(s)(dr\, du\mid x) [\theta(s)]_{1}(dx)$.

\end{enumerate}
\end{property}
 We denote the collection of all such $\theta$ as $\cls_{\gamma, \flu}$, namely,
\be
\cls_{\gamma, \flu} = \{\theta \in \B(\R_+, \clp(S\times \R^a_+\times [0,c])): \mbox{Property \ref{proper:1} holds} \}.
\ee

For $\theta \in \cls_{\gamma,\flu}$, the $M \in \clc(\R_+: \clp(S))$ 
defined in part (b) of Condition \ref{proper:1} will be denoted as
 $M^{\theta}$.
Let $\Xi \doteq S \times \mathbb{R}_+^a \times [0,c]\times \mathbb{R}_+$.
For $\theta \in \B(\R_+, \clp(S\times \R^a_+\times [0,c]))$, let
\be
\label{eq:JTheta}
J(\theta) = c\int_{\Xi} e^{-s} \sum_{y:y\neq x}1_{[0, Q_{xy}(M_s)]}(u) \ell(r_{xy})\,  \theta(s)(dx\, dr\, du) ds \, ,
\ee
where
\begin{equation}
    \label{eq:PoissonFunction}
    \ell(x) = x \log x - x + 1, \, \, x \geq 0 \, .
\end{equation}
 Then the rate function is defined as
\be
\label{eq:IGamma}
I(\gamma,\flu) = \inf_{\theta \in \cls_{\gamma,\flu}} J(\theta), \;\; \gamma \in \clp(S), \flu \in \R_+^a.
\ee
See also a different representation for the rate function in \eqref{eq:Alternative}.

We can now state the main result of this work. For the upper bound we do not assume any additional conditions beyond the continuity of the map $Q$ assumed previously. For the lower bound we will need the following additional assumption. We denote by $\clp_+(S)$ the collection of $m \in \clp(S)$ such that $m(x)>0$ for all $x \in S$.
\begin{assumption}\label{assu:lowbd}
\begin{enumerate}[(a)] $\,$
\item There is a $\pi^* \in \clp_+(S)$ such that $\pi^*Q(\pi^*)=0$.
\item For all $\kappa \in [0,1]$ and $m_1, m_2 \in \clp(S)$, $Q(\kappa m_1 + (1-\kappa)m_2) = \kappa Q(m_1) + (1-\kappa) Q(m_2)$.
\item
There is an $\cla^* \in \clk(S)$ such that, with $A_0 = \{(x,y) \in A: \cla^*(x,y) >0\}$, $Q_{ij}(m) = 0$ for all $m \in \clp(S)$ and $(i,j) \in A\setminus A_0$. Furthermore, for some $k_Q \in (0,\infty)$, and all $(i,j) \in A_0$ and
$m \in \clp(S)$,
$$Q_{ij}(m) \ge k_Q (\inf_{x\in S} m(x)).$$
\item For each $x \in S$
$$\PP(\om \in \Om: \mbox{ for some } t> 0, L_t(\om)\{x\}>0 ) = 1.$$
\end{enumerate}
\end{assumption}
\begin{remark}\label{rem:whenhold}
The affine property in part (b) is satisfied in many settings and we refer the reader to Section \ref {sec:examples} for some illustrative examples. The first statement in part (c) in the assumption is a natural communication condition which specifies the permissible transitions in the model, e.g. the jump processes of interest may be nearest neighbor walks so that only transitions to the immediate neighbors are allowed.
If part (b) of the assumption holds, the second part of part (c) will hold as long as there is some $m \in \clp(S)$ such that $Q_{ij}(m)>0$ for all $(i,j) \in A_0$.  This follows from noting that, for $p \in \clp(S)$,
\begin{multline} \inf_{(i,j) \in A_0} Q_{ij}(p) = \inf_{(i,j) \in A_0} \sum_{x \in S} p_x Q_{ij}(\delta_x)
\ge \inf_{z \in S} p_z \inf_{(i,j) \in A_0} \sum_{x \in S}  Q_{ij}(\delta_x)\\
\ge \inf_{z \in S} p_z \inf_{(i,j) \in A_0} \sum_{x \in S} m_x Q_{ij}(\delta_x)=
\inf_{z \in S} p_z \inf_{(i,j) \in A_0}  Q_{ij}(m)
= k_Q \inf_{z \in S} p_z,
\end{multline}
where $k_Q = \inf_{(i,j) \in A_0}  Q_{ij}(m)>0$.
Suppose that we have the stronger property that $Q_{ij}(m)>0$ for all $m \in \clp(S)$ and all $(i,j) \in A_0$. In this case we have that (a) holds as well. This can be seen as follows. Define $G:\clp(S) \to \clp(S)$ as, for $m\in \clp(S)$, $G(m) = \gamma$, where $\gamma$ is the unique element in $\clp(S)$ such that $\gamma Q(m) =0$. From the above irreducibility and affine assumptions we see that the map $G: \clp(S) \to \clp(S)$ is continuous. Thus by Brouwer's fixed point theorem, $G$ has a fixed point $\pi^* \in \clp(S)$. This $\pi^*$ must satisfy $\pi^*Q(\pi^*)=0$. Since $Q(\pi^*)$ is irreducible, it then follows that $\pi^* \in \clp_+(S)$, showing that (a) holds.
In this setting (and assuming that (b) holds), an argument as in \cite[Lemma A.2]{Budhiraja2025} shows that part (d) of the assumption holds as well.  
\end{remark}

\begin{theorem}\label{thm:main}
The function $I$ is a rate function, namely it has compact sublevel sets.
As $t \to \infty$, the collection $\{(L_t, R_t), t>0\}$ satisfies the LDP upper bound with speed $t$ and rate function $I$, namely for all closed sets $F \in \clp(S) \times \R_+^a$,
$$\limsup_{t\to \infty} \frac{1}{t} \log \PP\{(L_t, R_t) \in F\} \le -\inf_{z\in F} I(z).$$
Suppose in addition that Assumption \ref{assu:lowbd} is satisfied. Then, as $t \to \infty$, the collection $\{(L_t, R_t), t>0\}$ also satisfies the LDP lower bound with speed $t$ and rate function $I$, namely for all open sets $G \in \clp(S) \times \R_+^a$,
$$\liminf_{t\to \infty} \frac{1}{t} \log \PP\{(L_t, R_t) \in G\} \ge -\inf_{z\in F} I(z).$$
\end{theorem}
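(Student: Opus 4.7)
The plan is to prove all three claims via the equivalent Laplace principle. Once the compactness of sublevel sets of $I$ is established as in Section \ref{sec:cptlev}, the standard equivalence of the LDP and the Laplace principle for rate functions (see e.g.\ \cite[Corollary 1.10]{buddupbook}) reduces the theorem to the two Laplace asymptotics \eqref{eq:Laplaceupp}--\eqref{eq:Laplacelow} governing $-\tfrac{1}{t}\log \E e^{-t F(L_t, R_t)}$ for bounded continuous $F$. The starting point in both directions is the Budhiraja--Dupuis variational formula from \cite{budhiraja2011variational} applied to the PRMs $\{N_{ij}\}$, which recasts this log-expectation as the infimum over intensity controls $\varphi$ of an expected cost of the form $\E\big[\tfrac{1}{t} \int_0^t \int \ell(\varphi)\,dr\,du\,ds + F(L_t^\varphi, R_t^\varphi)\big]$, where $(L_t^\varphi, R_t^\varphi)$ is the controlled analogue of \eqref{eq:EmpOccMeas}--\eqref{eq:flux} driven by PRMs with perturbed intensities. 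A pivotal structural step, highlighted in the introduction, is the exponential time change $t \mapsto e^t$ together with a time reversal: this converts the $O(1/t)$ drift of $\{L_t\}$ into $O(1)$ updates weighted by $e^{-s}$, matching the exponential discount in \eqref{eq:JTheta}, and turns the identity for $M$ in Property \ref{proper:1}(b) into the natural infinitesimal invariance relation for the reversed empirical law.

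For the Laplace inequality corresponding to the LDP upper bound (Section \ref{sec:lapupp}), given a near-minimizing sequence of controls I would encode the rescaled controlled state and control into random occupation measures on $\R_+ \times \Xi$, establish tightness using that the $\ell$-cost is bounded by a constant depending on $\|F\|_\infty$, and extract a weak limit point $\theta$. One then verifies that $\theta \in \cls_{\gamma^*, \flu^*}$ for some limit values: parts (a) and (d) of Property \ref{proper:1} follow from passing to the limit in the rescaled \eqref{eq:EmpOccMeas}--\eqref{eq:flux}; part (c) from the uniform marginal on $[0,c]$ of the thinning coordinate under the reference Poisson intensity; and part (b) from the controlled martingale relation for $X$. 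Lower semicontinuity of $\ell$ together with Fatou then yields $\liminf \text{cost} \ge J(\theta) + F(\gamma^*, \flu^*) \ge \inf\{F + I\}$, which is the desired bound; this argument is cleaner when working with the alternative representation \eqref{eq:Alternative} of $I$ alluded to in the introduction, rather than directly with \eqref{eq:IGamma}. No condition beyond continuity of $Q$ is used here.

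The main obstacle is the matching Laplace lower asymptotics (Sections \ref{sec:laplow}, \ref{sec:construction}), where Assumption \ref{assu:lowbd} enters. The goal is, for each near-minimizer $\theta \in \cls_{\gamma,\flu}$ of $I(\gamma,\flu)$, to manufacture a sequence of feedback intensities $\varphi_t$ whose controlled triples $(L_t^{\varphi_t}, R_t^{\varphi_t}, \text{cost})$ converge to $(\gamma, \flu, J(\theta))$. Plugging an arbitrary $\theta$ directly into a feedback prescription need not yield a well-posed controlled jump SDE, nor ensure that the state occupies the right cells of $\Xi$ on the exponential time scale; this forces the preliminary regularization steps carried out in Sections \ref{sec:nondeg} and \ref{sec:ctycont}. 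Assumption \ref{assu:lowbd}(a)--(c) is used there to replace $\theta$ by an approximant with strictly positive marginals and continuous, bounded-below jump rates on $A_0$ (via the affine property and the lower bound in part (c), together with the interior fixed point from part (a)), while part (d) ensures that the empirical flux in \eqref{eq:flux} asymptotically realizes the target $\flu$. Section \ref{sec:construction} then carries out the feedback construction and the stability analysis for the time-reversed rescaled controlled dynamics, with convergence of the control cost resting on matching the Radon--Nikodym derivative of the perturbed PRM intensity against $\ell$. The most delicate point is the limiting verification of Property \ref{proper:1}(b), since it fixes a nontrivial coupling between the chosen jump intensities and the limiting measure $M^\theta$; it is this balance that keeps the approximating controls from drifting off the constraint set $\cls_{\gamma,\flu}$ in the limit.
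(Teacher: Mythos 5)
Your plan follows essentially the same route as the paper: reduction to Laplace asymptotics, the PRM variational representation combined with the $t\mapsto e^t$ rescaling and time reversal, tightness and limit-point characterization of control occupation measures for the upper bound, and regularized near-optimal feedback controls under Assumption \ref{assu:lowbd} for the lower bound. Two small corrections of emphasis: the paper runs the upper bound through \eqref{eq:IGamma} precisely because the weak limit of the occupation measures is naturally a $\theta$ with cost $J(\theta)$ (the representation \eqref{eq:Alternative} is reserved for the lower bound, with the two linked by Lemma \ref{lem:ratesame}), and Assumption \ref{assu:lowbd}(d) is used not to realize the target flux but to guarantee that $L_t$ charges every state after an a.s.\ finite time, which is what makes the nondegenerate feedback phase of Construction \ref{cons:cont} (division by $Q_{ij}(\tilde L^n)$, bounded below via part (c)) admissible at negligible cost.
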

\begin{proof}
The LDP is completed in two steps.  The upper bound is established in Section \ref{sec:lapupp} and the complementary lower bound is shown in Sections \ref{sec:laplow} and \ref{sec:construction}. The fact that $I$ is a rate function is shown in Section \ref{sec:cptlev}. 
\end{proof}
We remark that Theorem \ref{thm:main} immediately yields, through the contraction principle,  an LDP for the $\R^{d(d-1)}$ valued {\em empirical current} process $\{J_t\}_{t\ge 0}$, where $J_t = (J_t^{xy}, x,y \in S, x\neq y)$ and $J_t^{xy} = R_t^{xy}- R_t^{yx}$. We omit the details.

\section{Laplace Asymptotics and  Variational Representation}
\label{sec:varrep}
For proving the large deviation bounds, it will be convenient to consider the following reparametrization. Let $t_n \to \infty$ be an arbitrary sequence and let $T_n= e^{t_n}$. It then suffices to show the bounds in Theorem \ref{thm:main} with $t$ (resp. $t\to \infty$) replaced by $T_n$ (resp. $n\to \infty$).
Further, from equivalence between an LDP and Laplace principle (see e.g. \cite[Theorems 1.5 and 1.8]{buddupbook}), it suffices, for the large deviation upper bound, to show  for all $h \in C_b(\clp(S)\times \R_+^a)$,
the  following Laplace upper bound 
\begin{equation}
        \label{eq:Laplaceupp}
         \liminf_{n \rightarrow \infty} -\frac{1}{T_n} \log \mathbb{E} \left[ e^{- T_n h(L_{T_n}, R_{T_n})} \right] \ge \inf_{(\gamma, \flu) \in \mathcal{P}(S) \times \R_+^a} \left[ h(\gamma,\flu) + I(\gamma,\flu) \right].
    \end{equation}
    This will be proved in Section \ref{sec:lapupp}.
    Similarly, for the large deviation lower bound, it suffices to show  that, for all $h \in C_b(\clp(S)\times \R_+^a)$,
    \begin{equation}
        \label{eq:Laplacelow}
         \limsup_{n \rightarrow \infty} -\frac{1}{T_n} \log \mathbb{E} \left[ e^{- T_n h(L_{T_n}, R_{T_n})} \right] \le \inf_{(\gamma, \flu) \in \mathcal{P}(S) \times \R_+^a} \left[ h(\gamma,\flu) + I(\gamma,\flu) \right].
    \end{equation}
This Laplace lower bound will be shown in Sections \ref{sec:laplow} and \ref{sec:construction}.
    
    The starting point of our proof will be the following variational formula. 
    Let $\clp_2$ be the space of predictable maps $\alpha: \R_+\times [0,c] \times \Omega \to \R_+$. Namely $\alpha$ is a $(\B([0,c])\otimes \clt_{pr}) / \B(\R_+)$ measurable, where $\clt_{pr}$ is the sub $\sigma$-field of $\B(\R_+)\otimes \clf$ generated by real $\{\clf_t\}$ predictable processes, where $\clf_t = \sigma\{N_{ij}([0,s]\times B): 0 \le s \le t, B \in \B([0,c]\times \R_+), (i,j) \in \sos\}$. We will denote by $\clp_{2,b}$ the collection of $\al \in \clp_2$ such that  there is an $m \in (0,\infty)$ (depending on $\al$) such that $\al(t,u,\om) \in [m^{-1}, m]$ for all $(t,u) \in \R_+\times [0,c]$, for a.e.\ $\om$,.
    
    We will denote by $\Theta^n$ the collection of all $\alpha^n= (\alpha^n_{ij})_{(i,j) \in A}$ such that for each $(i,j) \in A$, $\alpha^n_{ij}: [0,T_n]\times [0,c] \times \Om \to \R_+$ is a restriction of a predictable map in $\clp_{2,b}$ to the time interval $[0,T_n]$. 
    
    For $\alpha^n \in \Theta^n$, define $\{(\tilde X^n_t, \tilde L^n_t), t \le T_n\}$ by the controlled analogues of \eqref{eq:EmpOccMeas}-\eqref{eq:RepresentationSIJump}:
    \begin{equation}
        \label{eq:ControlProcess}
        \tilde{X}^n_t = x + \sum_{(i,j) \in \sos} \int_{[0,t] \times [0,c] \times \mathbb{R}_+} (j-i) 1_{[\tilde{X}^n_{s-}=i]} 1_{[0, Q_{ij}(\tilde{L}^n_{s-})]}(u) 1_{[0, \alpha^n_{ij}(s,u)]}(r) N_{ij}(ds\, du\, dr),\; 0 \le t \le T_n
    \end{equation}
    where $N_{ij}(ds\, du\, dr)$ are defined as before (see below  \eqref{eq:Generator}), and
    \begin{equation}
            \label{eq:ContrEmpOccMeas}
            \tilde{L}^n_t = \frac{1}{t} \int_0^t \delta_{\tilde{X}^n_s} \, ds, \; 0< t \le T_n, \; \tilde L^n_0 = \delta_x.
        \end{equation}
       Let $\tilde R^n = (\tilde R^{n, ij})_{(i,j) \in A}$ {the controlled analogue of \eqref{eq:flux}}, where
       \begin{equation}
        \label{eq:ControlFlux}
        \tilde{R}^{n,ij} =  \frac{1}{T_n}\int_{[0,T_n] \times [0,c] \times \mathbb{R}_+}  1_{[\tilde{X}^n_{s-}=i]} 1_{[0, Q_{ij}(\tilde{L}^n_{s-})]}(u) 1_{[0, \alpha^n_{ij}(s,u)]}(r) N_{ij}(ds\, du\, dr),\;  (i,j) \in A.
    \end{equation}  
        The dependence of $(\tilde X^n_t, \tilde L^n_t, \tilde R^n)$ on the control process $\alpha^n$ is suppressed in the notation.

        Then we have (see \cite[Theorem 8.12]{buddupbook}),
       \begin{equation}
        \label{eq:RepresentationFormula}
        - \frac{1}{T_n} \log \mathbb{E} \left[ e^{- T_n h({L}_{T_n}, {R}_{T_n})} \right] = \inf_{\substack{\alpha^{n} \in \Theta^n}} \mathbb{E} \left[ h(\tilde{L}^n_{T_n}, \tilde{R}^n) + \sum_{(i,j)\in \sos} \frac{1}{T_n} \int_{[0,T_n] \times [0,c]} \ell(\alpha^{n}_{ij}(s,u)) \, dsdu \right] \, .
    \end{equation}

    \subsection{Rescaling and time-reversed    representation}
    For the weak convergence arguments that we will need, it will be convenient to consider the following rescaling and time reversal of the various controls and control processes.
    
Let, for $t\in \R$, 
\begin{equation}
        \label{eq:RescaledControlEmpOccMeas}
         \tilde{L}_{t}^{\text{sc},n} \doteq \tilde{L}^n_{e^{t}} , \;\; 
         \tilde{X}_{t}^{\text{sc},n}\doteq \tilde{X}^n_{e^{t}}.
\end{equation}
    Then, recalling that $T_n = e^{t_n}$, the right side of \eqref{eq:RepresentationFormula} can be rewritten as
\begin{equation}
            \label{eq:RescaledRepresentationFormula}
            \inf_{\substack{\alpha^{n} \in \Theta^n}} \mathbb{E} \left[ h(\tilde{L}^{\text{sc},n}_{t_n}, \tilde{R}^{n}) + \sum_{(i,j) \in A} e^{-t_n} \int_{[0,e^{t_n}] \times [0,c]} \ell(\alpha^{n}_{ij}(s,u)) \, dsdu \right].
        \end{equation}
        Using a change of variable in the time integral, $s = e^v$, we can write this as
        \begin{equation}
            \label{eq:RescaledChangeRepresentationFormula}
             \inf_{\substack{\alpha^{n} \in \Theta^n}} \mathbb{E} \left[ h(\tilde{L}^{\text{sc},n}_{t_n}, \tilde{R}^{n}) + \sum_{(i,j) \in A} e^{-t_n} 
             \int_{
             (-\infty,t_n] \times [0,c]} e^v \ell(\alpha^{\text{sc},n}_{ij}(v,u)) \, dv\,du \right] \, ,
        \end{equation}
       where 
       \begin{equation}
           \label{eq:ControllerRes}
            \alpha_{ij}^{\text{sc},n}(v,u) \doteq \alpha^n_{ij}(e^v,u), \, \, (v,u) \in (-\infty, t_n] \times [0,c].
       \end{equation}
         Denote for $0 \le t \le t_n$, 
        \begin{equation}
            \label{eq:RevResContEmpOccMeas} \tilde{L}^{\text{sc},n}_{t} \eqqcolon \tilde{M}^{\text{sc},n}_{t_n - t}
        \end{equation}    
        Then letting $w = t_n - v$ in the  integral in \eqref{eq:RescaledChangeRepresentationFormula}, we obtain the following  time-reversed and rescaled representation formula  from \eqref{eq:RepresentationFormula}.
        \begin{equation}
            \label{eq:RescaledChangeReverseRepresentationFormula}
        -\frac{1}{T_n} \log \mathbb{E} \left[ e^{- T_n h(L_{T_n}, R_{T_n})} \right] =     \inf_{\substack{\alpha^{n} \in \Theta^n}} \mathbb{E} \left[ h(\tilde{M}^{\text{sc},n}_0, \tilde{R}^{n}) + \sum_{(i,j) \in A} \int_{[0,\infty) \times [0,c]} e^{-w} \ell(\hat{\alpha}_{ij}^{\text{sc},n}(w,u)) \, dw du \right] \, ,
        \end{equation}
        where  
        \begin{equation}
            \label{eq:ControllerResInv}
            \hat{\alpha}_{ij}^{\text{sc},n}(w,u) \doteq \alpha_{ij}^{\text{sc},n}(t_n - w,u), \, \, w \in [0,\infty).
        \end{equation} 
         In the following, we will work with the representation formula \eqref{eq:RescaledChangeReverseRepresentationFormula}.

    \subsection{Dynamics of the time-reversed rescaled controlled empirical occupation measure}
    We now write the evolution for the scaled time-reversed process 
    $\{\tilde{M}^{\text{sc},n}_t, t >0\}$.
Note from \eqref{eq:ContrEmpOccMeas} that, for a.e.  $t >0$
\begin{equation}
            \label{eq:TimeEvolEmpOccMeas}
            \frac{d \tilde L^n_t}{dt} =  \frac{1}{t} (-\tilde L^n_t + \delta_{\tilde X^n_t}).
        \end{equation}
        Recalling the definition of $\tilde{L}^{\text{sc},n}_{t}$ in \eqref{eq:RescaledControlEmpOccMeas}, we now see that for a.e. $t \in \R$
        \begin{equation}
            \label{eq:TimeEvolRescContrEmpOccMeas}
            \frac{d \tilde{L}^{\text{sc},n}_t}{dt} = - \tilde{L}^{\text{sc},n}_t + \delta_{\tilde{X}^{\text{sc},n}_t} \, .
        \end{equation}
        In particular, for $t >0$,
        $$
        \tilde{L}^{\text{sc},n}_{t_n} - \tilde{L}^{\text{sc},n}_{t_n-t}
        = - \int_{t_n-t}^{t_n} \tilde{L}^{\text{sc},n}_s ds + 
        \int_{t_n-t}^{t_n} \delta_{\tilde{X}^{\text{sc},n}_s} ds.
        $$
        By reversing time, recalling the definition of $\tilde{M}^{\text{sc},n}_t$ in \eqref{eq:RevResContEmpOccMeas} and defining
        \be \label{eq:Ytilde}\tilde{Y}^{\text{sc},n}_{s}\doteq \tilde{X}^{\text{sc},n}_{t_n-s} \, 0\le s\le t, \ee we have
        $$\tilde{M}^{\text{sc},n}_{0} - \tilde{M}^{\text{sc},n}_{t}
        = - \int_{0}^{t} \tilde{M}^{\text{sc},n}_s ds + 
        \int_{0}^{t} \delta_{\tilde{Y}^{\text{sc},n}_s} ds,
        $$ 
namely,
\begin{equation} \label{eq:302}
 \tilde{M}^{\text{sc},n}_{t}
        = \tilde{M}^{\text{sc},n}_{0} + \int_{0}^{t} \tilde{M}^{\text{sc},n}_s ds - 
        \int_{0}^{t} \delta_{\tilde{Y}^{\text{sc},n}_s} ds, \;  0 \le t \le t_n.
\end{equation}
    The above evolution equation will be a key ingredient in our analysis.

    \section{Laplace upper bound}
    \label{sec:lapupp}
    In this section we will complete the proof of the upper bound in \eqref{eq:Laplaceupp}.
    Fix $\delta \in (0,1)$. Then, using \eqref{eq:RescaledChangeReverseRepresentationFormula}
    we can find, for each $n \in \N$, $\alpha^n \in \Theta^n$ such that
    \be\label{eq:delbd}
    -\frac{1}{T_n} \log \mathbb{E} \left[ e^{- T_n h(L_{T_n},R_{T_n} )} \right] + \delta
    \ge \mathbb{E} \left[ h(\tilde{M}^{\text{sc},n}_0, \tilde{R}^{n}) + \sum_{(i,j) \in A} \int_{[0,\infty) \times [0,c]} e^{-s} \ell(\hat{\alpha}_{ij}^{\text{sc},n}(s,u)) \, ds du \right]
\ee
where $\tilde{M}^{\text{sc},n}_s$, $\tilde{R}^{n}$, and $\hat{\alpha}_{ij}^{\text{sc},n}(s,u)$ as in \eqref{eq:RevResContEmpOccMeas}, \eqref{eq:ControlFlux}, and \eqref{eq:ControllerResInv}, respectively, are associated with the controls $\alpha^n$.

We thus have that
\be\label{eq:costbd}
\sup_{n \in \N} \mathbb{E} \left[\sum_{(i,j) \in A} \int_{[0,\infty) \times [0,c]} e^{-s} \ell(\hat{\alpha}_{ij}^{\text{sc},n}(s,u)) \, ds du \right] \le 2(\|h\|_{\infty} +1) \doteq K_0.
\ee
We now make an important observation.
From the definition of $(\tilde{M}^{\text{sc},n}_s, \tilde{R}^{n})$ (see \eqref{eq:ControlProcess} and \eqref{eq:ControlFlux}), we can replace $\hat{\alpha}_{ij}^{\text{sc},n}(s,u)$ with 
$$\hat{\alpha}_{ij}^{\text{sc},n}(s,u)1_{[0, Q_{ij}(\tilde{M}^{\text{sc},n}_s)]}(u)1_{\{\tilde{Y}^{\text{sc},n}_s =i\}} + (1- 1_{[0, Q_{ij}(\tilde{M}^{\text{sc},n}_s)]}(u)1_{\{\tilde{Y}^{\text{sc},n}_s =i\}}) $$
since this change does not affect  $(\tilde{M}^{\text{sc},n}, \tilde{R}^{\text{sc},n})$ and does not increase the cost.
Consequently, the right side of \eqref{eq:delbd} can be replaced by
\begin{equation}\label{eq:n1029}
\mathbb{E} \left[ h(\tilde{M}^{\text{sc},n}_0, \tilde{R}^{\text{sc},n}) + \sum_{(i,j) \in A} \int_{[0,\infty) \times [0,c]} e^{-s} 1_{\{\tilde{Y}^{\text{sc},n}_s =i\}}1_{[0, Q_{ij}(\tilde{M}^{\text{sc},n}_s)]}(u)\ell(\hat{\alpha}_{ij}^{\text{sc},n}(s,u)) \, ds du \right] \, .
\end{equation}
The above representation, although slightly more complicated than the right side of \eqref{eq:delbd}, is well suited for capturing the correct asymptotic behavior needed for the upper bound; see proof of Lemma \ref{lem:char}.

Recall $a = \dd(\dd-1)$ and that $\Xi = S \times \R_+^a \times [0,c] \times \R_+$. Let $\clp_1$ be the collection of all $\Gamma \in \clp(\Xi)$ such that 
$$[\Gamma]_{34}(du\,ds) = c^{-1}e^{-s} du ds.$$
Such a $\Gamma$ can be disintegrated as
$$\Gamma(dx\, dr\, du\, ds) = \Gamma_{(12)|(34)}(dx\, dr|u,s) [\Gamma]_{34}(du\, ds) =  \Gamma_{(12)|(34)}(dx\, dr|u,s) c^{-1}e^{-s} du\, ds, $$
    where for  a.e. $(u,s)$, $\Gamma_{(12)|(34)}(\cdot|u,s) \in \clp(S \times \R_+^a)$.
We note that $\clp_1$ is a closed subset of $\clp(\Xi)$.

    We now consider the following sequence $\{\nu^n, n \in \N\}$ of $\clp_1$ valued random variables:
    \be \label{eq:nundefn}
    \nu^n(dx\, dr\, du\, ds) := \delta_{(\tilde{Y}^{\text{sc},n}_s, \hat{\alpha}^{\text{sc},n}(s,u))}(dx\, dr) c^{-1}e^{-s} du\, ds.
    \ee
    The following lemma is an immediate consequence of the cost bound in \eqref{eq:costbd}.
     \begin{lemma}\label{lem:tight}
The sequence $\{(\nu^n,\tilde{M}^{\text{sc},n}, \tilde{R}^{n}),  n \in \N\}$ is a tight collection of $\clp_1\times C(\R_+\!:\!\clp(S))\times \R_+^a$ valued random variables.
    \end{lemma}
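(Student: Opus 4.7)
The plan is to establish tightness of each of the three components $\nu^n$, $\tilde M^{\text{sc},n}$, and $\tilde R^n$ separately; joint tightness then follows. For the continuous $\clp(S)$-valued processes $\tilde M^{\text{sc},n}$, the evolution identity \eqref{eq:302} is decisive. Both $\tilde M^{\text{sc},n}_s$ and $\delta_{\tilde Y^{\text{sc},n}_s}$ have total variation one, so \eqref{eq:302} immediately yields the deterministic Lipschitz estimate
\[
\|\tilde M^{\text{sc},n}_t - \tilde M^{\text{sc},n}_s\| \le 2(t-s), \qquad 0 \le s \le t \le t_n,
\]
together with the pointwise inclusion $\tilde M^{\text{sc},n}_t \in \clp(S)$. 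Since $\clp(S)$ is compact, Arzelà--Ascoli then yields tightness of $\{\tilde M^{\text{sc},n}\}_n$ in $C(\R_+\!:\!\clp(S))$.

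For the $\clp_1$-valued random measures $\nu^n$, I would invoke the standard criterion that tightness of the mean measures $\{\E\nu^n\}$ in $\clp(\Xi)$ implies tightness of the laws of $\nu^n$ in $\clp(\clp(\Xi))$ (via Markov and Prokhorov). Inspecting \eqref{eq:nundefn}, under $\E\nu^n$ the variable $x$ lives in the finite set $S$ and the $(u,s)$-marginal is the fixed measure $c^{-1}e^{-s}\,du\,ds$, so the marginals in $(x,u,s)$ are automatically tight. The only non-trivial piece is the $r$-marginal in $\R_+^a$, which I control via the cost bound \eqref{eq:costbd}. Unpacking the Dirac disintegration of $\nu^n$ gives
\[
\E\int_\Xi \sum_{(i,j)\in A}\ell(r_{ij})\,\nu^n(dx\,dr\,du\,ds) \;=\; c^{-1}\E\int_{[0,\infty)\times[0,c]}e^{-s}\sum_{(i,j)\in A}\ell(\hat\alpha^{\text{sc},n}_{ij}(s,u))\,ds\,du \;\le\; c^{-1}K_0.
\]
Since $r \mapsto \sum_{(i,j)\in A}\ell(r_{ij})$ has compact sublevel sets in $\R_+^a$ by the superlinear growth of $\ell$ at infinity, Markov's inequality gives uniform tightness of the $r$-marginals of $\E\nu^n$. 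Combining the marginal tightness statements yields tightness of $\{\E\nu^n\}$, and hence of $\{\nu^n\}$ in $\clp_1$ (a closed subset of $\clp(\Xi)$).

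For the $\R_+^a$-valued $\tilde R^n$, the PRM compensator formula applied to \eqref{eq:ControlFlux} gives
\[
\E\tilde R^{n,ij} \;=\; \frac{1}{T_n}\E\int_0^{T_n}\int_{[0,c]} 1_{[\tilde X^n_s = i]}\, 1_{[0,Q_{ij}(\tilde L^n_s)]}(u)\,\alpha^n_{ij}(s,u)\,du\,ds.
\]
Using Young's inequality for $\ell$ in the form $x \le (e-1) + \ell(x)$, $x \ge 0$, together with the time change $s = e^{t_n - w}$ that relates $\alpha^n$ to $\hat\alpha^{\text{sc},n}$ (so that $T_n^{-1}\int_0^{T_n}\ell(\alpha^n_{ij}(s,u))\,ds = \int_0^\infty e^{-w}\ell(\hat\alpha^{\text{sc},n}_{ij}(w,u))\,dw$), the cost bound \eqref{eq:costbd} delivers $\sup_n\E\tilde R^{n,ij}<\infty$, and tightness of $\{\tilde R^n\}$ follows by Markov.

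The main technical point is the $r$-marginal control for $\nu^n$, where the superlinearity of $\ell$ combined with the cost bound is the essential mechanism. The remaining steps reduce to Arzelà--Ascoli and the PRM compensator formula, both of which are routine once the variational setup and the cost bound \eqref{eq:costbd} are in hand.
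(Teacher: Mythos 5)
Your proposal is correct and follows essentially the same route as the paper: component-wise tightness, with the deterministic Lipschitz bound from \eqref{eq:302} for $\tilde M^{\text{sc},n}$, the superlinearity of $\ell$ combined with the cost bound \eqref{eq:costbd} to control the $r$-marginal of $\nu^n$, and the compensator formula plus the linear bound $x \le \ell(x) + \text{const}$ and a change of variables for $\tilde R^n$. The only cosmetic difference is that you phrase the $\nu^n$ step via tightness of the mean measures while the paper invokes the tightness-function criterion of \cite[Lemma 2.9, Theorem 2.11]{buddupbook}; these are the same mechanism.
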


    \begin{proof}
We first argue the tightness of $\{\nu^n\}$. Since $S$ is compact and $[\nu^n]_{3,4}(du\, ds) = c^{-1}e^{-s} du\, ds$, it suffices to check that $\{[\nu^n]_{2}\}$ is a tight sequence of $\clp(\R^a_+)$-valued random variables.
Since $x\mapsto \ell(x)$ is superlinear, it is a tightness function on $\R_+$ (cf.\ \cite[Section 2.2]{buddupbook}) and thus to prove the tightness of $\{[\nu^n]_{2}\}$ it suffices to verify
(see \cite[Lemma 2.9, Theorem 2.11]{buddupbook}) that
\be \label{eq:407}
\sup_{n \in \N} \E \sum_{(i,j) \in \sos} \int_{\R_+^a} \ell(r_{ij}) [\nu^n]_{2}(dr) <\infty.\ee
However the last bound is immediate from \eqref{eq:costbd} and on noting that the quantity on the left of the display above equals
$$c^{-1}\sup_{n \in \N} \E \sum_{(i,j) \in \sos} \int_{[0,\infty)\times [0,c]} e^{-s} \ell(\hat{\alpha}_{ij}^{\text{sc},n}(s,u)) ds\, du.$$
The tightness of $\tilde{M}^{\text{sc},n}$ is immediate on noting that $\clp(S)$ is compact and that, from \eqref{eq:302}, for $0\le s \le t<\infty$
$$|\tilde{M}_t^{\text{sc},n}- \tilde{M}_s^{\text{sc},n}| \le 2|t-s|.$$

Finally, we consider $\tilde{R}^{n}$. Note that, for $(x,y) \in A$,
\begin{align*}
\E\tilde{R}^{n, xy} &
= e^{-t_n} \E\int_{[0, e^{t_n}] \times [0,c] \times \R_+} 1_{[\tilde{X}^n_{s-}=x]} 1_{[0, Q_{xy}(\tilde{L}^n_{s-})]}(u) 1_{[0, \alpha^n_{xy}(s,u)]}(r) N_{xy}(ds\, du\, dr)\\
&=  e^{-t_n} \E\int_{[0, e^{t_n}]\times [0,c]} 1_{[\tilde{X}^n_{s}=x]} 1_{[0, Q_{xy}(\tilde{L}^n_{s})]}(u)\alpha^n_{xy}(s,u) ds du 
\end{align*}
By a change of variables, and using superlinearity of $\ell$, for some $c_1 \in (0,\infty)$,
\begin{align*}
\sup_{n \in \N}\E\tilde{R}^{n, xy} &\le 
\sup_{n \in \N}\E\int_{[0,\infty)\times [0,c]} e^{-s} \hat{\alpha}_{xy}^{\text{sc},n}(s,u) ds\, du
\\
&\le c_1  \sup_{n \in \N}\E\int_{[0,\infty)\times [0,c]} e^{-s} [\ell(\hat{\alpha}_{xy}^{\text{sc},n}(s,u)) +1] ds\, du \le c_1  (K_0+c).
\end{align*}
This proves the tightness of $\{\tilde{R}^{n, xy}\}$. The result follows.
    \end{proof}

    We now give an important characterization of the weak limit points of $\nu_n$.

    \begin{lemma}\label{lem:char}
Let  $(\nu^n,\tilde{M}^{\text{sc},n}, \tilde R^n)$ converge weakly along some subsequence to $(\nu^*, M^*, R^*)$. Then a.s., $\nu^*$ can be disintegrated as 
$\nu^*(dx\, dr\, du\, ds) = \theta(s)(dx\, dr\, du) e^{-s} ds$ for some $\theta \in \B(\R_+, \clp(S\times \R_+^a\times [0,c])$.
Furthermore,  $\theta \in \cls_{[\nu^*]_1, R^*}$ and $M^{\theta} = M^*$ a.s.
    \end{lemma}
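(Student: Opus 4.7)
The plan is to invoke Skorohod's representation theorem so that along the subsequence $(\nu^n, \tilde M^{\text{sc},n}, \tilde R^n) \to (\nu^*, M^*, R^*)$ a.s., and then verify, in order, the disintegration, Properties \ref{proper:1}(a) and (c), the identification $M^\theta = M^*$, Property \ref{proper:1}(b), and Property \ref{proper:1}(d). Throughout, the cost bound \eqref{eq:costbd} and the superlinearity of $\ell$ provide the uniform integrability of $r$ under $\{[\nu^n]_2\}$ that is needed for passing integrals in $r$ to the limit. Since $\clp_1$ is closed in $\clp(\Xi)$ and $\nu^n \in \clp_1$, the limit $\nu^*$ also lies in $\clp_1$ a.s., giving the desired disintegration $\nu^*(dx\, dr\, du\, ds) = \theta(s)(dx\, dr\, du)\, e^{-s}\, ds$; setting $\gamma := [\nu^*]_1$ makes (a) automatic. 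For (c), the refined identity $[\nu^n]_{134}(dx, du, ds) = [\nu^n]_{14}(dx, ds) \cdot c^{-1}\, du$, evident from \eqref{eq:nundefn}, is linear in $\nu^n$ and thus passes to the limit; disintegrating in $s$ yields $[\theta(s)]_{13}(dx, du) = [\theta(s)]_1(dx)\, c^{-1}\, du$ for a.e.\ $s$.

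To identify $M^\theta = M^*$, I would pass to the limit in the rescaled dynamics \eqref{eq:302}. The occupation term may be rewritten as $\int_0^t \delta_{\tilde Y^{\text{sc},n}_s}\, ds = \int 1_{[0,t]}(s)\, e^s\, [\nu^n]_{14}(dx, ds)$, whose a.s.\ limit—noting that the boundary $s = t$ is $\nu^*$-null by the absolute continuity of $[\nu^*]_4$—equals $\int_0^t [\theta(s)]_1\, ds$. Coupled with $\tilde M^{\text{sc},n} \to M^*$, this yields the linear evolution $M^*_t = M^*_0 + \int_0^t M^*_s\, ds - \int_0^t [\theta(s)]_1\, ds$. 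Its unique bounded solution with $M^*_0 = [\nu^*]_1 = \int_0^\infty e^{-s}[\theta(s)]_1\, ds$ (from (a)) is $M^*_t = e^t \int_t^\infty e^{-s}[\theta(s)]_1\, ds = M^\theta_t$.

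Property \ref{proper:1}(b) is the most delicate step. For fixed $f \in \B(S, \R)$ and $\phi \in C_c^1([0, \infty))$, I would apply the controlled martingale identity to the time-dependent test function $F(t, x) := \phi(t_n - \log t)\, f(x)$. After the change of variable $s = e^v$, the time reversal $w = t_n - v$, and division by $T_n$, the endpoint and $\phi'$ contributions are $O(1/T_n)$ in $L^\infty$, while the residual martingale has $L^2$-norm $O(T_n^{-1/2})$ (its quadratic variation is controlled via \eqref{eq:costbd} using $\alpha \le \ell(\alpha) + e$). What remains is, rewritten through $\nu^n$,
\begin{equation*}
c \int_\Xi \phi(w) \sum_{y \neq x} (f(y) - f(x))\, 1_{[0,\, Q_{xy}(\tilde M^{\text{sc},n}_w)]}(u)\, r_{xy}\, \nu^n(dx, dr, du, dw),
\end{equation*}
and passing to the a.s.\ limit—the discontinuity set $\{(u, w) : u = Q_{xy}(M^*_w)\}$ being $\nu^*$-null by the Lebesgue density in $u$—produces
\begin{equation*}
\int_0^\infty \phi(w)\, e^{-w}\, G_f(w)\, dw = 0 \quad \text{a.s.},
\end{equation*}
where $G_f(w)$ is precisely the integral in \ref{proper:1}(b). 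Running $\phi$ over a countable dense subset of $C_c^1([0, \infty))$ and $f$ over a basis of $\R^S$, and intersecting the resulting finitely many full-measure events, yields the pointwise-in-$s$ conclusion of (b). Property \ref{proper:1}(d) is analogous but simpler: the compensator $E^n_{xy} = c \int_\Xi 1_{\{x' = x\}} 1_{[0,\, Q_{xy}(\tilde M^{\text{sc},n}_w)]}(u)\, r_{xy}\, \nu^n(dx', dr, du, dw)$ of $\tilde R^{n, xy}$ differs from $\tilde R^{n, xy}$ by a martingale of $L^2$-size $O(T_n^{-1/2})$, so $E^n_{xy} \to R^{*, xy}$ a.s.; passing $E^n_{xy}$ to its limit against $\nu^*$ and then disintegrating $\theta(s)(dx, dr, du) = [\theta(s)]_1(dx)\, \eta(s)(dr, du \mid x)$ yields (d).

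The principal obstacle will be handling simultaneously, in (b) and (d), (i) an integrand that is discontinuous in $u$ through an indicator composed with the random $M^*_w$, (ii) the unboundedness of $r$, and—for (b)—(iii) the need to upgrade from a mean-zero statement to an a.s.\ and a.e.-in-$s$ identity. These are controlled, respectively, by the Lebesgue character of $[\nu^*]_3$ (making the discontinuity set $\nu^*$-null), the superlinear cost bound (yielding uniform integrability of $r$), and the $L^2$-smallness of the martingale residual, which permits arbitrary bounded weighting in the expectation and, after varying $\phi$ and $f$ as above, delivers the desired pointwise balance.
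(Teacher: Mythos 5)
Your proposal is correct and follows essentially the same route as the paper's proof: Skorohod representation, the fixed marginal $[\nu^n]_{34}$ for the disintegration and Property \ref{proper:1}(c), passage to the limit in the reversed dynamics \eqref{eq:302} for (a) and $M^\theta = M^*$, and martingale decompositions with the cost bound \eqref{eq:costbd} giving $O(T_n^{-1/2})$ residuals for (b) and (d). The only differences are cosmetic: you weight by smooth test functions $\phi$ where the paper varies the truncation point $t_0$, and you replace the $n$-dependent indicator $1_{[0,Q_{xy}(\tilde M^{\text{sc},n}_s)]}(u)$ in the limit via a continuous-convergence/null-discontinuity-set argument where the paper uses the explicit bound \eqref{eq:348n} based on $ab \le \sigma^{-1}(\ell(a)+e^{\sigma b}-1)$; both devices, combined with the uniform integrability of $r$ from superlinearity of $\ell$, accomplish the same step (just also record the one-line Fatou argument for the finiteness statement in Property \ref{proper:1}(b)).
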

    \begin{proof}
Since $[\nu^n]_{4}(ds) = e^{-s} ds$ for all $n$, the same is true of the limit $\nu^*$ as well, namely,
$[\nu^*]_{4} (ds) = e^{-s}ds$. This proves the first statement in the lemma. We now consider the last two statements.

We assume without loss of generality that the convergence of $(\nu^n,\tilde{M}^{\text{sc},n}, \tilde R^n)$ to $(\nu^*, M^*, R^*)$ happens along the full sequence and holds a.s. Note, from \eqref{eq:302} and \eqref{eq:nundefn}, that
\begin{equation} 
 \tilde{M}^{\text{sc},n}_{t}
        = \tilde{M}^{\text{sc},n}_{0} + \int_{0}^{t} \tilde{M}^{\text{sc},n}_s ds - 
        \int_{\R^a_+\times [0,c]\times [0,t]} e^s \nu^n(\cdot , \, dr\, du \, ds), \;  t \ge 0.
\end{equation}
Taking the limit as $n\to \infty$, we get, for $t \ge 0$,
    \begin{align} 
 {M}^{*}_{t}
        &= {M}^{*}_{0} + \int_{0}^{t} {M}^{*}_s ds - 
        \int_{\R^a_+\times [0,c]\times [0,t]} e^s \nu^*(\cdot , \, dr\, du \, ds)\nonumber\\
        &= {M}^{*}_{0} + \int_{0}^{t} {M}^{*}_s ds - 
        \int_{0}^t [\theta(s)]_1 ds . \label{eq:212}
\end{align}
Solving the above linear equation shows that $\theta$ satisfies (a) in Property \ref{proper:1} with $\gamma = M^*_0$ and also that, for $t\ge 0$,
\begin{equation}\label{eq:442n}
M_t^*(\cdot) = e^t \int_t^{\infty} e^{-s} [\theta(s)]_1(\cdot) ds, \; t \ge 0.\end{equation} 
Namely, $\theta$ satisfies the first identity in Property \ref{proper:1}(b),  with $M$ replaced by $M^*$. Note also that, by definition, $\gamma = [\nu^*]_1$.

We now show that $\theta$ satisfies the last two statements in Property \ref{proper:1}(b). The second statement in Property \ref{proper:1}(b) is immediate from \eqref{eq:407}, and the lowersemicontinuity and superlinearity of $\ell$, on observing that
\begin{align*} \E \sum_{(i,j) \in \sos} \int_{\R_+^a} \ell(r_{ij}) \nu^*(dx\,dr\,du\,ds)&\le \liminf_{n\to \infty}\E \sum_{(i,j) \in \sos} \int_{\R_+^a} \ell(r_{ij}) \nu^n(dx\,dr\,du\,ds)\\
&\le \sup_{n \in \N} \E \sum_{(i,j) \in \sos} \int_{\R_+^a} \ell(r_{ij}) [\nu^n]_{2}(dr) <\infty.\end{align*}

Now consider the third statement in Property \ref{proper:1}(b).
Fix $f \in \B(S:\R)$. Then, using \eqref{eq:ControlProcess}, for $0 \le t \le T_n$,
\begin{equation}
        f(\tilde{X}^n_t) = f(x) + \sum_{(i,j) \in \sos} \int_{[0,t] \times [0,c] \times \mathbb{R}_+} (f(j)-f(i)) 1_{[\tilde{X}^n_{s-}=i]} 1_{[0, Q_{ij}(\tilde{L}^n_{s-})]}(u) 1_{[0, \alpha^n_{ij}(s,u)]}(r) N_{ij}(ds\, du\, dr).
    \end{equation}
Note that, for $0 \le t \le T_n$, we can write
 \begin{align}
        f(\tilde{X}^n_t) &= f(x) + \clm^n(t) + \sum_{(i,j) \in \sos} 
        \int_0^t \left( \int_{[0, Q_{ij}(\tilde{L}^n_{s})]} \alpha^n_{ij}(s,u) du\right) (f(j)-f(i)) 1_{[\tilde{X}^n_{s}=i]} ds\\
        &= f(x) + \clm^n(t) + \int_0^t \tilde \cll^n_s f(\tilde X^n_s) ds,\label{eq:254}
    \end{align}
    where
    \be
\tilde \cll^n_s f(i) = \sum_{j: j \neq i} \left(\int_{[0, Q_{ij}(\tilde{L}^n_{s})]} \alpha^n_{ij}(s,u) du\right) (f(j)-f(i))
    \ee
    and
    \begin{align}
        \clm^n(t) = \sum_{(i,j) \in \sos} \int_{[0,t] \times [0,c] \times \mathbb{R}_+} (f(j)-f(i)) 1_{[\tilde{X}^n_{s-}=i]} 1_{[0, Q_{ij}(\tilde{L}^n_{s-})]}(u) 1_{[0, \alpha^n_{ij}(s,u)]}(r) N^c_{ij}(ds\, du\, dr),
    \end{align}
    with $N^c_{ij}(ds\, du\, dr) = N_{ij}(ds\, du\, dr) - ds\, du\, dr$.

    Now fix $t_0 \in [0, t_n]$ and let $T^0_n := e^{t_n-t_0}$.
We claim that $\clm^n(T^0_n)/T^0_n \to 0$ in probability as $n \to \infty$.
    To see this, note that, for some $c_1 <\infty$, 
    \be
\E(\clm^n(t))^2 \le c_1 \sum_{(i,j) \in A} \E \int_{[0,t]\times [0,c]} \alpha^n_{ij}(s,u) ds\, du, \mbox{ for all } n \in \N \mbox{ and } t \ge 0.
    \ee
    It then follows that, for some $c_2>0$,
    \begin{multline}
\frac{1}{(T_n^0)^2} \E(\clm^n(T^0_n))^2
\le c_1 e^{-2(t_n-t_0)}\sum_{(i,j) \in A} \E\int_{[0, e^{t_n-t_0}]\times [0,c]} \alpha^n_{ij}(s,u) ds\, du\\
= c_1 e^{-2(t_n-t_0)}\sum_{(i,j) \in A} \E \int_{(-\infty, t_n-t_0]\times [0,c]} e^v \alpha^{\text{sc},n}_{ij}(v,u) dv\, du\\
= c_1 e^{-(t_n-t_0)}e^{t_0} \sum_{(i,j) \in A} \E \int_{[t_0, \infty) \times [0,c]} e^{-w} \hat \alpha^{\text{sc},n}_{ij}(w,u) dw\, du\\
\le c_2 e^{-(t_n-t_0)}e^{t_0}
\sum_{(i,j) \in A} \E \int_{[0, \infty) \times [0,c]} e^{-w} (1+\ell(\hat \alpha^{\text{sc},n}_{ij}(w,u))) dw\, du \, ,\label{eq:1043}
    \end{multline}
    where, in order, we use the definition of $T_n^0$, rescale, invert time, and eventually bound the integrand. The claim is now immediate on using \eqref{eq:costbd}.
    Using this fact in \eqref{eq:254} we have that, as $n \to \infty$,
    \be \label{eq:803n}
\frac{1}{T_n^0} \int_0^{T_n^0}\tilde \cll^n_s f(\tilde X^n_s) ds \to 0, \mbox{ in probability. }
    \ee
    Changing variables as before,
    \begin{multline*}
\frac{1}{T_n^0} \int_0^{T_n^0}\tilde \cll^n_s f(\tilde X^n_s) ds =
e^{-(t_n-t_0)} \int_0^{e^{t_n-t_0}} \tilde \cll^n_s f(\tilde X^n_s) ds
= e^{-(t_n-t_0)} \int_{-\infty}^{t_n-t_0} e^v 
\tilde \cll^{\text{sc},n}_v f(\tilde X^{\text{sc},n}_{v}) dv,
    \end{multline*}
 where
 \be
\tilde{\cll}^{\text{sc},n}_v f(i) \doteq \tilde \cll^n_{e^v} f(i)
= \sum_{j: j \neq i} \left(\int_{[0, Q_{ij}(\tilde{L}^{\text{sc},n}_{v})]} \alpha^{\text{sc},n}_{ij}(v,u) du\right) (f(j)-f(i)), \; i \in S.
\ee
By another change of variables,
\begin{equation}
\frac{1}{T_n^0} \int_0^{T_n^0}\tilde \cll^n_s f(\tilde X^n_s) ds =
e^{t_0}\int_{t_0}^{\infty} e^{-w} \tilde \cll^{\text{sc},n}_{t_n-w} f(\tilde Y^{\text{sc},n}_{w}) dw
= e^{t_0}\int_{t_0}^{\infty} e^{-w} \hat \cll^{\text{sc},n}_{w} f(\tilde Y^{\text{sc},n}_{w}) dw,
\end{equation}
   where
   \be
\hat \cll^{\text{sc},n}_{w} f(i) \doteq \tilde \cll^{\text{sc},n}_{t_n-w} f(i)
=  \sum_{j: j \neq i} \left(\int_{[0, Q_{ij}(\tilde{M}^{\text{sc},n}_{w})]} \hat\alpha^{\text{sc},n}_{ij}(w,u) du\right) (f(j)-f(i)), \; i \in S.
   \ee
   From the above observations and using the definition of $\nu^n$ we now have that, as $n\to \infty$,
   \be
\int_{S \times \R^a_+ \times [0,c] \times [t_0, \infty)}\sum_{y: y \neq x} (f(y)-f(x))
 1_{[0, Q_{xy}(\tilde{M}^{\text{sc},n}_{s})]}(u) r_{xy}
\nu^n(dx\, dr\, du\, ds) = \frac{e^{-t_0}}{T_n^0} \int_0^{T_n^0}\tilde \cll^n_s f(\tilde X^n_s) ds \to 0, \label{eq:410n}
\ee
in probability.
We now argue that
\be \label{eq:348n}
\limsup_{n\to \infty}
\int_{\Xi} \sum_{y: y \neq x}
 |1_{[0, Q_{xy}(\tilde{M}^{\text{sc},n}_{s})]}(u) -
 1_{[0, Q_{xy}({M}^{*}_{s})]}(u)|
 r_{xy}
\nu^n(dx\, dr\, du\, ds) =0.
\ee
We will use the inequality:
\be
ab \le \frac{1}{\sigma} (\ell(a) + \exp\{\sigma b\} -1) \mbox{ for all } a,b\ge 0, \sigma >0.
\ee
From this inequality, for any $\sigma>0$, $u\ge 0$, and $q_1, q_2 \ge 0$,
\be
|1_{[0, q_1]}(u) -
 1_{[0, q_2]}(u)|
 r_{xy} \le \frac{1}{\sigma} (\ell(r_{xy}) + (e^{\sigma|1_{[0, q_1]}(u) -
 1_{[0, q_2]}(u)|}-1)) \le \frac{1}{\sigma} \ell(r_{xy})
 +  \frac{e^{\sigma}}{\sigma} |1_{[0, q_1]}(u) -
 1_{[0, q_2]}(u)|.
\ee
Using this observation in the left side of \eqref{eq:348n}, we have
\begin{align}
&\int_{\Xi} \sum_{y: y \neq x}
 |1_{[0, Q_{xy}(\tilde{M}^{\text{sc},n}_{s})]}(u) -
 1_{[0, Q_{xy}({M}^{*}_{s})]}(u)|
 r_{xy}
\nu^n(dx\, dr\, du\, ds)\\
&\le \frac{1}{\sigma} \int_{\R^a_+} \sum_{(x,y)\in \sos}
 \ell(r_{xy})
[\nu^n]_2(dr)  +
\frac{e^{\sigma}}{\sigma}\sum_{(x,y) \in \sos}\int_{0}^{\infty} e^{-s}
 |Q_{xy}(\tilde{M}^{\text{sc},n}_{s}) -
 Q_{xy}({M}^{*}_{s})| ds\\
 &\le \frac{1}{\sigma} \sup_{n} \left(\int_{\R^a_+} \sum_{(x,y)\in \sos}
 \ell(r_{xy})
[\nu^n]_2(dr)\right) + \frac{e^{\sigma}}{\sigma}\sum_{(x,y) \in \sos}\int_{0}^{\infty} e^{-s}
 |Q_{xy}(\tilde{M}^{\text{sc},n}_{s}) -
 Q_{xy}({M}^{*}_{s})| ds.\label{eq:740nn}
\end{align}
The statement in \eqref{eq:348n} now follows on recalling \eqref{eq:407},  using the continuity of $Q$,  sending $n \to \infty$ and then $\sigma \to \infty$ in the above display.

Combining \eqref{eq:410n} and \eqref{eq:348n} we now have that,
as $n\to \infty$,
   \be
\int_{S \times \R^a_+ \times [0,c] \times [t_0, \infty)}\sum_{y: y \neq x} (f(y)-f(x))
 1_{[0, Q_{xy}(M^*_{s})]}(u) r_{xy}
\nu^n(dx\, dr\, du\, ds)  \to 0, \label{eq:411n}
\ee
in probability.

Next from the convergence of $\nu^n \to \nu^*$,  property \eqref{eq:407}, and super-linearity of $\ell$, we now conclude that
\begin{align}
&\int_{S \times \R^a_+ \times [0,c] \times [t_0, \infty)}\sum_{y: y \neq x} (f(y)-f(x))
 1_{[0, Q_{xy}(M^*_{s})]}(u) r_{xy} 
\nu^n(dx\, dr\, du\, ds)\\
&\to 
\int_{S \times \R^a_+ \times [0,c] \times [t_0, \infty)}\sum_{y: y \neq x} (f(y)-f(x))
 1_{[0, Q_{xy}(M^*_{s})]}(u) r_{xy} 
\nu^*(dx\, dr\, du\, ds) \label{eq:742nn}
\end{align}
proving that the last quantity is $0$ a.s.\  for every $t_0>0$.
Since $\nu^*(dx\, dr\, du\, ds) =  e^{-s} \theta(s)(dx\, dr\, du) ds$, and $t_0>0$ is arbitrary, we have that
$$
\int_{S \times \R^a_+ \times [0,c]}\sum_{y: y \neq x} (f(y)-f(x))
 1_{[0, Q_{xy}(M^*_{s})]}(u) r_{xy} 
\theta(s)(dx\, dr\, du) = 0 \mbox{ for a.e.\ } s, \mbox{ a.s. }
$$
This shows that $\theta$ satisfies the last statement in Property \ref{proper:1}(b), with $M$ replaced by $M^*$.

We now check Property \ref{proper:1}(c). Note that
$$[\nu^n]_{134}(dx\,du\,ds) = \delta_{\tilde{Y}^{\text{sc},n}_s}(dx) c^{-1}du e^{-s} ds =
[\nu^n]_{14}(dx\, ds)c^{-1} du.$$
Sending  $n\to \infty$, we have $[\nu^*]_{134}(dx\, du\, ds) = [\nu^*]_{14}(dx\, ds) c^{-1} du$. Disintegrating both sides, we have
$[\theta(s)]_{13}(dx\,du)e^{-s} ds = [\theta(s)]_{1}(dx)c^{-1}du e^{-s} ds$,
verifying part (c).

Finally we argue that $\theta$ satisfies part (d) in Property \ref{proper:1} with $\flu = R^*$. Fix $(x,y) \in A$.
Note that
\begin{align*}
\tilde{R}^{n, xy} &
= e^{-t_n} \int_{[0, e^{t_n}] \times [0,c] \times \R_+} 1_{[\tilde{X}^n_{s-}=x]} 1_{[0, Q_{xy}(\tilde{L}^n_{s-})]}(u) 1_{[0, \alpha^n_{xy}(s,u)]}(r) N_{xy}(ds\, du\, dr)\\
&=  e^{-t_n} \int_{[0, e^{t_n}]\times [0,c]} 1_{[\tilde{X}^n_{s}=x]} 1_{[0, Q_{xy}(\tilde{L}^n_{s})]}(u)\alpha^n_{xy}(s,u) du ds + e^{-t_n} \tilde{\clm}^{n,xy}(e^{t_n}),
\end{align*}
where $\tilde{\clm}^{n,xy}(t)$ is a martingale with quadratic variation
$$\langle \tilde{\clm}^{n,xy} \rangle_t = \int_{[0, t]\times [0,c]} 1_{[\tilde{X}^n_{s-}=x]} 1_{[0, Q_{xy}(\tilde{L}^n_{s-})]}(u)\alpha^n_{xy}(s,u) du ds, \; t\ge 0.$$
Thus, by a change of variables, for some $c_1 \in (0,\infty)$,
\begin{align*}
\E[e^{-t_n}\tilde{\clm}^{n,xy}(t_n)]^2 &\le 
e^{-t_n} \E\int_{[0,\infty)\times [0,c]} e^{-s} \hat{\alpha}_{xy}^{\text{sc},n}(s,u) ds\, du
\\
&\le e^{-t_n} \E\int_{[0,\infty)\times [0,c]} e^{-s} [\ell(\hat{\alpha}_{xy}^{\text{sc},n}(s,u)) +c] ds\, du \le c_1 e^{-t_n} (K_0+1) \to 0, 
\end{align*}
as $n \to \infty$, where the last line uses \eqref{eq:costbd}.
This shows that
\be \label{eq:breakrr0}
\tilde{R}^{n, xy} = \hat{R}^{n, xy} + R^{n, xy}_0,
\ee
where $R^{n, xy}_0 \to 0$ in probability and, by a change of variables again,
$$\hat R^{n,xy} = \int_{[0, \infty) \times [0,c]} e^{-s} 1_{[\tilde{Y}^n_{s}=x]} 1_{[0, Q_{xy}(\tilde{M}^{\text{sc},n}_{s})]}(u)\hat \alpha^n_{xy}(s,u) du ds.$$
Note that $\hat R^{n,xy}$ can be written as
\begin{align}
\hat R^{n,xy} 
&= c\int_{\Xi}  1_{[z=x]} 1_{[0, Q_{xy}(\tilde{M}^{\text{sc},n}_{s})]}(u) r_{xy} \nu^n(dz\, dr\, du \, ds)\\
&= c\int_{\Xi}  1_{[z=x]} 1_{[0, Q_{xy}(M^*_{s})]}(u) r_{xy} \nu^n(dz\, dr\, du \, ds) + R^{n, xy}_1, \label{eq:803nn}
\end{align}
where
$$
R^{n, xy}_1 = c\int_{\Xi}  1_{[z=x]} [1_{[0, Q_{xy}(M^*_{s})]}(u) - 1_{[0, Q_{xy}(\tilde{M}^{\text{sc},n}_{s})]}(u)]  r_{xy} \nu^n(dz\, dr\, du \, ds).
$$
From \eqref{eq:348n}, $R^{n, xy}_1 \to 0$ in probability. As in the proof of
\eqref{eq:742nn}, from the convergence of $\nu^n \to \nu^*$,  property \eqref{eq:407}, and super-linearity of $\ell$, we now have on sending $n\to \infty$ in \eqref{eq:803nn}, that
\begin{align*}
 R^{*,xy} &= c\int_{\Xi}  1_{[z=x]} 1_{[0, Q_{xy}(M^*_{s})]}(u) r_{xy} \nu^*(dz\, dr\, du \, ds)\\
 &= c\int_{\Xi} e^{-s} 1_{[z=x]} 1_{[0, Q_{xy}(M^*_{s})]}(u) r_{xy} \theta(s)(dz\, dr\, du) ds\\
 &= c\int_{\R_+^a\times  [0,c] \times \R_+} e^{-s} [\theta(s)]_1(x)  1_{[0, Q_{xy}(M^*_{s})]}(u) r_{xy} \eta(s)(dr\, du \mid x) ds,
 \end{align*}
 where $\theta(s)(dz\, dr\, du) = [\theta(s)]_1(dz)\eta(s)(dr\, du \mid z)$.
 This shows that $\theta$ satisfies part (d) in Property \ref{proper:1} with $\flu = R^*$. We have thus shown that $\theta \in \cls_{[\nu^*]_1, R^*}$ and $M^{\theta} = M^*$ a.s. The result follows.
    \end{proof}

    We now complete the proof of the LDP upper bound by proving the inequality in \eqref{eq:Laplaceupp}.\\

    {\bf Proof of the Laplace upper bound \eqref{eq:Laplaceupp}.}
    From \eqref{eq:delbd}, \eqref{eq:n1029} and \eqref{eq:nundefn} it follows that
    \be\label{eq:348}
    -\frac{1}{T_n} \log \mathbb{E} \left[ e^{- T_n h(L_{T_n}, R_{T_n})} \right] + \delta
\ge \mathbb{E} \left[ h(\tilde{M}^{\text{sc},n}_0, \tilde R^n) +  c\int_{\Xi} \sum_{y: y \neq x} 1_{[0,Q_{xy}(\tilde{M}^{\text{sc},n}_{s})]}(u)
\ell(r_{xy}) \nu^n(dx\, dr\,du\, ds) \right]
\ee
Now suppose that along some subsequence $(\nu^n, \tilde{M}^{\text{sc},n},  \tilde R^n)$ converges in distribution to
$( \nu^*, M^*, R^*)$ and by a standard subsequential argument, assume without loss of generality that the convergence holds along the full sequence.
Then from Lemma \ref{lem:char} and definition of $M^{\theta}$ we have $M^*(0) = M^{\theta}(0) = [\nu^*]_1$, where $\nu^*(dx\,dr\,du\,ds) = \theta(s)(dx\,dr\,du)e^{-s}ds$.
Also, the map $\R_+^a \ni r \mapsto \sum_{(i,j) \in \sos} \ell(r_{ij})$ is lower semicontinuous. Thus passing to the limit as $n \to \infty$ in \eqref{eq:348} we obtain that
\begin{align*}
&\liminf_{n\to \infty} -\frac{1}{T_n} \log \mathbb{E} \left[ e^{- T_n h(L_{T_n})} \right] + \delta\\ 
& \ge \mathbb{E} \left[ h([\nu^*]_1, R^*) + c \int_{\Xi} 
\sum_{y: y\neq x} 1_{[0,Q_{xy}({M}^{*}_{s})]}(u)
\ell(r_{xy}) \nu^*(dx\, dr\,du\, ds) \right]\\
& = \mathbb{E} \left[ h([\nu^*]_1, R^*) + c\int_{\Xi} e^{-s} \sum_{y: y\neq x}  1_{[0,Q_{xy}({M}^{*}_{s})]}(u)\ell(r_{xy})\,  \theta(s)(dx\, dr\, du) ds \right]\\
&= \mathbb{E} \left[ h([\nu^*]_1, R^*) + J(\theta)\right] \ge \mathbb{E} \left[ h([\nu^*]_1, R^*) + I([\nu^*]_1, R^*)\right]
\ge \inf_{\gamma \in \clp(S)\times \R_+^a} \left[ h(\gamma, \flu) + I(\gamma, \flu)\right] \, ,
\end{align*}
where the next to last inequality uses the property $\theta \in \cls_{[\nu^*]_1, R^*}$ from Lemma \ref{lem:char} and the definition of $J$ and $I$ from \eqref{eq:JTheta} and \eqref{eq:IGamma}, respectively. This completes the proof of the upper bound in \eqref{eq:Laplaceupp}. \hfill \qed

    \section{ Preparation for the Lower bound}
\label{sec:laplow}
In this section and the next we complete the proof of the large deviation lower bound, namely \eqref{eq:Laplacelow}.  The main idea  is to use the variational formula in \eqref{eq:RepresentationFormula} and construct suitable control processes $\al^n$ such that the limsup of the expectation on the right side of \eqref{eq:RepresentationFormula} is bounded above by the expression on the right side of \eqref{eq:Laplacelow}.

We begin the section with an alternative representation of the rate function which is the topic of Section \ref{sec:altform}. This alternative representation suggests certain feedback controls $\alpha^n$ as control actions for which the associated state processes and costs converge in a suitable fashion so that the limsup of the expectations on the right side of \eqref{eq:RepresentationFormula} has the upper bound we need. In order to construct such controls we need certain regularity properties of the candidate feedback controls which are obtained by a series of approximations in Sections \ref{sec:prelimlow}-- \ref{sec:ctycont}. These regularized feedback controls will be then used in Section \ref{sec:construction} to complete the proof of the lower bound.

Throughout the section we will take Assumption \ref{assu:lowbd} to hold.

\subsection{Alternative form for the rate function}
\label{sec:altform}
We now provide an alternative variational formula for the rate function which will be more tractable for the proof of the lower bound.

Given $(\gamma, \flu) \in \clp(S)\times \R_+^a$, let $(\rho, H) \in \B(\R_+, \clp(S) \times \clk_0(S))$ be such that
the following properties hold.
\begin{property}\label{proper:2}
$\,$
\begin{enumerate}[(a)]
\item We have
$\gamma = \int_{0}^{\infty} e^{-s} \rho(s) ds$.

\item Let
$$M_t \doteq e^t \int_{t}^{\infty} e^{-s} \rho(s) ds, \; t \ge 0.$$
Then,  $H_s(x,y) = 0$ if $Q_{xy}(M_s) =0$ for a.e. $s$, and all $(x,y) \in A$.

\item For all $f\in \B(S,\R)$,
\be
\int_{S} \sum_{y: y \neq x} (f(y) - f(x)) H_s(x,y) \rho_s(dx) = 0, \mbox{ for a.e. } s \ge 0.
\ee

\item For all $(x,y) \in A$,
$$\int_0^{\infty} e^{-s} \rho_s(x) H_s(x,y) ds = \flu^{xy}.
$$
\end{enumerate}
\end{property}
We denote the collection of all such $(\rho,H)$ as $\tilde\cls_{\gamma,\flu}$, namely,
\be
\tilde\cls_{\gamma, \flu} = \{(\rho,H) \in \B(\R_+, \clp(S) \times \clk_0(S)): \mbox{Property \ref{proper:2} holds} \}.
\ee
We will denote the $M$ defined in Property \ref{proper:2} (b) as $M^{\rho}$ (suppressing dependence on $H$).

For $(\rho,H) \in \B(\R_+, \clp(S) \times \clk_0(S))$, let
\be \label{eq:150nn}
\tilde J(\rho,H) =\int_0^{\infty}  e^{-s} \left(\int_{S}  \sum_{y: y \neq x}  Q_{xy}(M_s) \ell \left(\frac{H_s(x,y)}{Q_{xy}(M_s)}\right) \,  \rho_s(dx)\right) ds,
\ee
where throughout, $0\ell(\al/0)$ is taken to be $+\infty$ if $\al>0$ and $0$ if $\al=0$.
Let
\be
\label{eq:Alternative}
\tilde I(\gamma, \flu) = \inf_{(\rho,H) \in \tilde \cls_{\gamma, \flu}} J(\rho,H), \;\; \gamma \in \clp(S), \flu \in \R_+^a.
\ee

Part (c) above says that, for a.e. $s$, $\rho_s$ is the stationary distribution of a Markov chain with rate matrix $H_s$. Using this and letting $\eta_s(x,y) = \rho_s(x) H_s(x,y)$, it is easy to see that the inner integral in \eqref{eq:150nn} equals $\IDW_{{Q(M(s))}}(\gamma(s), \eta(s))$, and consequently, $\tilde I$ equals the right side of \eqref{eq:155nn}.

The following lemma shows that $I$ and $\tilde I$ are the same.
\begin{lemma}\label{lem:ratesame}
For all $(\gamma, \flu) \in \clp(S)\times \R_+^a$, $I(\gamma, \flu) = \tilde I(\gamma, \flu)$.
\end{lemma}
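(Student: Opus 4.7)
The plan is to prove the two inequalities $\tilde I \le I$ and $I \le \tilde I$ by constructing, for each feasible element on one side, a feasible element on the other side whose cost is no larger. In both directions the map between the parameterizations is essentially a disintegration / integration against the $u$-variable, and the cost comparison reduces to a pointwise application of Jensen's inequality for the convex function $\ell$.

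\textbf{Direction $\tilde I\le I$.} Take $\theta\in\cls_{\gamma,\flu}$, set $\rho_s\doteq[\theta(s)]_1$, so that $M^\rho=M^\theta\eqqcolon M$ by the definition in Property \ref{proper:1}(b), and define
\[
H_s(x,y)\doteq c\int_{\R_+^a\times[0,c]} 1_{[0,Q_{xy}(M_s)]}(u)\,r_{xy}\,\eta(s)(dr\,du\mid x),\qquad (x,y)\in A.
\]
I check the four parts of Property \ref{proper:2}: (a) and (b) are immediate from Property \ref{proper:1}(a),(b), since if $Q_{xy}(M_s)=0$ the indicator vanishes $du$-a.e. For (c), substitute the definition of $H_s$ into the expression $\sum_{y\neq x}(f(y)-f(x))H_s(x,y)\rho_s(dx)$ and read off from Property \ref{proper:1}(b) (third identity) that it vanishes. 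For (d), the same substitution yields the required integral identity. For the cost comparison, using Property \ref{proper:1}(c) the marginal of $\eta(s)(dr\,du\mid x)$ in $u$ is $c^{-1}du$, so
\[
\mu_{s,x,y}(dr\,du)\doteq \tfrac{c}{Q_{xy}(M_s)}1_{[0,Q_{xy}(M_s)]}(u)\eta(s)(dr\,du\mid x)
\]
is a probability measure (when $Q_{xy}(M_s)>0$) with barycenter $H_s(x,y)/Q_{xy}(M_s)$ in the $r_{xy}$ coordinate. Jensen's inequality applied to $\ell$ gives
\[
Q_{xy}(M_s)\,\ell\!\left(\tfrac{H_s(x,y)}{Q_{xy}(M_s)}\right)\le c\int 1_{[0,Q_{xy}(M_s)]}(u)\,\ell(r_{xy})\,\eta(s)(dr\,du\mid x),
\]
and the cases where $Q_{xy}(M_s)=0$ are handled by the convention $0\,\ell(\alpha/0)=0$ when $H_s(x,y)=0$, which is guaranteed by Property \ref{proper:2}(b). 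Integrating against $\rho_s(dx)\,e^{-s}ds$ and summing over $y\neq x$ yields $\tilde J(\rho,H)\le J(\theta)$, and hence $\tilde I(\gamma,\flu)\le I(\gamma,\flu)$.

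\textbf{Direction $I\le \tilde I$.} The natural guess is the equality-case in the above Jensen step: for $(\rho,H)\in\tilde\cls_{\gamma,\flu}$, concentrate $\theta(s)$ on the graph of a deterministic map $u\mapsto r(s,x,u)\in\R_+^a$. Specifically, set
\[
\theta(s)(dx\,dr\,du)\doteq \rho_s(dx)\,c^{-1}du\,\delta_{r(s,x,u)}(dr),
\]
where for each $(x,y)\in A$
\[
r_{xy}(s,x,u)\doteq \tfrac{H_s(x,y)}{Q_{xy}(M_s)}\mbox{ if } u\in[0,Q_{xy}(M_s)],\qquad r_{xy}(s,x,u)\doteq 1\mbox{ otherwise},
\]
with $r_{xy}(s,x,u)\doteq 1$ whenever $Q_{xy}(M_s)=0$ (then $H_s(x,y)=0$). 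Property \ref{proper:1}(a) and (c) follow from the definitions, and $M^\theta=M^\rho$; for the third item of (b), integrating $c^{-1}du$ over $[0,Q_{xy}(M_s)]$ collapses the $u$-integral and turns the expression into Property \ref{proper:2}(c) times $c^{-1}$; (d) is an identical collapse. For the cost, because $\ell(1)=0$, only the region $u\in[0,Q_{xy}(M_s)]$ contributes and $\int_0^{Q_{xy}(M_s)}\ell(H_s(x,y)/Q_{xy}(M_s))\,du=Q_{xy}(M_s)\,\ell(H_s(x,y)/Q_{xy}(M_s))$, so $J(\theta)=\tilde J(\rho,H)$ exactly.

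\textbf{Anticipated obstacles.} The arguments are essentially bookkeeping once the right dictionary is identified, but two points deserve care. First, the convention $0\,\ell(\alpha/0)=+\infty$ when $\alpha>0$ must be consistent with the constraint built into Property \ref{proper:2}(b): the inequality $\tilde J\le J$ is vacuous unless $H_s(x,y)=0$ whenever $Q_{xy}(M_s)=0$, which is exactly what (b) enforces (and is automatic on the $I$-side, since $1_{[0,0]}(u)=0$ for a.e.\ $u$). Second, one should check measurability of the chosen $r(s,x,u)$ and of $H_s(x,y)$ in $s$; both follow from measurability of $(s,x)\mapsto Q_{xy}(M_s)$ (continuity of $Q$ and of $M$) and standard disintegration. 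With these two technicalities noted, the two displayed inequalities together give $I(\gamma,\flu)=\tilde I(\gamma,\flu)$.
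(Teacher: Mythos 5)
Your proof is correct and follows essentially the same route as the paper's: the same dictionary $\rho_s=[\theta(s)]_1$, $H_s(x,y)=c\int 1_{[0,Q_{xy}(M_s)]}(u)\,r_{xy}\,\eta(s)(dr\,du\mid x)$ with the Jensen/convexity step for $\tilde I\le I$, and the same Dirac-type $\theta$ built from $(\rho,H)$ with an exact cost identity for $I\le\tilde I$ (your making $r$ depend on $u$ is immaterial since the indicator kills the region $u>Q_{xy}(M_s)$ anyway). The only detail the paper handles that you gloss over is that the integral defining $H_s(x,y)$ could be infinite where $\rho_s(x)=0$ (Property \ref{proper:1}(b) only controls it where $\rho_s(x)>0$), so the paper patches $H_s(x,y)\doteq Q_{xy}(M_s)$ there to keep $H_s\in\clk_0(S)$; with that trivial fix your argument matches the paper's.
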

\begin{proof}
Fix $(\gamma, \flu) \in \clp(S)\times \R_+^a$. We first argue that $\tilde I(\gamma, \flu) \le I(\gamma, \flu)$.
Fix $\delta>0$ and let $\theta \in \cls_{\gamma, \flu}$ be such that $J(\theta) \le I(\gamma, \flu) + \delta$.
Let $\rho(s) \doteq [\theta(s)]_1$. Then from Property \ref{proper:1}(a) we see that part (a) of Property \ref{proper:2} holds with this choice of $\rho$.  Let $M \doteq M^{\theta}$.  Disintegrating $\theta(s)(dx\,dr\,du) = \rho(s)(dx) \eta(s)(dr\, du \mid x)$,
define for $(x,y) \in \sos$, $s\ge 0$,
\begin{equation}
H_s(x,y) \doteq c\int_{\R^a_+ \times [0,c]} 1_{[0,Q_{xy}(M_s)]}(u) r_{xy} \eta(s)(dr\, du \mid x), \;\; 
\end{equation}
when the right side is finite; otherwise set $H_s(x,y) \doteq Q_{xy}(M_s)$.
From Property \ref{proper:1}(b) we see that, for a.e. $s$, whenever $\rho_s(x)>0$, the right side in the above display is finite.
Note that $H$ satisfies  Property \ref{proper:2}(b).
Also note that, for a.e. $s>0$,
\begin{align*}
&\int_{S} \sum_{y: y \neq x} (f(y) - f(x)) H_s(x,y) \rho_s(dx) \\
&= c\int_{S}  \int_{\R^a_+ \times [0,c]} \sum_{y: y \neq x} (f(y) - f(x)) 1_{[0,Q_{xy}(M_s)]}(u) r_{xy} \eta(s)(dr\, du \mid x)     \rho_s(dx) \\
&= c\int_{S}  \int_{\R^a_+ \times [0,c]} \sum_{y: y \neq x} (f(y) - f(x)) 1_{[0,Q_{xy}(M_s)]}(u) r_{xy} \theta(s)(dx\, dr\, du)     =0 \, ,
\end{align*}
where the last equality follows by Property \ref{proper:1}(b). This verifies 
Property \ref{proper:2}(c) for $(\rho, H)$.
Finally, Property \ref{proper:2}(d) for $(\rho, H)$, is immediate from Property \ref{proper:1}(d) and the definition of $\rho, H$.
Thus it follows that $(\rho, H) \in \tilde \cls_{\gamma, \flu}$.

Next, by first disintegrating $\theta(s)(dx dr du)$, using the definition of $H_s(x,y)$, the convexity of $\ell$, and eventually using \eqref{eq:JTheta}, we get the following chain of inequalities:
\begin{align*}
\tilde I(\gamma, \flu) & \le \int_{[0,\infty)\times S}  \sum_{y: y \neq x} e^{-s}  Q_{xy}(M_s) \ell \left(\frac{H_s(x,y)}{Q_{xy}(M_s)}\right) \,  \rho_s(dx) ds\\
&= 
\int_{[0,\infty)\times S}  \sum_{y: y \neq x} e^{-s}  Q_{xy}(M_s) \ell \left(\frac{c}{Q_{xy}(M_s)}
\int_{\R^a_+ \times [0,Q_{xy}(M_s)]}  r_{xy} \eta(s)(dr\, du \mid x)
\right) \,  \rho_s(dx) ds\\
&\le 
\int_{[0,\infty)\times S}  \sum_{y: y \neq x} e^{-s}  Q_{xy}(M_s) \frac{c}{Q_{xy}(M_s)}
\int_{\R^a_+ \times [0,Q_{xy}(M_s)]} \ell \left(  r_{xy}\right) \eta(s)(dr\, du \mid x)
 \,  \rho_s(dx) ds\\
 &= c\int_{\Xi}  \sum_{y: y \neq x} e^{-s}  
 1_{[0,Q_{xy}(M_s)]}(u)
 \ell \left(  r_{xy}\right) \theta(s)(dx\, dr\, du)\, ds = J(\theta) \le I(\gamma, \flu) + \delta.
\end{align*}
Since, $\delta>0$ is arbitrary, we get $\tilde I(\gamma, \flu) \le I(\gamma, \flu)$.

For the reverse inequality, assume without loss of generality that $\tilde I(\gamma, \flu)<\infty$.
Suppose, for $\delta>0$, $(\rho, H) \in \tilde \cls_{\gamma, \flu}$ is such that $\tilde J(\rho, H) \le \tilde I(\gamma, \flu) + \delta$. 
Let $M \doteq  M^{\rho}$.
Let $v(s) = (v_{xy}(s), (x,y) \in \sos)$ with $v_{xy}(s) \doteq \frac{H_s(x,y)}{Q_{xy}(M_s)} 1_{\{Q_{xy}(M_s) >0\}} +  1_{\{Q_{xy}(M_s) =0\}}$, $(x,y) \in \sos$.
Define
$$\theta(s)(dx\, dr\, du) = \rho_s(dx) \delta_{v(s)}(dr) c^{-1} du
$$
Using Property \ref{proper:2}(a) we see that Property \ref{proper:1}(a) holds with this choice of $\theta$.
Noting that for a.e.\ $s \geq 0$, $H_s(x,y)=0$ if $Q_{xy}(M_s) =0$, we have by the definition of $\theta$ in the display above,  
\begin{align*}
&\int_{S\times \R^a_+\times [0,c]} \sum_{y: y \neq x} (f(y) - f(x)) 1_{[0, Q_{xy}(M_s)]}(u) r_{xy} \theta(s)(dx\, dr\, du) \\
&= c^{-1}\int_{S\times  [0,c]} \sum_{y: y \neq x} (f(y) - f(x)) 1_{[0, Q_{xy}(M_s)]}(u) \left(\frac{H_s(x,y)}{Q_{xy}(M_s)} 1_{\{Q_{xy}(M_s) >0\}} +  1_{\{Q_{xy}(M_s) =0\}}\right) \rho_s(dx) du\\
&= c^{-1}\int_{S} \sum_{y: y \neq x} (f(y) - f(x)) H_s(x,y)   \rho_s(dx) =0 \, ,
\end{align*}
where the last equality follows from Property \ref{proper:2}(c). This verifies Property \ref{proper:1}(b). Also, clearly  $\theta(s)$ satisfies Property \ref{proper:1}(c). 
Finally note that, with our definition of $\theta$,
\begin{align*}
&c\int_{\R^a_+\times [0,c]\times \R_+} e^{-s} [\theta(s)]_1(x) 1_{[0, Q_{xy}(M_s)]}(u) r_{xy} \eta(s)(dr\, du\mid x) ds\\
&= \int_{ [0,c]\times \R_+} e^{-s} \rho_s(x) 1_{[0, Q_{xy}(M_s)]}(u) v_{xy}(s)  ds\, du\\
&= \int_{\R_+} e^{-s} \rho_s(x)H_s(x,y)  ds = \flu^{xy}
\end{align*}
verifying Property \ref{proper:1}(d).
 Thus we have shown that $\theta \in \cls_{\gamma, \flu}$.
 
Next, by disintegrating $\theta(s)(dx dr du)$ once again and using the definition of $v(s)$, 
\begin{align*}
I(\gamma, \flu) &\le c\int_{\Xi} e^{-s} \sum_{y: y \neq x} 1_{[0, Q_{xy}(M_s)]}(u) \ell(r_{xy})\,  \theta(s)(dx\, dr\, du) ds\\
&= \int_{S \times [0,c]\times \R_+} e^{-s} \sum_{y: y \neq x} 1_{[0, Q_{xy}(M_s)]}(u) \ell\left(\frac{H_s(x,y)}{Q_{xy}(M_s)}\right)\, \rho(s)(dx) \, du\,  ds\\
&=\int_{S \times \R_+ } e^{-s} \sum_{y: y \neq x} Q_{xy}(M_s) \ell\left(\frac{H_s(x,y)}{Q_{xy}(M_s)}\right)\, \rho(s)(dx) \,  ds\\
&= \tilde J(\rho, H) \le \tilde I(\gamma, \flu) +\delta.
\end{align*}
Since $\delta>0$ is arbitrary, the inequality $I(\gamma, \flu) \le \tilde I(\gamma, \flu)$ follows.
\end{proof}
\subsection{Selection of near minimizer}
\label{sec:prelimlow}
We now proceed to the proof of the lower bound, namely the proof of \eqref{eq:Laplacelow} for a function $h \in C_b(\clp(S) \times \R_+^a)$. 
We assume, without loss of generality, that $h$ is Lipschitz (see \cite[Corollary 1.10]{buddupbook}), namely, for some $h_{\mbox{\tiny{lip}}} \in (0,\infty)$,
$$|h(x_1, \flu_1)-h(x_2, \flu_2)| \le h_{\mbox{\tiny{lip}}} (|x_1-x_2| + |\flu_1 -\flu_2|), \; x_i \in \clp(S), \flu_i \in \R_+^a, \; i=1,2.$$

Now fix  $\delta>0$. Let $(\gamma^*, \flu^*) \in \clp(S)\times \R_+^a$ be such that
\be
h(\gamma^*, \flu^*) + I(\gamma^*, \flu^*) \le \inf_{(\gamma, \flu) \in \clp(S)\times \R_+^a} [h(\gamma, \flu) + I(\gamma, \flu)]+\delta.
\ee
Recalling from Lemma \ref{lem:ratesame} that $I(\gamma, \flu) = \tilde I(\gamma, \flu)$, we can find $(\rho^*, H^*) \in \tilde \cls_{\gamma^*, \flu^*}$ such that
\be \label{eq:506n}
h(\gamma^*, \flu^*) + \tilde J(\rho^*, H^*) \le h(\gamma^*, \flu^*) + I(\gamma^*, \flu^*) + \delta \le \inf_{(\gamma, \flu) \in \clp(S)\times \R_+^a} [h(\gamma, \flu) + I(\gamma, \flu)]+2\delta.
\ee
Let $M^* \doteq M^{\rho^*}$. Our approach will be to use the variational formula in \eqref{eq:RepresentationFormula} and construct suitable controls $\alpha^n$ such that corresponding controlled processes $(\tilde L^n_{T_n}, \tilde R^n)$ well approximate $(\gamma^*, \flu^*)$ and the associated integral cost in \eqref{eq:RepresentationFormula} is appropriately close to $\tilde J(\rho^*, H^*)$.
For this we will need certain modifications of $(\gamma^*, \flu^*, \rho^*, H^*)$ that have desirable nondegeneracy and continuity properties.  This is the topic of the next two sections.

We begin with the following elementary lemma whose proof is omitted.
\begin{lemma}\label{lem:biconbex}
The map $(x,y) \mapsto x \ell(y/x)$ is convex on $\R_+\times \R_+$.
\end{lemma}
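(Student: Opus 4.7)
The plan is to recognize $(x,y)\mapsto x\,\ell(y/x)$ as the perspective of the convex function $\ell$, which immediately gives joint convexity on $\R_+\times\R_+$ (with the usual lower-semicontinuous extension at the boundary). Concretely, since $\ell''(z)=1/z>0$ on $(0,\infty)$, the function $\ell$ is convex on $[0,\infty)$, and a standard fact (e.g., Boyd--Vandenberghe, \S 3.2.6) asserts that its perspective $(x,y)\mapsto x\,\ell(y/x)$ is convex on $(0,\infty)\times[0,\infty)$. One then handles the axis $\{x=0\}$ by the lower-semicontinuous extension, taking the value $+\infty$ for $y>0$ and $0$ for $y=0$, which is the convention already implicit in the use of $\ell$ throughout the paper (see the convention stated just after \eqref{eq:150nn}).

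If a self-contained verification is preferred, I would expand $x\,\ell(y/x) = y\log y - y\log x - y + x$ on the open first quadrant and compute the Hessian
\begin{equation}
H(x,y)=\begin{pmatrix} y/x^2 & -1/x \\ -1/x & 1/y \end{pmatrix}.
\end{equation}
Both diagonal entries are nonnegative and $\det H(x,y) = \tfrac{1}{x^2}-\tfrac{1}{x^2}=0$, so $H$ is positive semidefinite on $(0,\infty)^2$. This yields convexity on the open quadrant, and lower semicontinuity of $\ell$ extends the inequality to the boundary.

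There is no real obstacle here; the only subtle point is the convention at $x=0$, which is harmless because $x\,\ell(y/x)\to+\infty$ as $x\downarrow 0$ for $y>0$, so the extended function remains convex and lower semicontinuous. Since the lemma is used later only to apply Jensen-type inequalities to integrands of the form $Q_{xy}(M_s)\,\ell\!\bigl(H_s(x,y)/Q_{xy}(M_s)\bigr)$ in \eqref{eq:150nn}, the joint convexity on $\R_+\times\R_+$ with the stated convention is exactly what is needed, and no finer regularity is required.
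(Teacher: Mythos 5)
The paper gives no argument for this lemma (it is stated as ``elementary lemma whose proof is omitted''), so there is nothing to compare against; your proof is correct and fills the gap. Both routes you give are valid: the perspective-function fact applies since $\ell''(z)=1/z>0$, and the direct computation $x\,\ell(y/x)=y\log y-y\log x-y+x$ with Hessian $\begin{pmatrix} y/x^2 & -1/x\\ -1/x & 1/y\end{pmatrix}$, which has nonnegative diagonal and zero determinant, gives positive semidefiniteness on the open quadrant. Your treatment of the boundary is also consistent with the paper: continuity in $y$ at $y=0$ (where the function equals $x$) extends convexity to $\{x>0,\,y\ge 0\}$, and the lower-semicontinuous extension at $x=0$ (value $+\infty$ for $y>0$, $0$ at the origin) agrees with the convention $0\,\ell(\alpha/0)=+\infty$ for $\alpha>0$ and $=0$ for $\alpha=0$ stated after \eqref{eq:150nn}, which is exactly the form in which the lemma is invoked via Jensen-type inequalities later in the paper.
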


\subsection{Ensuring nondegeneracy.}
\label{sec:nondeg}
The main result of this section given below ensures that $(\gamma^*, \flu^*, \rho^*, H^*)$ can be well approximated by a $(\gamma^1, \flu^1, \rho^1, H^1)$ which has suitable nondegeneracy properties.
\begin{lemma}\label{lem:nondeg}
There is a $(\gamma^1, \flu^1) \in \clp(S)\times \R_+^a$, $(\rho^1, H^1) \in \tilde \cls_{\gamma^1, \flu^1}$ and the corresponding $M^1 = M^{\rho^1}$, and $\cons_1 \in (0,\infty)$ such that

$$
h(\gamma^{1}, \flu^1) + \tilde J(\rho^{1}, H^{1}) \le h(\gamma^{*}, \flu^*) + \tilde J(\rho^{*}, H^{*}) +  \delta
$$
and
\begin{equation}\label{eq:allbds}
\inf_{s\ge 0} \inf_{x \in S} \min\{M^{1}_s(x), \rho^{1}_s(x) \} \ge \cons_1,\;
\inf_{s\ge 0} \inf_{(x,y) \in A_0} H^{1}_s(x,y) \ge \cons_1.
\end{equation}
\end{lemma}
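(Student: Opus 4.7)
My plan is to regularize $(\rho^*, H^*)$ by convex combination with a natural time-homogeneous reference pair. Let $\pi^*_{\min} \doteq \min_z \pi^*(z) > 0$. By Assumption \ref{assu:lowbd}(a), $\pi^* Q(\pi^*) = 0$, and by Assumption \ref{assu:lowbd}(c), $Q_{xy}(\pi^*) \ge k_Q \pi^*_{\min}$ for $(x,y) \in A_0$ while $Q_{xy}(\pi^*) = 0$ for $(x,y) \notin A_0$. Hence the constant pair $(\rho^{\mathrm{ref}}_s, H^{\mathrm{ref}}_s) \equiv (\pi^*, Q(\pi^*))$ lies in $\tilde \cls_{\pi^*, \flu^{\mathrm{ref}}}$ with $\flu^{\mathrm{ref}, xy} = \pi^*(x) Q_{xy}(\pi^*)$, has $M^{\mathrm{ref}} \equiv \pi^*$, and has zero $\tilde J$-cost (since $\ell(1) = 0$). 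This reference pair already enjoys the nondegeneracy needed in \eqref{eq:allbds}, so the strategy is to slightly mix it in.

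For $\kappa \in (0, 1/2]$ to be chosen, I would define $\rho^\kappa_s \doteq (1-\kappa)\rho^*_s + \kappa\pi^*$, $\eta^\kappa_s(x,y) \doteq (1-\kappa)\rho^*_s(x) H^*_s(x,y) + \kappa\pi^*(x) Q_{xy}(\pi^*)$, $H^\kappa_s(x,y) \doteq \eta^\kappa_s(x,y)/\rho^\kappa_s(x)$, $\gamma^\kappa \doteq (1-\kappa)\gamma^* + \kappa\pi^*$, and $\flu^\kappa \doteq (1-\kappa)\flu^* + \kappa\flu^{\mathrm{ref}}$. Affineness of $Q$ (Assumption \ref{assu:lowbd}(b)) gives $M^\kappa_s = (1-\kappa) M^*_s + \kappa\pi^*$ and $Q(M^\kappa_s) = (1-\kappa) Q(M^*_s) + \kappa Q(\pi^*)$. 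Verifying $(\rho^\kappa, H^\kappa) \in \tilde \cls_{\gamma^\kappa, \flu^\kappa}$ is routine: part (a) of Property \ref{proper:2} is immediate; (b) holds because $\eta^\kappa_s(x,y) = 0$ for $(x,y) \notin A_0$; (c) holds because the stationarity test is linear in the flux $\eta = \rho H$ and both $\rho^*_s H^*_s$ and $\pi^* Q(\pi^*)$ are stationary fluxes; (d) follows by integrating the definition of $\eta^\kappa$. The nondegeneracy \eqref{eq:allbds} holds with $\cons_1 = \kappa \min\{\pi^*_{\min}, k_Q(\pi^*_{\min})^2\}$, using $\rho^\kappa_s(x), M^\kappa_s(x) \ge \kappa\pi^*_{\min}$ and, for $(x,y) \in A_0$, $H^\kappa_s(x,y) \ge \kappa\pi^*(x) Q_{xy}(\pi^*)/\rho^\kappa_s(x) \ge \kappa k_Q(\pi^*_{\min})^2$ (since $\rho^\kappa_s(x) \le 1$).

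The main obstacle is bounding the cost. Write $U^\kappa_s(x,y) \doteq \rho^\kappa_s(x) Q_{xy}(M^\kappa_s)$, $V^\kappa_s \doteq \eta^\kappa_s$, $V^*_s \doteq \rho^*_s H^*_s$, $V^{\mathrm{ref}} \doteq \pi^* Q(\pi^*)$, and $F(u,v) \doteq u \ell(v/u)$. The snag is that $U^\kappa_s$ is \emph{not} a convex combination of $U^*_s$ and $U^{\mathrm{ref}}$ due to cross terms. I sidestep this by decomposing only $Q(M^\kappa_s)$, so that $U^\kappa_s = (1-\kappa)\rho^\kappa_s Q(M^*_s) + \kappa\rho^\kappa_s Q(\pi^*)$ while $V^\kappa_s = (1-\kappa) V^*_s + \kappa V^{\mathrm{ref}}$, and joint convexity of $F$ from Lemma \ref{lem:biconbex} yields
\begin{equation*}
F(U^\kappa_s, V^\kappa_s) \le (1-\kappa) F(\rho^\kappa_s Q(M^*_s), V^*_s) + \kappa F(\rho^\kappa_s Q(\pi^*), V^{\mathrm{ref}}).
\end{equation*}
For the first summand, the identity $F(u,v) = v\log(v/u) - v + u$ gives
\begin{equation*}
F(\rho^\kappa_s Q(M^*_s), V^*_s) - F(U^*_s, V^*_s) = V^*_s(x,y) \log\!\bigl(\rho^*_s(x)/\rho^\kappa_s(x)\bigr) + \bigl(\rho^\kappa_s(x) - \rho^*_s(x)\bigr) Q_{xy}(M^*_s).
\end{equation*}
Since $\rho^\kappa_s \ge (1-\kappa)\rho^*_s$ and $-\log(1-\kappa) \le 2\kappa$ for $\kappa \le 1/2$, the first term is at most $2\kappa V^*_s(x,y)$ (with the convention $0 \cdot (-\infty) = 0$ covering $\rho^*_s(x) = 0$), and the second is at most $\kappa c_Q$. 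For the second summand, a short computation gives $F(\rho^\kappa_s Q(\pi^*), \pi^* Q(\pi^*)) = Q_{xy}(\pi^*)\,\rho^\kappa_s(x)\,\ell\!\bigl(\pi^*(x)/\rho^\kappa_s(x)\bigr)$, and since $\pi^*(x)/\rho^\kappa_s(x) \le 1/\kappa$, this is bounded pointwise by $c_Q(|\log\kappa|+1)$. Integrating against $e^{-s}\,ds$, summing over $(x,y) \in A$, and using $\int_0^\infty e^{-s} \sum_{(x,y)} V^*_s \, ds = \sum_{(x,y)} \flu^{*,xy} < \infty$ yields $\tilde J(\rho^\kappa, H^\kappa) \le \tilde J(\rho^*, H^*) + O(\kappa|\log\kappa|)$. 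Combining with the Lipschitz bound $|h(\gamma^\kappa, \flu^\kappa) - h(\gamma^*, \flu^*)| = O(\kappa)$ and choosing $\kappa$ small enough gives the claim.
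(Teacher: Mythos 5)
Your proposal is correct, and the regularization itself is exactly the paper's: the paper also mixes $(\rho^*,H^*)$ with the pair $(\pi^*,Q(\pi^*))$ via $\rho^{\al}_s=(1-\al)\rho^*_s+\al\pi^*$, $\rho^{\al}_sH^{\al}_s=(1-\al)\rho^*_sH^*_s+\al\pi^*Q(\pi^*)$, with the same $\gamma^{\al},\flu^{\al},M^{\al}$ and the same verification of Property \ref{proper:2} and of the lower bounds \eqref{eq:allbds}. Where you genuinely diverge is in the cost estimate, which is the heart of the lemma. The paper writes $\rho^{\al}Q(M^{\al})$ and $\rho^{\al}H^{\al}$ as convex combinations with weights $(1-\al)^2$ and $r(\al)=2\al(1-\al)+\al^2$, applies Lemma \ref{lem:biconbex} to get a main term plus an error $A_{\al}\ell(B_{\al}/A_{\al})$, and then must show this error vanishes after integration by a pointwise limit plus dominated convergence, which in turn requires extracting the integrability of $\varrho_s\log\varrho_s$ (the paper's \eqref{eq:134nn}) from $\tilde J(\rho^*,H^*)<\infty$. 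You instead decompose only $Q(M^{\kappa}_s)=(1-\kappa)Q(M^*_s)+\kappa Q(\pi^*)$ while keeping $\rho^{\kappa}_s$ in both components, so that joint convexity pairs $V^*$ with $\rho^{\kappa}Q(M^*)$ and $V^{\mathrm{ref}}$ with $\rho^{\kappa}Q(\pi^*)$; the first summand is then compared to the true integrand through the exact identity $F(u_1,v)-F(u_2,v)=v\log(u_2/u_1)+(u_1-u_2)$, giving the pointwise bound $2\kappa V^*_s+\kappa c_Q$, and the second summand is bounded by $c_Q(|\log\kappa|+1)$. This buys an explicit $O(\kappa|\log\kappa|)$ deterministic error that is integrated using only $\sum_{(x,y)}\flu^{*,xy}<\infty$ (Property \ref{proper:2}(d)), so no dominated convergence argument and no $\varrho\log\varrho$ integrability lemma are needed; the paper's version, by contrast, requires less bookkeeping at the level of identities but more analysis to justify the limit. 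Two small points you should make explicit if you write this up: the cancellation of $Q_{xy}(M^*_s)$ inside the logarithm (and the convention $0\cdot(-\infty)=0$) is legitimate precisely because Property \ref{proper:2}(b) forces $V^*_s(x,y)=0$ whenever $\rho^*_s(x)=0$ or $Q_{xy}(M^*_s)=0$ for a.e.\ $s$, and the bound $\rho^{\kappa}_s(x)\ell(\pi^*(x)/\rho^{\kappa}_s(x))\le|\log\kappa|+1$ should be obtained from the expansion $\pi^*\log(\pi^*/\rho^{\kappa})-\pi^*+\rho^{\kappa}$ rather than from a crude supremum of $\ell$ over $[0,1/\kappa]$, which would give only $O(\kappa^{-1}|\log\kappa|)$ before multiplying by $\rho^{\kappa}$.
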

\begin{proof}
Fix $\al \in (0,1)$ and let
$$\rho_s^{\al} = (1-\al) \rho_s^* + \al \pi^*, \; s \ge 0.$$
Also, let $H^{\al} \in \B([0,\infty):\clk(S))$ be defined as
$$
H^{\al}_s(x,y) = \frac{1}{\rho_s^{\al}(x)}\left( (1-\al) H^*_s(x,y) \rho^*_s(x) + \al Q_{xy}(\pi^*) \pi^*(x)\right), \; (x,y) \in \sos,
$$
$$M^{\al}_s \doteq (1-\al) M^*_s + \al \pi^*, \; s \ge 0,$$
$$
\flu^{\al, xy} = \int_0^{\infty} e^{-s} \rho^{\al}_s(x) H^{\al}_s(x,y) ds = (1-\al) \flu^{*,xy} + \al\pi^*(x)Q_{xy}(\pi^*),$$
and $\gamma^{\al} = (1-\al)\gamma^* + \al \pi^*$.
Then it is easy to verify that $(\rho^{\al}, H^{\al}) \in \tilde \cls_{\gamma^{\al}, \flu^{\al}}$ and
$M^{\rho^{\al}} = M^{\al}$.
From Assumption \ref{assu:lowbd}(c)
 \begin{equation}
 \inf_{(x,y) \in A_0} Q_{xy}(m) \ge k_{Q} \inf_{x \in S} m_x, \mbox{ for all }  m \in \clp(S).
\end{equation}
Furthermore,  from this assumption, we can find
$a_{\pi^*} \in (0,\infty)$ such that
\begin{equation}\label{eq:703nnn}
\inf_{x\in S} \pi^*_x \ge a_{\pi^*}.
\end{equation}

{Using the linearity Assumption \eqref{assu:lowbd} (b)}, with $r(\al) = 2\al(1-\al) +\al^2$, write 
\begin{align*}
\rho^{\al}_s(x) Q_{xy}(M^{\al}_s)  &= 
(1-\al)^2 \rho^{*}_s(x)Q_{xy}(M^{*}_s) \\
&\quad + r(\al) \left(\frac{\al(1-\al)}{r(\al)}\pi^*Q_{xy}(M^{*}_s)
+ \frac{\al(1-\al)}{r(\al)} \rho^*_s(x)Q_{xy}(\pi^*)
+ \frac{\al^2}{r(\al)}\pi^*(x)Q_{xy}(\pi^*)\right).
\end{align*}
Similarly write
\begin{align*}
\rho^{\al}_s(x)H^{\al}_s(x,y) &= (1-\al) \rho^*_s(x) H^{*}_s(x,y) + \al \pi^*(x) Q_{xy}(\pi^*)\\
&=
(1-\al)^2 \rho^{*}_s(x)H^{*}_s(x,y)\\
&\quad+ r(\al)\left(\frac{\al(1-\al)}{r(\al)}\rho_s^*(x)H^*_s(x,y)+ \frac{\al(1-\al)}{r(\al)} \pi^*(x)Q_{xy}(\pi^*) + \frac{\al^2}{r(\al)}\pi^*(x)Q_{xy}(\pi^*)\right)
\end{align*}

Using Lemma \ref{lem:biconbex}, for $s\ge 0$ and $(x,y) \in A_0$
\begin{multline}
Q_{xy}(M^{\al}_s) \rho^{\al}_s(x) \ell \left(\frac{H^{\al}_s(x,y)}{Q_{xy}(M^{\al}_s)}\right) 
= Q_{xy}(M^{\al}_s) \rho^{\al}_s(x) \ell \left(\frac{\rho^{\al}_s(x)H^{\al}_s(x,y)}{\rho^{\al}_s(x)Q_{xy}(M^{\al}_s)}\right) \\
\le (1-\al)^2Q_{xy}(M^{*}_s) \rho^{*}_s(x) \ell \left(\frac{H^{*}_s(x,y)}{Q_{xy}(M^{*}_s)}\right)  \\
 + (\al(1-\al)\pi^*Q_{xy}(M^{*}_s)
+ \al(1-\al) \rho^*_s(x)Q_{xy}(\pi^*)
+ \al^2\pi^*(x)Q_{xy}(\pi^*))\\
\times \ell\left(\frac{
\al(1-\al)\rho_s^*(x)H^*_s(x,y)
+ \al(1-\al) \pi^*(x)Q_{xy}(\pi^*)
+ \al^2\pi^*(x)Q_{xy}(\pi^*)
 }
 {
\al(1-\al)\pi^*(x)Q_{xy}(M^{*}_s)
+ \al(1-\al) \rho^*_s(x)Q_{xy}(\pi^*)
+ \al^2\pi^*(x)Q_{xy}(\pi^*)
 }
 \right).  \label{eq:708nn}
\end{multline}
Write the second term in the above display as
$$ A_{\al}(s,x,y) \ell\left(\frac{ B_{\al}(s,x,y)}{ A_{\al}(s,x,y)}\right).
$$
Note that, for a.e. $s>0$
$$
\lim_{\al \to 0} \sup_{(x,y)\in A_0} A_{\al}(s,x,y) = 0,\;
\lim_{\al \to 0} \sup_{(x,y)\in A_0} B_{\al}(s,x,y) |\log\al| = 0.
$$
Also, {denoting $\log^- x = \max (0, - \log x)$} (suppressing $(s,x,y)$)
\begin{align}
A_{\al} \ell\left(\frac{ B_{\al}}{ A_{\al}}\right) &=
B_{\al} \log B_{\al} - B_{\al} \log A_{\al} - B_{\al}+ A_{\al}\label{eq:1142nn}\\
&\le B_{\al} \log B_{\al} + B_{\al} \log^- A_{\al} +  A_{\al}.
\end{align}
Recalling the definition of $A^{\al}$ we see that
\begin{align*} 
B_{\al}(s,x,y) \log^- A_{\al}(s,x,y) &\le B_{\al}(s,x,y) \log^-(\al^2 \pi^*_x Q_{xy}(\pi^*)\\
&\le 2 B_{\al}(s,x,y) |\log \al| + B_{\al}(s,x,y) |\log \pi^*_x Q_{xy}(\pi^*)| \to 0 \mbox{ as } \al \to 0.
\end{align*}

Combining the last three displays we see that, for a.e. $s$,
$$ \lim_{\al \to 0} \sup_{(x,y)\in A_0} A_{\al}(s,x,y) \ell\left(\frac{ B_{\al}(s,x,y)}{ A_{\al}(s,x,y)}\right) = 0.
$$
We claim that for some $C: \R_+ \to \R_+$, with $\int_{[0, \infty)} e^{-s} C(s) ds < \infty$
$$
\sup_{\al \in (0, 1/2)} \sum_{(x,y) \in A_0} A_{\al}(s,x,y) \ell\left(\frac{ B_{\al}(s,x,y)}{ A_{\al}(s,x,y)}\right) \le C(s).
$$
Assuming the claim,
from \eqref{eq:708nn}, we see  that
\begin{align*}
\tilde J(\rho^{\al}, H^{\al})
\le \tilde J(\rho^{*}, H^{*})
+ \int_{[0,\infty)}  \sum_{(x,y) \in A_0} e^{-s} A_{\al}(s,x,y) \ell\left(\frac{ B_{\al}(s,x,y)}{ A_{\al}(s,x,y)}\right)   ds
\end{align*}
and, from dominated convergence, the second term on the right side converges to $0$ as $\al \to 0$.
Since $(\gamma^{\al}, \flu^{\al}) \to (\gamma^{*}, \flu^{*})$ as $\al \to 0$, and $h$ is continuous,
the result follows on taking $\al$ suitably small 
and setting $(M^{\al}, H^{\al}, \rho^{\al}, \gamma^{\al}, \flu^{\al})$ as $(M^{1}, H^{1}, \rho^{1}, \gamma^1, \flu^1)$. 

Finally we prove the claim.
From the first line of \eqref{eq:1142nn} we see that, since $\tilde J(\rho^*, H^*) < \infty$, and $Q$ is bounded, with $\varrho_s(x,y) = H^*_s(x,y) \rho^*_s(x)$
\be\label{eq:134nn}
\sum_{(x,y) \in A_0} \int_0^{\infty} e^{-s} |\varrho_s(x,y) \log \varrho_s(x,y))|  < \infty.
\ee
Using \eqref{eq:1142nn} again, for some $c_1, c_2, c_3>0$
\begin{align*}
\sup_{\al \in (0, 1/2)} A_{\al}(s,x,y) \ell\left(\frac{ B_{\al}(s,x,y)}{ A_{\al}(s,x,y)}\right)
&\le c_1\sup_{\al \in (0, 1/2)}[ 1+B_{\al}(s,x,y)\log B_{\al}(s,x,y) + B_{\al}(s,x,y) \log^-A_{\al}(s,x,y) ]\\
&\le c_2\sup_{\al \in (0, 1/2)}[1+B_{\al}(s,x,y)\log B_{\al}(s,x,y) + 2B_{\al}(s,x,y) |\log \al|]\\
&\le c_3\sup_{\al \in (0, 1/2)}[1+|\varrho_s(x,y)\log \varrho_s(x,y)|].
\end{align*}
In the last line we have used the fact that $\sup_{\al \in (0, 1/2)} \al \log \al <\infty$.
The claim now follows on taking
$$C(s) = c_3\sup_{\al \in (0, 1/2)}\sum_{(x,y) \in A_0} [1+|\varrho_s(x,y)\log \varrho_s(x,y)|]$$
and using the property in \eqref{eq:134nn}. The result follows.

\end{proof}

\subsection{Finite time truncation and continuity of controls}
\label{sec:ctycont}

The following lemma says that we can modify the controls $(\rho^1, H^1)$ so that they are continuous. The fact that $\rho^1, H^1$ and $M^1$ have the nondegeneracy property in \eqref{eq:allbds} will be useful in constructing the modification.
\begin{lemma}
\label{lem:step2}
There exist $(\gamma^2, \flu^2) \in \clp(S)\times \R_+^a$, $(\rho^2, H^2) \in \tilde \cls_{\gamma^2, \flu^2}$, $\cons_2, \cons_3 \in (0,\infty)$ such that
$$h(\gamma^2, \flu^2) + \tilde J(\rho^2, H^2) \le
h(\gamma^1, \flu^1) + \tilde J(\rho^1, H^1) + \delta
$$
and, with $M^2 = M^{\rho^2}$,
\be \label{eq:713nn} \inf_{s\ge 0} \inf_{x \in S} \min\{M^{2}_s(x), \rho^{2}_s(x)\} \ge \cons_2,\;
\mbox{ and for all } s \ge 0 \mbox{ and } (x,y) \in A_0, H^{2}_s(x,y) \ge \cons_2.\ee
Furthermore, $$
\sup_{x\in S} |\rho^2_s(x)- \rho^2_t(x)| \le \cons_3|t-s|, \mbox{ for all } s,t >0
$$
and $t\mapsto H^2_t(x,y)$ is a continuous map on $\R_+$ for all $(x,y) \in A_0$.
\end{lemma}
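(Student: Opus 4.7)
The plan is to construct $(\rho^2, H^2)$ by time-mollification of $(\rho^1, \varrho^1)$, where $\varrho^1_s(x,y) \doteq \rho^1_s(x) H^1_s(x,y)$ is the associated flux. For $\epsilon > 0$ to be chosen small, set
\[
\rho^\epsilon(t) = \frac{1}{\epsilon}\int_t^{t+\epsilon} \rho^1(u)\,du, \qquad
\varrho^\epsilon(t)(x,y) = \frac{1}{\epsilon}\int_t^{t+\epsilon} \varrho^1_u(x,y)\,du,
\]
and $H^\epsilon_t(x,y) = \varrho^\epsilon(t)(x,y)/\rho^\epsilon(t)(x)$, with the convention $H^\epsilon_t(x,y) = 0$ when the numerator vanishes; in particular $H^\epsilon_t \equiv 0$ on $A\setminus A_0$ since $H^1$ vanishes there. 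Define $\gamma^\epsilon, \flu^\epsilon$ via Property~\ref{proper:2}(a) and (d), and set $(\rho^2, H^2, \gamma^2, \flu^2) = (\rho^\epsilon, H^\epsilon, \gamma^\epsilon, \flu^\epsilon)$ for $\epsilon$ sufficiently small.

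The first step is to verify $(\rho^\epsilon, H^\epsilon) \in \tilde\cls_{\gamma^\epsilon,\flu^\epsilon}$ and the nondegeneracy \eqref{eq:713nn}. Parts (a) and (d) of Property~\ref{proper:2} hold by construction. A Fubini/change-of-variables computation yields the key identity
\[
M^\epsilon_t = \frac{1}{\epsilon}\int_t^{t+\epsilon} M^1_u\,du,
\]
from which part (b) follows using that $H^1$ vanishes off $A_0$. Part (c) is the divergence-free condition on $\varrho^\epsilon$ and is preserved by linear averaging. For nondegeneracy, $\rho^\epsilon$ and $M^\epsilon$ are convex combinations of values of $\rho^1$ and $M^1$ and hence remain bounded below by $\cons_1$; on $A_0$, the integrand $\varrho^1_u(x,y) = \rho^1_u(x)H^1_u(x,y) \geq \cons_1^2$, so $\varrho^\epsilon \geq \cons_1^2$, and dividing by $\rho^\epsilon \leq 1$ gives $H^\epsilon \geq \cons_1^2$. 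The Lipschitz bound for $\rho^\epsilon$ with $\cons_3 = 2/\epsilon$ follows from explicit differentiation of $\rho^\epsilon$; continuity of $H^\epsilon(x,y)$ on $A_0$ follows since $\varrho^\epsilon, \rho^\epsilon$ are continuous and $\rho^\epsilon$ is uniformly bounded away from zero.

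The principal technical step is to show $\tilde J(\rho^\epsilon, H^\epsilon) \to \tilde J(\rho^1, H^1)$ as $\epsilon \downarrow 0$. Writing $A_s = \rho_s(x) Q_{xy}(M_s)$, $B_s = \varrho_s(x,y)$ and using the identity
\[
A\,\ell(B/A) = B\log B - B\log A + A - B,
\]
each of the four resulting terms will be handled individually. By Assumption~\ref{assu:lowbd}(b) (affineness of $Q$), $Q(M^\epsilon_s) = \epsilon^{-1}\int_s^{s+\epsilon}Q(M^1_u)\,du$, and combined with the nondegeneracy this confines $A^\epsilon_s$ on $A_0$ to the interval $[k_Q\cons_1^2, c_Q]$, so that $|\log A^\epsilon|$ is uniformly bounded. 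The three terms not involving $B\log B$ therefore converge by dominated convergence, using the a.e.\ convergences $\rho^\epsilon, \varrho^\epsilon, M^\epsilon \to \rho^1, \varrho^1, M^1$ provided by the Lebesgue differentiation theorem. The delicate term is $\int e^{-s}\varrho^\epsilon_s\log\varrho^\epsilon_s\,ds$: here I apply Jensen's inequality to the convex $\ell$ to obtain $\ell(\varrho^\epsilon_s) \leq \epsilon^{-1}\int_s^{s+\epsilon}\ell(\varrho^1_u)\,du$, and a Fubini computation gives the uniform upper bound
\[
\int_0^\infty e^{-s}\ell(\varrho^\epsilon_s)\,ds \;\leq\; \frac{e^\epsilon-1}{\epsilon}\int_0^\infty e^{-v}\ell(\varrho^1_v)\,dv.
\]
The right-hand side is finite (since $\tilde J(\rho^1, H^1) < \infty$ together with the boundedness of $|\log(\rho^1 Q(M^1))|$ on $A_0$ forces $\ell(\varrho^1) \in L^1(e^{-v}dv)$) and converges to $\int e^{-v}\ell(\varrho^1_v)\,dv$ as $\epsilon \to 0$; combined with Fatou's lemma applied to the a.e.\ pointwise convergence, this gives equality in the limit. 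The Lipschitz continuity of $h$ together with the convergences $(\gamma^\epsilon,\flu^\epsilon) \to (\gamma^1,\flu^1)$ (another Fubini argument) controls the $h$ terms, and choosing $\epsilon$ sufficiently small produces the required $\delta$ bound.
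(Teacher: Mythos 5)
Your construction is essentially the one in the paper: you mollify $\rho^1$ and the flux $\varrho^1_s(x,y)=\rho^1_s(x)H^1_s(x,y)$ over a small time window and set $H$ equal to the ratio, which is exactly the paper's $H^{1,\kappa}$ in \eqref{eq:504nn} (the paper averages over $[s-\kappa,s]$ with a constant extension to negative times, you average over $[t,t+\epsilon]$; your choice buys the clean identity $M^\epsilon_t=\epsilon^{-1}\int_t^{t+\epsilon}M^1_u\,du$, which indeed holds by the Fubini computation you indicate, in place of the paper's bound $\sup_s\|M^{1,\kappa}_s-M^1_s\|\le 2\kappa$ in \eqref{eq:726nn}). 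The verification of Property \ref{proper:2}, the nondegeneracy, the Lipschitz bound for $\rho^2$, and the continuity of $H^2$ all go through as in the paper. Where you genuinely differ is the cost comparison. The paper introduces the flux-weighted rate $\tilde Q^{\kappa}_{xy}(s)$, uses the joint convexity of $(a,b)\mapsto a\ell(b/a)$ (Lemma \ref{lem:biconbex}) together with \eqref{eq:717nn} to compare the resulting $\hat J^\kappa$ with $\tilde J(\rho^1,H^1)$, and then trades $\tilde Q^\kappa$ for $Q_{xy}(M^{1,\kappa})$ via the affine property of $Q$ and the perturbation estimate \eqref{eq:511nn}. You instead expand, on $A_0$, $A\ell(B/A)=B\log B-B\log A-B+A$ with $A=\rho\,Q(M)$, $B=\varrho$, exploit the uniform confinement $A^\epsilon\in[k_Q\cons_1^2,c_Q]$ to make $\log A^\epsilon$ harmless, treat the entropy term via Jensen and the Fubini bound $\int e^{-s}\ell(\varrho^\epsilon_s)\,ds\le \frac{e^\epsilon-1}{\epsilon}\int e^{-s}\ell(\varrho^1_s)\,ds$ combined with Fatou, and handle the rest by convergence of the averages. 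Both routes work; yours avoids the $\tilde Q^\kappa$ versus $Q(M^{1,\kappa})$ comparison (and in fact does not really need the affineness of $Q$ at that point, since continuity of $Q$ and $M^\epsilon\to M^1$ already give $A^\epsilon\to A^1$), at the price of the four-term bookkeeping; note also that for the lemma only the one-sided bound $\limsup_\epsilon \tilde J(\rho^\epsilon,H^\epsilon)\le\tilde J(\rho^1,H^1)$ is needed.

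One small point to tighten: for the term $\int e^{-s}B^\epsilon_s\log A^\epsilon_s\,ds$, plain dominated convergence is not literally available, because the natural dominating function $C\,B^\epsilon$ changes with $\epsilon$ and the running maximum of $\varrho^1$ need not be integrable. But you already have $B^\epsilon\to B^1$ a.e.\ (Lebesgue differentiation) and $\int e^{-s}B^\epsilon_s\,ds\to\int e^{-s}B^1_s\,ds<\infty$ (the same Fubini computation that gives $\flu^\epsilon\to\flu^1$), so Scheff\'e's lemma (equivalently, the generalized dominated convergence theorem with dominating sequence $C\,B^\epsilon$) closes this step immediately; with that citation the argument is complete.
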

\begin{proof}
For $\kappa>0$ define 
$$\rho^{1,\kappa}(s) \doteq \kappa^{-1} \int_{s-\kappa}^s \rho^1(u) du, \; 0 \le s \le T,$$
where we define $\rho^1(u) \doteq \rho^1(0)$ for $u \le 0$.
Also define for $s \in [0,T]$ and $(x,y) \in A_0$
\be \label{eq:504nn}
H^{1,\kappa}_s(x,y) = \frac{\int_{s-\kappa}^s \rho^1_u(x) H^1_u(x,y) du}{\int_{s-\kappa}^s \rho^1_u(x) du},
\ee
where $H^1_u(x,y) \doteq H^1_0(x,y)$ for $u\le 0$. Note that the integral in the numerator is finite from the finiteness of $\tilde J(\rho^1, H^1)$ (cf. \eqref{eq:134nn}).
By construction, $s \mapsto \rho^{1,\kappa}_s(x)$ is Lipschitz continuous and $s\mapsto H^{1, \kappa}_s(x,y)$ is continuous.
Also, with $\cons_1$ as in Lemma \ref{lem:nondeg}, 
\be \label{eq:737nn}
\inf_{s\ge 0} \inf_{x \in S} \rho^{1,\kappa}_s(x) \ge \cons_1,\;
\mbox{ and for all } s \ge 0 \mbox{ and } (x,y) \in A_0, H^{1,\kappa}_s(x,y) \ge \cons_1.
\ee
Define, for $t \ge 0$, $M^{1,\kappa}(t) = e^t \int_t^{\infty} e^{-s} \rho^{1,\kappa}_s ds$.  
Using the fact that $\rho^1_s(x) \le 1$ for all $(s,x) \in \R_+\times S$, it is easy to verify that
\be \label{eq:726nn}
\sup_{s \ge 0}\max_{x\in S} |M^{1,\kappa}_s(x) - M^1_s(x)| \le 2 \kappa.
\ee
This also shows that, with $\gamma^{1, \kappa} = M^{1,\kappa}(0)$,
$$\sup_{x\in S}|\gamma^{1, \kappa}_x - \gamma^1_x| \le 2 \kappa.$$
Let
$$\flu^{1,\kappa, xy} \doteq \int_0^{\infty} e^{-s} \rho^{1, \kappa}_s(x)
H^{1,\kappa}_s(x,y) ds.
$$
Using the inequality for a measurable map $\tau: \R_+ \to \R_+$, that is integrable under the measure $e^{-s} ds$,
\be \label{eq:717nn}
\left|\int_0^{\infty} e^{-s} \frac{1}{\kappa} \int_{s-\kappa}^s \tau(u) du -\int_0^{\infty} e^{-s} \tau(s) ds\right| \le   \kappa \left (\tau(0) + \int_0^{\infty} e^{-s} \tau(s) ds\right),
\ee
where $\tau(s) \doteq \tau(0)$ for $s \le 0$,
it is easy to verify that
$$|\flu^{1,\kappa, xy} - \flu^{1, xy}| \le \kappa (\flu^{1, xy} + \rho^1_0(x) H^1_0(x,y)).
$$
Thus we have
\be
(\gamma^{1, \kappa}, \flu^{1,\kappa}) \to (\gamma^1, \flu^1) \mbox{ as } \kappa \to 0.
\ee
Also, it is easily checked that,
$(\rho^{1, \kappa}, H^{1, \kappa}) \in \tilde \cls_{\gamma^{1, \kappa}, \flu^{1, \kappa}}$, and $M^{1, \kappa} = M^{\rho^{1, \kappa}}$.
We claim that, as $\kappa \to 0$,
$$
\tilde J(\rho^{1,\kappa}, H^{1,\kappa}) \to \tilde J(\rho^{1},H^{1}).
$$
Assuming the claim, the result now follows on taking $\kappa$ suitably small and setting
$(\gamma^2, \flu^2, M^2, \rho^2, H^2) = (\gamma^{1,\kappa}, \flu^{1,\kappa}, M^{1,\kappa}, \rho^{1,\kappa}, H^{1,\kappa})$.

We now prove the claim.
Define for $s\ge 0$ and $(x,y) \in A_0$
$$
\tilde Q^{\kappa}_{xy}(s) \doteq \frac{\int_{s-\kappa}^s \rho^1_u(x) Q_{xy}(M^1_u) du}{\int_{s-\kappa}^s \rho^1_u(x) du}.
$$
Then, using convexity from Lemma \eqref{lem:biconbex},
\begin{align*}
\tilde Q_{xy}^{\kappa}(s) \rho^{1,\kappa}_s(x) \ell \left(\frac{H^{1,\kappa}_s(x,y)}{\tilde Q_{xy}^{\kappa}(s)}\right)
&\le \rho^{1,\kappa}_s(x)\frac{1}{\int_{s-\kappa}^s \rho^1_u(x) du}
\int_{s-\kappa}^s \rho^1_u(x) Q_{xy}(M^1_u)
\ell \left(\frac{H^{1}_u(x,y)}{Q_{xy}(M^1_u)}\right)du \\
&=\frac{1}{\kappa}
\int_{s-\kappa}^s \rho^1_u(x) Q_{xy}(M^1_u)
\ell \left(\frac{H^{1}_u(x,y)}{Q_{xy}(M^1_u)}\right) du.
\end{align*}
Once again using \eqref{eq:717nn}, we have
\begin{align*}
\hat J^{\kappa} &\doteq 
\int_{\R_+ } e^{-s} \sum_{(x,y) \in A_0} \tilde Q^{\kappa}_{xy}(s) \ell\left(\frac{H^{1,\kappa}_s(x,y)}{\tilde Q^{\kappa}_{xy}(s)}\right)\, \rho^{1,\kappa}_s(x) \,  ds
\end{align*}
satisfies
\begin{align*}
|\hat J^{\kappa} -\tilde J(\rho^1, H^1)| \le  \kappa \left(\tilde J(\rho^1, H^1) +
\sum_{(x,y) \in A_0}  \rho^1_0(x)Q_{xy}(M^1_0) \ell\left(\frac{H^{1}_0(x,y)}{Q_{xy}(M^1_0)}\right)\right).
\end{align*}
Note that the  term on the right side converges to $0$ as $\kappa \to 0$.
Thus to complete the proof it suffices to show that
\be \label{eq:801nn}
\limsup_{\kappa \to 0}\left|\hat J^{\kappa}- \tilde J(\rho^{1,\kappa}, H^{1,\kappa})\right| = 0.
\ee
For this, we have from \eqref{eq:726nn}, and the affine property of $Q$, that there is a $\tilde d_1, d_1 \in (0, \infty)$  such that for all $\kappa \in (0,1)$, $s\ge 0$, and $(x,y) \in A_0$
\begin{align*}
 |\tilde Q^{\kappa}_{xy}(s) -
Q_{xy}(M^{1,\kappa}_s)| &\le 
|\tilde Q^{\kappa}_{xy}(s) -
Q_{xy}(M^{1}_s)| + |Q_{xy}(M^{1}_s) -
Q_{xy}(M^{1,\kappa}_s)|\\
& \le \tilde d_1 \frac{\int_{s-\kappa}^s \rho^1_u(x) |Q_{xy}(M^1_u) - Q_{xy}(M^1_s)|  du}{\int_{s-\kappa}^s \rho^1_u(x)} + \tilde d_1 \kappa \le 
d_1\kappa.
\end{align*}
Also, from Assumption \ref{assu:lowbd}(c), and lower bounds on $\rho^1$ and $M^1$ from Lemma \ref{lem:nondeg}, for some $d_2>0$
$$\inf_{s\ge 0} \min_{(x,y) \in A_0}\min\{\tilde Q^{\kappa}_{xy}(s),
Q_{xy}(M^{1,\kappa}_s)\} \ge d_2.$$
We note the following inequality: For $b \in (0,\infty)$ and $a, a' \in (\varpi, \infty)$ for some $\varpi >0$,
\be \label{eq:511nn}
|a\ell(b/a) - a\ell(b/a')| \le \left(\frac{b}{\varpi}+1\right)|a-a'|.
\ee
This shows that, for some $d_3<\infty$, and all $s\ge 0$, $(x,y) \in A_0$,
\begin{equation}\label{eq:758nn}
\rho^{1,\kappa}_s(x)\left|\tilde Q^{\kappa}_{xy}(s) \ell\left(\frac{H^{1,\kappa}_s(x,y)}{\tilde Q^{\kappa}_{xy}(s)}\right) - 
Q_{xy}(M^{1,\kappa}_s) \ell\left(\frac{H^{1,\kappa}_s(x,y)}{Q_{xy}(M^{1,\kappa}_s)}\right)\right| \le d_3(H^{1,\kappa}_s(x,y) +1)\kappa.
\end{equation}
Next, using \eqref{eq:504nn} and the lower bound on $\rho^1$ from \eqref{eq:737nn}, for some $d_4 \in (0,\infty)$, and all $\kappa \in (0,1)$, $s\ge 0$, $(x,y) \in A_0$,
$$H^{1,\kappa}_s(x,y) \le d_4 \frac{1}{\kappa} \int_{s-\kappa}^s  H^1_u(x,y) du.
$$
Thus, using \eqref{eq:717nn} again, for all $\kappa \in (0,1)$
\begin{align*}
\int_0^{\infty} e^{-s} H^{1,\kappa}_s(x,y) ds
&\le d_4 \int_0^{\infty} e^{-s} \frac{1}{\kappa} \int_{s-\kappa}^s  H^1_u(x,y) du\\
&\le 2d_4\int_0^{\infty} e^{-s} H^{1}_s(x,y) ds
+ d_4\kappa H^1_0(x,y) <\infty.
\end{align*}
The last inequality above is a consequence of $\tilde J(\rho^1, H^1)<\infty$ and the boundedness and uniform positivity of $Q_{xy}(M^1_s)$.
Using the above estimate in \eqref{eq:758nn}, integrating both sides of that inequality with respect to $e^{-s} ds$ and sending $\kappa \to 0$, we have \eqref{eq:801nn} and the result follows.
\end{proof}
\section{Lower Bound Proof}
\label{sec:construction}
We will now use the near optimal $(\gamma^2, \flu^2)$ and the associated $(\rho^2, H^2)$ from the previous section to construct a  control sequence $\al^n \in \Theta^n$, that has suitable convergence properties, for use on the right side of \eqref{eq:RepresentationFormula}. Henceforth we will suppress the superscript $2$ in $(M^2, \rho^2, \flu^2, H^2, \gamma^2)$ and simply write $(M, \rho, \flu, H, \gamma)$. 
From Lemma \ref{lem:step2} it follows that
\be \label{eq:basinteg}
\int_0^{\infty} e^{-s} \sum_{(x,y) \in A_0} H_s(x,y) \log H(s,y) ds < \infty.
\ee
Recall the constants $c_Q$, $k_Q$, and $\cons_2$ from \eqref{eq:cqdefn}, Assumption \ref{assu:lowbd}(c), and Lemma \ref{lem:step2}, respectively. Choose $T>0$ such that
\be \label{eq:356nn}
\cons_2^{-1}\int_T^{\infty} e^{-s} \sum_{(x,y) \in A_0} [H_s(x,y) \log H_s(x,y)
+ H_s(x,y) \log^-(k_Q\cons_2/4) + c_Q]ds \le \delta/4.
\ee
Fix $\eps_1 \in (0,1)$ such that
\be
\label{eq:delta8}
4\eps_1 \|h\|_{\infty} \le \delta/8, \mbox{ and }
\eps_1 (\int_0^T e^{-s} \sum_{(x,y) \in A_0}  [H_s(x,y) \log H_s(x,y)
+ H_s(x,y) \log^-(k_Q\cons_2/4) + c_Q] ds) \le \delta/16.
\ee
Using Assumption \ref{assu:lowbd} (d),  we can find $a^* \in (0,\infty)$ such that, with
\be\label{eq:finsig1}
\sigma_1 = \inf\{t\ge 1: L_t(x) \ge a^* \mbox{ for all } x \in S\},\;\;
\PP(\sigma_1 <\infty) \ge (1-\eps_1/2).\ee

From \eqref{eq:finsig1}, we can find $r_1 \in (1,\infty)$ such that
\be
\label{eq:ProbaSigma1r1}
\PP(\sigma_1 > r_1) \le \eps_1.
\ee

Now choose $\eps_0 \in (0,1)$ such that
\be\label{eq1211n}
\eps_0 \le e^{-T} \cons_2/32, \;\; \eps_0e^T\left(\frac{28}{k_Q\cons_2} \sum_{(x,y) \in A_0} \int_0^T e^{-s} (H_s(x,y)+1) ds+ 3 \|h\|_{\mbox{\tiny{lip}}}\right) \le \delta/8.
\ee

Note that, from Lemma \ref{lem:step2}, $M(T) \in \clp_+(S)$ and since $(\rho, H) \in \tilde \cls_{\gamma, \flu}$ and $M = M^{\rho}$, Property \ref{proper:2}(b) holds, i.e.,
$$M(T) = e^T\int_T^{\infty}e^{-s} \rho(s) ds.$$
Since from Property \ref{proper:2}(c) $\sum_{x\in S} \rho_s(x) H_s(x,y)=0$ for all $y \in S$, for a.e. $s\ge 0$, we obtain that, with
$$V(x,y) \doteq \frac{\int_T^{\infty}e^{-s} \rho_s(x)H_s(x,y) ds}
{\int_T^{\infty}e^{-s} \rho_s(x) ds}, (x,y) \in S\times S,$$
$V \in \clk(S)$ satisfies
$\sum_{x\in S} M_T(x)V(x,y)=0$, for all $y\in S$. Furthermore, using Property \ref{proper:2}(b) of $\{H_s\}$, $V(x,y)=0$ for all $(x,y) \in A\setminus A_0$.
Also, from Lemma \ref{lem:step2},
for all $(x,y) \in A_0$
\begin{equation}\label{eq:vbd}
 V(x,y) \ge \cons_2.
\end{equation}

Denote $q\doteq M(T)$. Consider a Markov jump process on $S$ with rate matrix $V$. When this process starts with $x \in S$, we denote the state process by $\{U^x(t)\}$. By an application of the ergodic theorem, we can find $r_2> \tilde r_1 \doteq r_1(2-\eps_0)/\eps_0$ such that
\be\label{eq:520nn}
\sum_{x\in S}\PP\left(  \sup_{t \ge r_2} \|\frac{1}{t} \int_0^t \delta_{U^x(s)} ds - q\| \ge \eps_0 \right) \le \eps_1.\ee

Also, define for $u \in [e^{t_n-T}, e^{t_n}]$, $\check H_u \in \clk(S)$ as
$$\check H_u(x,y) = H_{t_n-\log u}(x,y), \; \mbox{ for } (x,y) \in S\times S.$$

We now define the controlled processes $\{\tilde X^n\}$, the corresponding empirical measure $\{\tilde L^n\}$ and the associated controls $\al^n$.
\begin{construction}
\label{cons:cont}
 Let $\tilde X^n_0= x$ and $\tilde L^n_0 = \delta_x$. Let PRM $\{N_{ij}, (i,j) \in A\}$ and filtered probability space $(\Om, \clf, \PP, \{\clf_t\})$ be as in Sections \ref{sec:2} and \ref{sec:varrep}. Let $\Lambda = e^T$.  
Define the processes $\{\tilde X^n, \tilde L^n, \al^n\}$ recursively as follows.
\begin{enumerate}[(a)]
\item 
On the \textit{good set} $E_1= \{\sigma_1 \le r_1\}$, we define the controlled process $\tilde X^n$
by  taking $\al^n_{ij}(t) = 1$ for all $(i,j) \in A$ and $t \le \sigma_1$.
Also, on the \textit{bad set} $E_1^c= \{\sigma_1 > r_1\}$ we  take $\al^n_{ij}(t) = 1$ for all $(i,j) \in A$ and $t> \sigma_1$. Note that $\PP(E^c_1) \le \eps_1$.

\item 
On the good set $E_1$ we define the process for $t \ge \sigma_1$ using the  Markov chain with rate matrix $V$ with initial condition $\tilde X^n(\sigma_1)$, and which is conditionally independent of $\tilde X^n(\cdot \wedge \sigma_1)$ given $\tilde X^n(\sigma_1)$.
This is done by setting on the set $E_1E_2$, where,
$$E_2 \doteq \{\sup_{t \in [r_2, e^{t_n-T}-\sigma_1]} \|\frac{1}{t} \int_{\sigma_1}^{t+\sigma_1} \delta_{\tilde X(s)} ds - q\| < \eps_0\}.$$
and for $s \in (\sigma_1, e^{t_n-T}]$, $(i,j) \in A$,
$$\al^n_{ij}(s,u) = \begin{cases}
V(i,j)/Q_{ij}(\tilde L^n(s-)) & \mbox{ if } \tilde X^n(s-)=i, Q_{ij}(\tilde L^n(s-)) \neq 0,
u \in [0, Q_{ij}(\tilde L^n(s-))], \; (i,j) \in A_0.\\
1 & \mbox{ otherwise. }
\end{cases} $$
while, on the set $E_1E_2^c$ by  defining
$\al^n_{ij}(s,u)$ by the above formula for $s \in (\sigma_1, \sigma_1+\sigma_2]$ where
$$\sigma_2 \doteq \inf \{t \ge r_2: \|\frac{1}{t} \int_{\sigma_1}^{t+\sigma_1} \delta_{\tilde X^n(s)} ds - q\| \ge \eps_0\} \wedge (e^{t_n-T} - \sigma_1).$$
and set $\al^n_{ij}(s,u)=1$ for $s>\sigma_2$.

Note that, from \eqref{eq:520nn}, $\PP(E_2^c \mid E_1) \le \eps_1$.

\item 
On the good set $E_1E_2$ we define the process $\tilde X^n(t)$ for $t \in (e^{t_n-T}, e^{t_n}]= (T_n/\Lambda, T_n]$ using a time non-homogeneous Markov chain  with rate matrices $\{\check H_{t+T_n/\Lambda}, t \ge 0\}$, that is conditionally independent of 
$\{\tilde X^n(\cdot \wedge e^{t_n-T})\}$ given
$\tilde X^n(e^{t_n-T})$.
This is done by setting on the set $E_1E_2E_3$, where
$$E_3 \doteq \{\sup_{s \in [T_n/\Lambda,T_n]} \|\tilde L^n(s) - L(s)\| < \cons_2/8\},
\; L(s) \doteq M(t_n-\log s), s \in [T_n/\Lambda,T_n],$$
and for $s \in (T_n/\Lambda,T_n]$, $(i,j) \in A$,
$$\al^n_{ij}(s,u) = \begin{cases}
\frac{\check H_s(i,j)}{Q_{ij}(\tilde L^n(s-))} & \mbox{ if } \tilde X^n(s-)=i, Q_{ij}(\tilde L^n(s-)) \neq 0, \mbox{ and }
u \in [0, Q_{ij}(\tilde L^n(s-))], \; (i,j) \in A_0,\\
1 & \mbox{ otherwise. }
\end{cases} $$
while, on the set $E_1E_2 E_3^c$,  defining
$\al^n_{ij}(s,u)$ by the above formula for $s \in (e^{t_n-T}, \sigma_3]$ where
$$\sigma_3 \doteq \inf \{t \in [T_n/\Lambda,T_n]: \|\tilde L^n(s)- L(s)\| \ge \cons_2/8\}.$$
and set $\al^n_{ij}(s,u)=1$ for $s \in (\sigma_3, T_n]$.

\item The processes $\{\tilde X^n, \tilde L^n, \tilde R^n\}$ are given by \eqref{eq:ControlProcess}, \eqref{eq:ContrEmpOccMeas}, and \eqref{eq:ControlFlux}, and these  processes, together with $\{\al^n\}$  are constructed recursively from one jump to the next.
\end{enumerate}
This completes the construction of the processes $\{\tilde X^n_s, \tilde L^n_s, \al^n_s\}$ for $s \in [0, T_n]$.
 \end{construction}

Using the notation introduced in Sections \ref{sec:varrep} and \ref{sec:lapupp} for the controls $\al^n$ defined in
Construction \ref{cons:cont}, and $\{\beta^n, n \in \N\}$ of $\clp(S\times \R_+)$-valued random variables defined as
\be
\label{eq:MeasureConstrProcLower}
\beta^n(dx\, ds) = e^{-s} \delta_{\tilde{Y}^{\text{sc},n}_s}(dx) ds \, ,
\ee
we have, from \eqref{eq:302},
\begin{equation}
\label{eq:RevResTimeEvEmpOccLower}
 \tilde{M}^{\text{sc},n}_{t}
        = \tilde{M}^{\text{sc},n}_{0} + \int_{0}^{t} \tilde{M}^{\text{sc},n}_s ds - 
        \int_0^t e^s \beta^n(\cdot ,  ds), \;  t \ge 0 \, .
\end{equation}
Also from \eqref{eq:RescaledChangeReverseRepresentationFormula}  note that 
\be \label{eq:528nn}
- \frac{1}{T_n} \log \mathbb{E} \left[ e^{- T_n h({L}^n_{T_n}, R_{T_n})} \right] \le  \mathbb{E} \left[h(\tilde{M}^{\text{sc},n}_0, \tilde R^n)+ \sum_{(i,j) \in A} \int_{[0,\infty) \times [0,c]} e^{-s} \ell(\hat{\alpha}_{ij}^{\text{sc},n}(s,u)) \, ds du \right] 
\ee

\subsection{Key Estimates}

In the following, to simplify the notation, we denote
$$
\sum_{(i,j) \in A} \int_{[0,\infty) \times [0,c]} e^{-s} \ell(\hat{\alpha}_{ij}^{\text{sc},n}(s,u)) \, ds du  = J_n, \qquad
h(\tilde{M}^{\text{sc},n}_0, \tilde R^n) + J_n = B_n \, .
$$
Recalling the relation between $\hat{\alpha}^{\text{sc},n}$ and $\alpha^n$, we see  that $J_n$ can be rewritten as
$$J_n = e^{-t_n}\sum_{(i,j) \in A} \int_{[0,T_n] \times [0,c]}  \ell({\alpha}_{ij}^{n}(s,u)) \, ds du .
$$

The  lemmas below will provide us with the key estimates  needed for the lower bound proof. The first lemma controls the contribution of $B_n$ on the set $E_1^c$.
\begin{lemma}
\label{lem:lem1prelim}
$\E[B_n 1_{E_1^c}] \le \|h\|_{\infty}\eps_1$.
\end{lemma}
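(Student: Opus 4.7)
The plan is to exploit the fact that, by Construction \ref{cons:cont}(a), on the bad set $E_1^c$ the control is identically one, which makes the integrated cost $J_n$ vanish on this event.

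More precisely, I would first observe that on $E_1^c$ the definition in part (a) of Construction \ref{cons:cont} prescribes $\alpha^n_{ij}(t,u) = 1$ both for $t \le \sigma_1$ (by the first sentence of (a), which holds on all of $\Omega$) and for $t > \sigma_1$ (by the second sentence of (a), which applies precisely on $E_1^c$). In particular, no further branching into parts (b) or (c) of the construction occurs on $E_1^c$, since those parts are triggered only inside $E_1$. Hence on $E_1^c$ we have $\alpha^n_{ij}(t,u) \equiv 1$ for all $(t,u) \in [0,T_n]\times[0,c]$ and all $(i,j) \in A$.

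Since $\ell(1) = 1\log 1 - 1 + 1 = 0$, the rescaling/time-reversal in \eqref{eq:ControllerResInv} preserves this property, so $\hat\alpha^{\text{sc},n}_{ij}(s,u) = 1$ and $\ell(\hat\alpha^{\text{sc},n}_{ij}(s,u)) = 0$ for all $s,u$ on $E_1^c$. Consequently $J_n \mathbf{1}_{E_1^c} = 0$ almost surely, and therefore
\begin{equation}
B_n \mathbf{1}_{E_1^c} = h(\tilde M^{\text{sc},n}_0, \tilde R^n)\mathbf{1}_{E_1^c}.
\end{equation}
Using $|h| \le \|h\|_\infty$ together with the bound $\PP(E_1^c) \le \eps_1$ from \eqref{eq:ProbaSigma1r1} gives
\begin{equation}
\E[B_n \mathbf{1}_{E_1^c}] \le \|h\|_\infty \PP(E_1^c) \le \|h\|_\infty \eps_1,
\end{equation}
which is the claim. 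There is no genuine obstacle here; the estimate is essentially a bookkeeping consequence of the fact that the ``default'' control value $1$ has zero relative entropy cost, so that only the Lipschitz bound on $h$ contributes on the exceptional event.
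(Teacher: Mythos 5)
Your proposal is correct and follows essentially the same route as the paper: on $E_1^c$ the control is identically $1$ by Construction \ref{cons:cont}(a), so the cost term vanishes since $\ell(1)=0$, and the remaining term is bounded by $\|h\|_\infty \PP(E_1^c) \le \|h\|_\infty \eps_1$ via \eqref{eq:ProbaSigma1r1}. Your added remark on why parts (b)--(c) of the construction cannot intervene on $E_1^c$ is a harmless elaboration of the same argument.
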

\begin{proof}
The left side, using  the definitions of $B_n$ and $E_1^c$, can be written as
\be
\mathbb{E} \left[ \left( h(\tilde{M}_0^{\text{sc},n}, \tilde R^n) + e^{-t_n}\sum_{i,j \in A} \int_{[0,T_n] \times [0,c]}  \ell \left( {\al}_{ij}^{n}(s,u) \right) \, ds \, du \right) 1_{\sigma_1 > r_1} \right] \, .
\ee
From \eqref{eq:ProbaSigma1r1}, the first term on the left side can be bounded as
$$
\mathbb{E} \left[ h(\tilde{M}_0^{\text{sc},n}) \right] \leq \|h\|_{\infty}\eps_1 \, .
$$
Finally, by Construction \ref{cons:cont}(a), $\alpha^n = 1$ on $E_1^c$ and thus the second term on the left side is identically zero. 

\end{proof}
The next lemma will estimate the contribution of $B_n$ on the set $E_1E_2^c$.
\begin{lemma}
\label{lem:lem2prelim}
$$
\limsup_{n\to \infty}\E[B_n1_{E_1E_2^c}] \le \|h\|_{\infty}\eps_1+ \delta/4.
$$
\end{lemma}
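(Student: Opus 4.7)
The plan is to split $\E[B_n 1_{E_1 E_2^c}] \le \|h\|_\infty \PP(E_1 E_2^c) + \E[J_n 1_{E_1 E_2^c}]$. The first piece is immediately bounded by $\|h\|_\infty \eps_1$ via $\PP(E_2^c\mid E_1)\le \eps_1$ from Construction \ref{cons:cont}(b), giving the $\|h\|_\infty \eps_1$ term in the claim; it remains to show $\limsup_n \E[J_n 1_{E_1 E_2^c}] \le \delta/4$. From Construction \ref{cons:cont}(a)--(b), $\al^n=1$ (so $\ell(\al^n)=0$) outside the interval $(\sigma_1,\sigma_1+\sigma_2]$, where $\al^n_{ij}(s,u)=V(i,j)/Q_{ij}(\tilde L^n_{s-})$ on the appropriate rectangle; hence
$$J_n 1_{E_1 E_2^c}\le e^{-t_n}\sum_{(i,j)\in A_0}\int_{\sigma_1}^{\sigma_1+\sigma_2} Q_{ij}(\tilde L^n_s)\,\ell\bigl(V(i,j)/Q_{ij}(\tilde L^n_s)\bigr)\,ds\cdot 1_{E_1 E_2^c}.$$

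I would split this integral at $\sigma_1+r_2$. On $(\sigma_1,\sigma_1+r_2]$ the length is at most $r_2$, and the crude estimate $\tilde L^n_s(x)\ge \sigma_1 a^*/s\ge a^*/(r_1+r_2)$ valid on $E_1$, combined with Assumption \ref{assu:lowbd}(c), yields a uniform lower bound on $Q_{ij}(\tilde L^n_s)$; the integrand is then bounded by a constant independent of $n$, and the $e^{-t_n}$ prefactor kills this contribution. The heart of the argument is the main piece $(\sigma_1+r_2,\sigma_1+\sigma_2]$, where one needs a non-degenerate lower bound on $\tilde L^n_s$. Since $\sigma_1\le r_1$ on $E_1$ and $r_2>r_1(2-\eps_0)/\eps_0$, one gets $\sigma_1/s\le \eps_0/2$, and the definition of $\sigma_2$ forces the sub-trajectory empirical measure $\frac{1}{s-\sigma_1}\int_{\sigma_1}^s\delta_{\tilde X^n_u}\,du$ to lie within $\eps_0$ of $q=M(T)$. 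Combining these with $q_x\ge \cons_2$ from \eqref{eq:713nn} and the smallness of $\eps_0$ from \eqref{eq1211n} gives $\inf_x\tilde L^n_s(x)\ge \cons_2/4$, whence $Q_{ij}(\tilde L^n_s)\ge k_Q\cons_2/4$ on this sub-interval by Assumption \ref{assu:lowbd}(c).

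With this uniform lower bound on $Q$, the pointwise estimate $Q\ell(V/Q)=V\log V-V\log Q+Q-V\le V\log V+V\log^-(k_Q\cons_2/4)+c_Q$, integration over an interval of length at most $e^{t_n-T}$, and multiplication by $e^{-t_n}$ give
$$e^{-T}\sum_{(i,j)\in A_0}\bigl[V(i,j)\log V(i,j)+V(i,j)\log^-(k_Q\cons_2/4)+c_Q\bigr]$$
as the dominant contribution. The hard part will be to translate this into $\delta/4$ via \eqref{eq:356nn}. For this I would exploit that $V(x,y)=\int_T^\infty e^{-s}\rho_s(x)H_s(x,y)\,ds\big/\int_T^\infty e^{-s}\rho_s(x)\,ds$ is a weighted average of $\{H_s(x,y)\}_{s\ge T}$; Jensen's inequality with the convex map $x\mapsto x\log x$ gives $V\log V\le \int_T^\infty e^{-s}\rho_s H_s\log H_s\,ds\big/\int_T^\infty e^{-s}\rho_s\,ds$. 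Bounding the denominator below by $\cons_2 e^{-T}$ via Lemma \ref{lem:step2}, the numerator from above using $\rho_s\le 1$ together with the universal bound $(x\log x)^-\le 1/e$, and handling the $V\log^-(k_Q\cons_2/4)$ and $c_Q$ terms analogously via $\rho_s\le 1$ and $\cons_2\le 1$, converts the display above, up to an $O(e^{-T})$ correction absorbed into the choice of $T$, into $\cons_2^{-1}\int_T^\infty e^{-s}\sum_{(x,y)\in A_0}[H_s\log H_s+H_s\log^-(k_Q\cons_2/4)+c_Q]\,ds\le \delta/4$ by \eqref{eq:356nn}, completing the proof.
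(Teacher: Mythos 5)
Your decomposition and the treatment of the events and time intervals coincide with the paper's proof: the bound $\E[(B_n-J_n)1_{E_1E_2^c}]\le\|h\|_{\infty}\eps_1$, the fact that $\al^n\equiv1$ off $(\sigma_1,\sigma_1+\sigma_2]$, the estimate $\tilde L^n_s(x)\ge a^*/(r_1+r_2)$ on $(\sigma_1,\sigma_1+r_2]$ (killed by $e^{-t_n}$), and the lower bound $\tilde L^n_s(x)\ge\cons_2/4$, hence $Q_{ij}(\tilde L^n_s)\ge k_Q\cons_2/4$, on the main interval are all exactly the paper's steps. The one substantive difference is the order of operations in the final conversion, and that is where there is a gap. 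You first apply the pointwise bound $Q\ell(V/Q)\le V\log V+V\log^-(k_Q\cons_2/4)+c_Q$ and only afterwards try to relate $V\log V$ to $H$ by Jensen. Because $H_s\log H_s$ can be negative, bounding the numerator $\int_T^\infty e^{-s}\rho_s H_s\log H_s\,ds$ via $\rho_s\le1$ costs an additive $(x\log x)^-\le1/e$ per pair, and the subsequent division by the lower bound $\cons_2e^{-T}$ of the denominator is not justified when the (signed) numerator is negative. The net result of your route is a bound of the form $\delta/4+Ce^{-T}$ with $C$ of order $a/(e\cons_2)$, not $\delta/4$. The proposed remedy, ``absorb the $O(e^{-T})$ correction into the choice of $T$,'' is not available as the argument stands: $T$ is already fixed by \eqref{eq:356nn} before this lemma, and the constant $\delta/4$ in the statement is exactly the one produced by that choice (one could strengthen the requirement on $T$ at the outset, but that modifies the given construction).

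The gap disappears if you perform the convexity step before the logarithmic bound, which is what the paper does. Since $b\mapsto Q_{ij}(\tilde L^n_s)\,\ell\bigl(b/Q_{ij}(\tilde L^n_s)\bigr)$ is convex and $V(i,j)$ is the average of $H_u(i,j)$ under the probability measure proportional to $e^{-u}\rho_u(i)\,du$ on $[T,\infty)$, Jensen gives $Q\ell(V/Q)\le$ the corresponding average of $Q\ell(H_u/Q)$; then apply \eqref{eq:753nn} inside the integral with $Q\in[k_Q\cons_2/4,c_Q]$, note that the resulting bracket $H_u\log H_u+H_u\log^-(k_Q\cons_2/4)+c_Q$ is nonnegative, so that $\rho_u\le1$ and the normalization bound $\bigl(\int_T^\infty e^{-u}\rho_u(i)\,du\bigr)^{-1}\le e^T\cons_2^{-1}$ may be applied, and you land exactly on \eqref{eq:356nn} with no leftover term. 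Equivalently, apply Jensen to the full convex map $\beta\mapsto\beta\log\beta+\beta\log^-(k_Q\cons_2/4)+c_Q$ rather than to $\beta\log\beta$ alone. With that modification your argument reproduces the paper's proof.
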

\begin{proof}
Since $\PP(E_2^c\mid E_1) \le \eps_1$,
$\E[(B_n-J_n) 1_{E_1E_2^c}] \le \|h\|_{\infty}\eps_1$.
Also, on $E_1E_2^c$,
writing, for $0 \le t_1 \le t_2 \le T_n$
$$\tilde J^n[t_1, t_2] \doteq \int_{[t_1, t_2] \times [0,c]} \ell({\alpha}^{n}_{ij}(s,u)) \, dsdu,$$
we have 
$\tilde J^n[0, \sigma_1]=0$, since $\al^n_{ij} \equiv 1 $ on $[0, \sigma_1]$.

Also, using the definition of $\sigma_1$, for $s \in [\sigma_1, \sigma_1+r_2]$,
\begin{equation}\label{eq:424n}
\inf_{x\in S}\tilde L^n_s(x) \ge \frac{\sigma_1}{\sigma_1+r_2}
\inf_{x\in S}\tilde L^n_{\sigma_1}(x)
\ge
a^*\frac{\sigma_1}{\sigma_1+r_2} \ge \frac{a^*}{r_1+r_2}.\end{equation}

Using the lower bound on $\rho_s$ from Lemma \ref{lem:step2} we see that
\begin{equation}\label{eq:425nn}
\sup_{(x,y) \in A_0} V(x,y) \le e^T \cons_2^{-1} \sup_{(x,y) \in A_0}\int_0^{\infty} e^{-s} \rho_s(x) H_s(x,y) ds = e^T \cons_2^{-1}\sup_{(x,y) \in A_0}\flu_{xy} \doteq k(T)\end{equation}

We will make use of the following estimate: for $0< r<R<\infty$, 
\begin{equation}\label{eq:crr}
\sup_{(z_1, z_2) \in [r, R]^2} \{z_2 \ell(z_1/z_2)\} \doteq c_{r,R} < \infty.
\end{equation}

From  \eqref{eq:424n}, \eqref{eq:425nn}, \eqref{eq:crr}, \eqref{eq:cqdefn}, Lemma \ref{lem:step2}, and Assumption \ref{assu:lowbd}(c), denoting 
$$v_1' = \min\{a^*k_Q/(r_1+r_2), \cons_2\}, \mbox{ and }  v'_2(T) = \max\{c_Q, k(T)\},$$
$$ Q_{ij}(\tilde L^n_s) \ell \left(\frac{V(i,j)}{Q_{ij}(\tilde L^n_s)}\right) \le c_{v_1', v'_2(T)} \mbox{ for all } (i,j) \in A_0.$$
Consequently,
$$
\tilde J^n[\sigma_1, \sigma_1+r_2] \le a r_2 c_{v_1', v'_2(T)}.
$$

For $s \in [r_2, \sigma_2)$, recalling the lower bound on $q=M(T)$ from Lemma \ref{lem:step2}, the definition of $\sigma_2$, and our choice of $\eps_0$ (see \eqref{eq1211n})
$$
\inf_{x\in S} \frac{1}{s} \int_{\sigma_1}^{\sigma_1+s} \delta_{\tilde X^n_u}(x) du \ge \cons_2/2.
$$
Thus, for any $x \in S$, and $s \in [r_2, \sigma_2)$
\begin{multline}
\tilde L^n_{\sigma_1+s}(x) = \frac{1}{\sigma_1+s} \int_0^{\sigma_1+s} \delta_{\tilde X^n_u}(x) du\\
= \frac{\sigma_1}{\sigma_1+s} \frac{1}{\sigma_1} \int_0^{\sigma_1}  \delta_{\tilde X^n_u}(x) du +
\frac{s}{\sigma_1+s} \frac{1}{s} \int_{\sigma_1}^{\sigma_1+s}  \delta_{\tilde X^n_u}(x) du\\
\ge \frac{s}{\sigma_1+s}\frac{\cons_2}{2}
\ge \frac{r_2}{r_1+r_2}\frac{\cons_2}{2}\ge \frac{\cons_2}{4},
\label{eq:1019nn}
\end{multline}
where the last inequality follows on noting that $r_2>r_1$.

Next, using the convexity property from Lemma \ref{lem:biconbex}, for $(i,j) \in A_0$,
$$Q_{ij}(\tilde L^n_s)\ell \left(\frac{V(i,j)}{Q_{ij}(\tilde L^n_s)}\right)
\le \frac{1}
{\int_T^{\infty}e^{-u} \rho_u(i) du}\int_T^{\infty}e^{-u} \rho_u(i) Q_{ij}(\tilde L^n_s)\ell \left(\frac{H_u(i,j)}{Q_{ij}(\tilde L^n_s)}\right) du.
$$
From \eqref{eq:1019nn} we have that, for $s \in [\sigma_1+r_2, \sigma_1+\sigma_2)$, $\tilde L^n_{s}(x) \ge \cons_2/4$ for all $x \in S$. Combining this with the above display and the observation that for $\alpha, \beta >0$
\be \label{eq:753nn} \al \ell(\beta/\al) \le \beta \log \beta + \beta \log^- \al  + \al,\ee
we get
\begin{align*}
&\sum_{(i,j) \in A_0} Q_{ij}(\tilde L^n_s)\ell \left(\frac{V(i,j)}{Q_{ij}(\tilde L^n_s)}\right)\\
&\le \frac{1}
{\int_T^{\infty}e^{-u} \rho_u(i) ds}\int_T^{\infty}e^{-u} \sum_{(i,j) \in A_0}\rho_u(i) [H_u(i,j) \log H_u(i,j)
+ H_u(i,j) \log^-(k_Q\cons_2/4) + c_Q] du\\
&\le 
e^{T} \cons_2^{-1}\int_T^{\infty}e^{-u} \sum_{(i,j) \in A_0}\rho_u(i) [H_u(i,j) \log H_u(i,j)
+ H_u(i,j) \log^-(k_Q\cons_2/4) + c_Q] du \le e^{T} \delta/4,
\end{align*}
where the last inequality follows from \eqref{eq:356nn}.

Consequently,
\be \label{eq:755nn}
e^{-t_n}\tilde J^n[\sigma_1+r_2, \sigma_1+\sigma_2] \le 
 e^{-t_n} \int_{[\sigma_1+r_2, e^{(t_n-T)}]} (e^{T} \delta/4) ds
 \le \delta/4.\ee

Since on $E_1E_2^c$, $J^n[\sigma_1+\sigma_2, \infty]=0$, we have,
combining the above estimates,
\be \label{eq:323n}
\limsup_{n\to \infty}\E[J_n1_{E_1E_2^c}] \le \limsup_{n\to \infty} \eps_1[e^{-t_n} ar_2 c_{v_1', v'_2(T)} + \delta/4] \le  \delta/4.
\ee
We thus have
$$
\limsup_{n\to \infty}\E[B_n1_{E_1E_2^c}] \le \|h\|_{\infty}\eps_1 + \delta/4
$$
and the result follows.
\end{proof}

Recall the sequence $\beta^n$ of $\clp(S\times \R_+)$ valued random variables defined in \eqref{eq:MeasureConstrProcLower} and the process $\tilde{M}^{\text{sc},n}$ defined by \eqref{eq:RevResTimeEvEmpOccLower}.
Since $S$ is compact and $[\beta^n]_2(ds) = e^{-s} ds$, we see that $\{\beta^n\}$ is a tight sequence in $\clp(S\times \R_+)$. The tightness of $\{\tilde M^{\text{sc},n}\}$ in $C([0,T]: \clp(S))$ follows as in the proof of Lemma \eqref{lem:tight}. The following lemma gives an important property of the weak limit points of $(\beta^n, \tilde{M}^{\text{sc},n})$.

\begin{lemma}
\label{lem:lem7.4}
Suppose $(\beta^n, \tilde{M}^{\text{sc},n})$ converge weakly along a subsequence to $(\beta^*, M^*)$. Then, on the probability space where 
$(\beta^*, M^*)$ are defined, there is a measurable set $\Om_0$ with $\PP(\Om_0) \ge 1- 2\eps_1$, such that for all $\om \in \Om_0$, the following hold: For a.e. $s \in [0,T]$, $\beta^*_s = \rho_s$, and
\begin{equation}\label{supbd254}
\sup_{0\le t \le T} \|M_t- M^*_t\| \le e^T\|M_T- M^*_T\|.
\end{equation}
\end{lemma}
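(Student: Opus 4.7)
The plan is to: (i) extract $\Omega_0$ via Skorohod representation applied to the event $E_1\cap E_2$; (ii) show by a Dynkin--martingale argument that, on $\Omega_0$, the limit $\beta^*$ satisfies the stationarity relation $\beta^*_u H_u=0$ for a.e.\ $u$; (iii) use the nondegeneracy and irreducibility afforded by Lemma \ref{lem:step2} and Assumption \ref{assu:lowbd}(c) to deduce $\beta^*_u=\rho_u$; and (iv) obtain the sup bound via an explicit solution of the limiting linear equation for $M^*-M$.

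For step (i), by Skorohod's theorem, pass along the given subsequence to a probability space on which $(\beta^{n_k},\tilde M^{\text{sc},n_k})\to(\beta^*,M^*)$ almost surely, and take $\Omega_0$ to be the image of $E_1\cap E_2$ intersected with the a.s.\ convergence set. Since \eqref{eq:ProbaSigma1r1} gives $\PP(E_1^c)\le\eps_1$ and \eqref{eq:520nn} gives $\PP(E_2^c\mid E_1)\le\eps_1$, we obtain $\PP(\Omega_0)\ge1-2\eps_1$.

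For step (ii), fix $f\in\B(S)$. By Construction \ref{cons:cont}(c), on $\Omega_0$ the process $\tilde X^n_t$ for $t\in(T_n/\Lambda,T_n\wedge\sigma_3]$ has rate matrix $\check H_t=H_{t_n-\log t}$, so the Dynkin identity gives
$$f(\tilde X^n_{T_n\wedge\sigma_3})-f(\tilde X^n_{T_n/\Lambda})=\int_{T_n/\Lambda}^{T_n\wedge\sigma_3}\sum_{y\ne\tilde X^n_s}(f(y)-f(\tilde X^n_s))\check H_s(\tilde X^n_s,y)\,ds+\mathcal{M}^n,$$
with $\mathcal{M}^n$ a square-integrable martingale. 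A second-moment estimate along the lines of \eqref{eq:1043}, using $\tilde J(\rho,H)<\infty$, yields $e^{-t_n}\mathcal{M}^n\to 0$ in $L^2$; the boundary terms are $O(e^{-t_n})$. The change of variables $t=e^{t_n-u}$ together with multiplication by $e^{-t_n}$ converts the drift into $\int_{I_n}e^{-u}(H_u f)(\tilde Y^{\text{sc},n}_u)\,du$, where $I_n\subseteq[0,T]$ is the image of $(T_n/\Lambda,T_n\wedge\sigma_3]$. On $E_3$ one has $I_n=[0,T]$; on $E_1E_2\cap E_3^c$ the residual contribution from $[0,T]\setminus I_n$ must be shown to vanish (see the main obstacle below). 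Rewriting the drift as $\int(H_u f)(x)\,\beta^n(dx,du)$ using $\beta^n(dx,du)=e^{-u}\delta_{\tilde Y^{\text{sc},n}_u}(dx)\,du$ and invoking the continuity of $u\mapsto H_u$ (Lemma \ref{lem:step2}), passage to the weak limit delivers
$$\int_0^T\Big(\sum_{y\ne x}(f(y)-f(x))H_u(x,y)\Big)\beta^*(dx,du)=0\quad\text{a.s.\ on }\Omega_0.$$

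For step (iii), disintegrating $\beta^*(dx,du)=\beta^*_u(dx)\,e^{-u}\,du$ and varying $f\in\B(S)$, one deduces $\beta^*_u H_u=0$ for a.e.\ $u\in[0,T]$. Since $H_u(x,y)\ge\cons_2$ on $A_0$ (Lemma \ref{lem:step2}) and $A_0$ is irreducible by Assumption \ref{assu:lowbd}(c), $H_u$ admits a unique stationary distribution, which by Property \ref{proper:2}(c) is $\rho_u$; hence $\beta^*_u=\rho_u$ a.e.\ on $\Omega_0$. For step (iv), passing to the weak limit in \eqref{eq:RevResTimeEvEmpOccLower} (using that $(s,x)\mapsto e^s f(x)1_{[0,t]}(s)$ is bounded continuous on $[0,T]\times S$) gives $M^*_t=M^*_0+\int_0^t M^*_s\,ds-\int_0^t\rho_s\,ds$; combining with $\dot M_t=M_t-\rho_t$ yields $M^*_t-M_t=(M^*_0-M_0)e^t$, whence $\sup_{0\le t\le T}\|M_t-M^*_t\|=\|M_T-M^*_T\|\le e^T\|M_T-M^*_T\|$. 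The technically most delicate point is the handling of the residual interval in step (ii) when $E_3^c$ occurs inside $E_1E_2$: using the local equilibration provided by $E_2$ (which puts $\tilde L^n$ within $\eps_0$ of $q=M_T$ at the start of phase 3) and the defining property of $\sigma_3$, one must show $1_{E_3^c\cap E_1E_2}\to 0$ in probability so the residual contributes $o(1)$ in the coupling, ensuring the invariance holds over the full interval $[0,T]$.
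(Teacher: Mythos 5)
Your overall route is the same as the paper's: the Dynkin/martingale identity for the phase-3 dynamics, a second-moment bound as in \eqref{eq:1043} (using \eqref{eq:basinteg}), the change of variables turning the drift into an integral against $\beta^n$, passage to the weak limit using the continuity of $s\mapsto H_s$ from Lemma \ref{lem:step2}, identification of $\beta^*_s$ with $\rho_s$ via the nondegeneracy/irreducibility of $H_s$ (Lemma \ref{lem:step2}, Assumption \ref{assu:lowbd}(c)) together with Property \ref{proper:2}(c), and finally the linear ODE comparison between $M^*$ and $M$ (your explicit solution $M^*_t-M_t=(M^*_0-M_0)e^t$ is equivalent to the paper's Gronwall step and gives \eqref{supbd254}). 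Steps (i), (iii) and (iv) are fine, modulo the standard device of including the indicator $1_{E_1E_2}$ as an extra coordinate when you invoke Skorohod, so that ``the image of $E_1\cap E_2$'' is meaningful on the new space; the paper's $\Om_0$ is obtained in exactly this spirit, with $\PP(\Om_0^c)\le \PP((E_1E_2)^c)\le 2\eps_1$.

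The genuine gap is the one you flag yourself and then propose to close by an argument that cannot be used here: the residual piece of the drift integral over $[0,T]\setminus I_n$ on $E_1E_2E_3^c$. Your suggested fix --- ``show $1_{E_1E_2\cap E_3^c}\to 0$ in probability'' --- is unavailable for two reasons. First, no such convergence to zero holds or is ever established; what is eventually proved is only $\limsup_n \PP(E_1E_2E_3^c)\le 2\eps_1$, see \eqref{eq:cgcez256}. Second, and decisively, that bound is derived in Lemma \ref{lem:lem3prelim} precisely as a consequence of the present lemma, through \eqref{supbd254} and the choice of $\eps_0$ in \eqref{eq1211n}; invoking it inside the proof of Lemma \ref{lem:lem7.4} is circular. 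Even granting it, you would only obtain the invariance relation off a further exceptional set, degrading the bound $\PP(\Om_0)\ge 1-2\eps_1$ required by the statement. The paper's proof does not route through any control of $E_3^c$: there the Dynkin identity \eqref{eq:254n} with generator $\check H_s$ is applied on all of $E_1E_2$ (Construction \ref{cons:cont}(c) is read as saying that on $E_1E_2$ the phase-3 evolution is the time-inhomogeneous chain with rates $\check H$), so the drift integral covers the whole reversed-time interval $[t_0,T]$ and the only exceptional probability absorbed into $\Om_0^c$ is $\PP((E_1E_2)^c)$. If you insist on tracking the switch of the control to $1$ after $\sigma_3$, you must handle the residual differently (e.g.\ apply the Dynkin identity up to the stopping time $\sigma_3\wedge T_n$ and combine the invariance it yields on the corresponding reversed-time subinterval with a continuation argument reaching $t_0$), not by asserting that $E_1E_2E_3^c$ is asymptotically negligible.
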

\begin{proof}
Disintegrating $\beta^*(dx\, ds) = \beta^*_s(dx) e^{-s} ds$, we have from \eqref{eq:RevResTimeEvEmpOccLower} that
\begin{align} 
 {M}^{*}_{t}
       = {M}^{*}_{0} + \int_{0}^{t} {M}^{*}_s ds - 
        \int_{0}^t \beta^*_s(dx) ds . \label{eq:212n}
\end{align}
Furthermore, for $t \in [e^{t_n-T}, e^{t_n}]$, using \eqref{eq:ControlProcess}, we can write, on $E_1E_2$,
\begin{align}
f(\tilde{X}^n_t) &= f(\tilde{X}^n_{e^{t_n-T}}) + \clm^n(t) + \sum_{(i,j) \in \sos} 
        \int_{e^{t_n-T}}^t \check H_s(i,j) (f(j)-f(i)) 1_{[\tilde{X}^n_{s}=i]} ds\\
        &= f(\tilde{X}^n_{e^{t_n-T}}) + \clm^n(t) + \int_{e^{t_n-T}}^t \tilde \cll^n_s f(\tilde X^n_s) ds,\label{eq:254n}
    \end{align}
    where
    \be
\tilde \cll^n_s f(i) = \sum_{j: j \neq i} \check H_s(i,j) (f(j)-f(i))
    \ee
    and
    \begin{align}
        \clm^n(t) = \sum_{(i,j) \in \sos} \int_{[e^{t_n-T},t] \times [0,c] \times \mathbb{R}_+} (f(j)-f(i)) 1_{[\tilde{X}^n_{s-}=i]} 1_{[0, Q_{ij}(\tilde{L}^n_{s-})]}(u) 1_{[0, \alpha^n_{ij}(s,u)]}(r) N^c_{ij}(ds\, du\, dr) \, ,
    \end{align}
with $N_{ij}^c(ds\,du\,dr) = N_{ij}(ds\,du\,dr) - ds\,du\,dr$. 

By a change of variable calculation as in \eqref{eq:1043}, and using \eqref{eq:basinteg}, we have that, with $T_n^0 = e^{t_n-t_0}$, and $t_0 \in [0,T]$, 
\begin{align*}
\frac{1}{(T_n^0)^2} \E(\clm^n(T^0_n))^2
\le c_1 e^{-(t_n-t_0)}e^{t_0}
\sum_{(i,j) \in A_0} \E \int_{[t_0, \infty)} e^{-w} 
[H_w(i,j)+1] dw \to 0, \mbox{ as } n \to \infty .
\end{align*}
Thus by a calculation similar to that one leading to \eqref{eq:803n}, we get that, for $t_0 \in [0,T]$,
$$\frac{1}{T_n} \int_{e^{t_n-T}}^{T_n^0}\tilde \cll^n_s f(\tilde X^n_s) ds \to 0, \mbox{ in probability . }
$$
By change of variables, it then follows that, as $n\to \infty$,
\be
1_{E_1E_2}\int_{S  \times [t_0, T]}\sum_{y: y \neq x} (f(y)-f(x))
 H_s(x,y)
\beta^n(dx\, ds) = 1_{E_1E_2}\frac{1}{T_n} \int_{e^{t_n-T}}^{T_n^0}\tilde \cll^n_s f(\tilde X^n_s) ds \to 0. \label{eq:410nn}
\ee
By  convergence of $\beta^n \to \beta^*$, the continuity of $s\mapsto H_s(x,y)$, and since $\PP(E_1E_2) \ge 1-2\eps_1$, we now have, that for some $\Om_0$ with $\PP(\Om_0) \ge 1-2\eps_1$, on $\Om_0$, for all $t_0 \in [0,T]$ and all $f: S \to \R$
\be
\int_{S  \times [t_0, T]}\sum_{y: y \neq x} (f(y)-f(x))
 H_s(x,y)
\beta^*(dx\, ds) = \int_{S \times [t_0, T]} \sum_{y: y \neq x} (f(y)-f(x))
 H_s(x,y) \beta^*_s(dx) e^{-s} ds = 0.
 \ee
Consequently, on $\Om_0$, for a.e. $s \in [0,T]$,
$$\sum_{x\in S}\beta^*_s(x) H_s(x,y) = 0, \; \mbox{ for all } y \in S.
$$
From the irreducibility of $H_s$ that follows from the lower bound on $H_s$ in Lemma \ref{lem:step2} and since from Property \ref{proper:2}(c) $\sum_{x\in S}\rho_s(x)H_s(x,\cdot) = 0$, we have that, on $\Om_0$,
$\beta^*_s = \rho_s$.

Finally, note  that $M$ given by Lemma \ref{lem:step2} solves
\begin{align} 
 {M}_{t}
       = {M}_{T} + \int_{t}^{T} {M}_s ds - 
        \int_{t}^T \rho_s ds 
\end{align}
and from \eqref{eq:212n}, on $\Om_0$, $M^*$ solves the same equation with $M$ replaced with $M^*$. Thus we have by Gronwall's lemma that the inequality in \eqref{supbd254} is satisfied on $\Om_0$. The result follows.
\end{proof}
We now control the contribution of $B_n$ on the set $E_1E_2E_3^c$.
\begin{lemma}
\label{lem:lem3prelim}
\be
\limsup_{n\to \infty}\E[B_n1_{E_1E_2E_3^c}] \le \delta/2.
\ee
\end{lemma}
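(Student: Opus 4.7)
The plan is to decompose $B_n = h(\tilde{M}^{\text{sc},n}_0, \tilde R^n) + J_n$, estimate each piece on $E_1 E_2 E_3^c$, and then show that $\PP(E_1 E_2 E_3^c)$ is itself eventually of order $\eps_1$. For the integral cost $J_n$, I would split the horizon $[0, T_n]$ according to Construction \ref{cons:cont}. On $[0,\sigma_1]$ and $(\sigma_3, T_n]$ the control equals $1$, so those pieces contribute nothing. On $[\sigma_1, e^{t_n-T}]$, the same estimates as in Lemma \ref{lem:lem2prelim} apply verbatim (with $\sigma_2$ replaced by the deterministic endpoint $e^{t_n-T} - \sigma_1$, since on $E_1 E_2$ there is no early stopping): using \eqref{eq:1019nn} one has $\tilde L^n_s(x) \ge \cons_2/4$ on $[\sigma_1+r_2, e^{t_n-T}]$, which yields $e^{-t_n}\tilde J^n[\sigma_1, e^{t_n-T}] \le \delta/4 + o(1)$.

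For the piece $[e^{t_n-T}, \sigma_3]$, the definition of $\sigma_3$ together with the lower bound $L(s)= M(t_n-\log s) \ge \cons_2$ from Lemma \ref{lem:step2} gives $\tilde L^n_s(x) \ge 7\cons_2/8$, and hence $Q_{ij}(\tilde L^n_s) \in [7k_Q\cons_2/8,\, c_Q]$ on $A_0$. Substituting $s = e^{t_n-w}$ (so that $\check H_{e^{t_n-w}}(i,j) = H_w(i,j)$ and $e^{-t_n}\,ds = e^{-w}\,dw$) and applying \eqref{eq:753nn} with $\alpha = Q_{ij}(\tilde L^n_{e^{t_n-w}})$ bounds the rescaled cost by the deterministic constant
\begin{equation*}
K^\star \doteq \int_0^T e^{-w}\! \sum_{(i,j)\in A_0}\! \big[H_w(i,j)\log H_w(i,j) + H_w(i,j)\log^-(k_Q\cons_2/4) + c_Q\big]\,dw,
\end{equation*}
which is finite by \eqref{eq:134nn} and Lemma \ref{lem:step2}. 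Hence $J_n\cdot 1_{E_1 E_2 E_3^c} \le (\delta/4 + K^\star + o(1))\,1_{E_1 E_2 E_3^c}$.

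The core step is showing $\limsup_n \PP(E_1 E_2 E_3^c) \le 4\eps_1$. I would argue subsequentially: by tightness (as in Lemma \ref{lem:tight}), extract a further subsequence along which $(\beta^n, \tilde M^{\text{sc},n}) \to (\beta^*, M^*)$ weakly. Lemma \ref{lem:lem7.4} provides $\Omega_0$ with $\PP(\Omega_0) \ge 1-2\eps_1$ on which $\sup_{t\in[0,T]}\|M_t - M^*_t\| \le e^T\|M_T - M^*_T\|$. Separately, on $E_1 E_2$ the ergodic estimate defining $E_2$, combined with $\sigma_1\le r_1$, yields $\|\tilde M^{\text{sc},n}_T - M_T\| = \|\tilde L^n_{e^{t_n-T}} - q\| \le \eps_0 + 2r_1/e^{t_n-T}$. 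Passing to the limit via Skorohod coupling, $\|M^*_T - M_T\| \le \eps_0$ on $\Omega_0 \cap (\text{limit of } E_1 E_2)$, a set of probability at least $1 - 4\eps_1$. By the choice \eqref{eq1211n}, $e^T\eps_0 \le \cons_2/32 < \cons_2/8$, so on this set $\sup_{t\in[0,T]}\|M^*_t - M_t\| < \cons_2/8$, i.e.\ $E_3$ holds in the limit. Hence $\liminf_n \PP(E_1 E_2 E_3) \ge 1 - 4\eps_1$.

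Combining the two steps and using \eqref{eq:delta8},
\begin{equation*}
\limsup_n \E[B_n 1_{E_1 E_2 E_3^c}] \le (\|h\|_\infty + \delta/4 + K^\star)\cdot 4\eps_1 \le \delta/8 + \eps_1\delta + \delta/4 \le \delta/2,
\end{equation*}
after absorbing the additional smallness $\eps_1\delta \le \delta/8$ into the choice of $\eps_1$. The main obstacle is the middle step: the exponential factor $e^T$ in the Gronwall bound of Lemma \ref{lem:lem7.4} (coming from the backward evolution of $M^*$) must be closed against the tracking threshold $\cons_2/8$ in the definition of $E_3$, and this is precisely what the condition $\eps_0 \le e^{-T}\cons_2/32$ in \eqref{eq1211n} is designed to enforce.
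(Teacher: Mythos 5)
Your proposal follows essentially the same route as the paper: the same decomposition of $B_n$, the same segment-by-segment cost estimates (zero cost on $[0,\sigma_1]$ and $(\sigma_3,T_n]$, the Lemma \ref{lem:lem2prelim}-type bound $\delta/4$ on the middle segment, and the bound via \eqref{eq:753nn} and the nondegeneracy of $Q$ on $[e^{t_n-T},\sigma_3]$), and the same key step of controlling $\PP(E_1E_2E_3^c)$ through the weak limit, Lemma \ref{lem:lem7.4}, and the choice of $\eps_0$ in \eqref{eq1211n}. Two small points where the paper is tighter: instead of intersecting with ``the limit of $E_1E_2$'' (which is informal, since $E_2$ depends on $n$ through its time window and a Skorohod coupling of these events needs justification), the paper uses \eqref{eq:227nn} to replace $E_1E_2$ by the closed condition $\|\tilde M^{\text{sc},n}_T-q\|\le 2\eps_0$ before passing to the limit, which yields $\limsup_n\PP(E_1E_2E_3^c)\le 2\eps_1$ rather than your $4\eps_1$; and with that sharper bound the constants fixed in \eqref{eq:delta8} already close the final arithmetic, whereas your accounting needs the additional requirement $\eps_1\le 1/8$ (which you flag, and which is harmless but is not part of the paper's choice of constants).
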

\begin{proof}
We claim that
\begin{equation}\label{eq:cgcez256}
\limsup_{n\to \infty} \PP(E_1E_2E_3^c) \le 2\eps_1.
\end{equation}
Let $(\beta^*, M^*)$ be a weak limit point of $(\beta^n, M^{\text{sc},n}\})$.
By a standard subsequential argument we can assume without loss of generality that the convergence holds along the whole sequence.

Noting that,
\begin{align*}
 &\tilde{M}^{\text{sc},n}_{T} = e^{-t_n+T} \int_0^{e^{t_n-T}} \delta_{\tilde X^n_s} ds\\
 &=\sigma_1e^{-t_n+T} \frac{1}{\sigma_1}\int_0^{\sigma_1} \delta_{\tilde X^n_s} ds
 + e^{-t_n+T}(e^{t_n-T} - \sigma_1)\frac{1}{e^{t_n-T} - \sigma_1} \int_{\sigma_1}^{e^{t_n-T}}\delta_{\tilde X^n_s} ds,
\end{align*}
we see that, on $E_1E_2$, with $n$ sufficiently large,
\be \label{eq:227nn}
|\tilde{M}^{\text{sc},n}_{T} - q| \le 
e^{-t_n+T}\sigma_1 + e^{-t_n+T}(e^{t_n-T}-\sigma_1)\eps_0 + \sigma_1e^{-t_n+T} \le \eps_0 + 2r_1e^{-t_n+T} \le 2\eps_0.\ee

Thus, noting that
$$E_3^c = \{\sup_{s \in [0,T]} |\tilde{M}^{\text{sc},n}_{s} - M_s| \ge \cons_2/8\},
$$
we have, taking the limit along the convergent subsequence, with $\Om_0$ as in Lemma \ref{lem:lem7.4},
\begin{align*}
\limsup_{n\to \infty} \PP(E_1E_2E_3^c) &\le \limsup_{n\to \infty}\PP(\sup_{s \in [0,T]}|\tilde{M}^{\text{sc},n}_{s} - M_s| > \cons_2/8, |\tilde{M}^{\text{sc},n}_{T} - q| \le 2\eps_0)\\
&\le \PP(\sup_{s \in [0,T]} |{M}^{*}_{s} - M_s| \ge \cons_2/8, |{M}^{*}_{T} - M_T| \le 2\eps_0)\\
&\le  \PP(\sup_{s \in [0,T]} |{M}^{*}_{s} - M_s| \ge \cons_2/8, |{M}^{*}_{T} - M_T| \le 2\eps_0; \Om_0) + \PP(\Om_0^c) = \PP(\Om_0^c) \le 2\eps_1,
\end{align*}
where the  inequality on the second line uses the fact that $M_T=q$ and the last equality is a consequence of Lemma \ref{lem:lem7.4} (see \eqref{supbd254}) and the choice of $\eps_0$ in \eqref{eq1211n}.
This proves \eqref{eq:cgcez256}.
Recalling our condition on $\eps_1$ from \eqref{eq:delta8},
\be \label{eq:255nn}
\limsup_{n\to \infty}\E[(B_n-J_n) 1_{E_1E_2E_3^c}] \le \|h\|_{\infty}\limsup_{n\to \infty} \PP(E_1E_2E_3^c) \le 2\eps_1\|h\|_{\infty} \le \delta/16.
\ee

Next, exactly as in the proof of Lemma \ref{lem:lem2prelim}, 
\be \label{eq:511nn}
\mbox{ on } E_1E_2E_3^c,\;
\tilde J^n[0,\sigma_1]=0, \;  \tilde J^n[\sigma_1, \sigma_1+r_2] \le a r_2 c_{v_1', v'_2(T)}, \mbox{ and }
\tilde J^n[\sigma_1+r_2, e^{t_n-T}] \le \delta e^{t_n}/4.
\ee
Also, on $E_1E_2E_3^c$, $\tilde J^n[\sigma_3, \infty]=0$.

Now we estimate $\tilde J^n[e^{t_n-T},\sigma_3]$.  By a change of variables, and with
$\vartheta = t_n-\log \sigma_3$, on $E_1E_2E_3^c$,
$$
e^{-t_n} \tilde J^n[e^{t_n-T},\sigma_3] = \sum_{(i,j) \in A_0} \int_{0}^{\vartheta} e^{-s} 
 1_{\{\tilde{Y}^{\text{sc},n}_s=i\}} Q_{ij}(\tilde{M}^{\text{sc},n}(s)) 
 \ell\left(H_s(i,j)/Q_{ij}(\tilde{M}^{\text{sc},n}(s)) \right) \, ds 
$$
On $[0, \vartheta]$, using the definition of $\sigma_3$ and the lower bound  on $M(s)$ from Lemma \ref{lem:step2}, $\tilde{M}^{\text{sc},n}_s(x) \ge  \cons_2/4$ for all $x \in S$.
Thus, 
$$\inf_{s \in [0,\vartheta]} \inf_{(x,y) \in A_0} Q_{xy}(\tilde{M}^{\text{sc},n}(s)) \ge k_Q \cons_2/4.$$
Noting that $\vartheta \le T$, we can bound using \eqref{eq:753nn} and \eqref{eq:basinteg}
$$
e^{-t_n} \tilde J^n[e^{t_n-T},\sigma_3] \le \int_0^T e^{-s} \sum_{(i,j) \in A_0}  [H_s(i,j) \log H_s(i,j)
+ H_s(i,j) \log^-(k_Q\cons_2/4) + c_Q] ds <\infty.
$$
This, in view of our choice of $\eps_1$ in \eqref{eq:delta8}, shows that
\begin{align*}
&\limsup_{n\to \infty}\E[1_{E_1E_2E_3^c}e^{-t_n} \tilde J^n[e^{t_n-T},\sigma_3]]\\
&\le
\limsup_{n\to \infty} (\int_0^T e^{-s} \sum_{(i,j) \in A_0}  [H_s(i,j) \log H_s(i,j)
+ H_s(i,j) \log^-(k_Q\cons_2/4) + c_Q] ds)\PP(E_1E_2E_3^c) \le \delta/16.
\end{align*}

Combining these, we have, 
\be \label{eq:1213nn}
\limsup_{n\to \infty}\E[J_n1_{E_1E_2E_3^c}] \le \limsup_{n\to \infty} (\eps_1[e^{-t_n} ar_2 c_{v_1',v'_2(T)} + \delta/4] + \E[1_{E_1E_2E_3^c}e^{-t_n} \tilde J^n[e^{t_n-T},\sigma_3] ) \le \delta/4 + \delta/16.
\ee
This, together with \eqref{eq:255nn} gives
\be
\limsup_{n\to \infty}\E[B_n1_{E_1E_2E_3^c}] \le  \delta/4 + \delta/8.
\ee
The result follows.
\end{proof}
Finally, we control $B_n$ on $E_1E_2E_3$.
\begin{lemma}
\label{lem:lem4prelim}
\begin{align*}
\limsup_{n\to \infty} \E[B_n 1_{E_1E_2E_3}]
&\le 5\delta + \inf_{\gamma \in \clp(S)} [h(\gamma) + I(\gamma)].
\end{align*}
\end{lemma}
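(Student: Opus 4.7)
The plan is to show that, on the good set $E_1E_2E_3$, the control $\alpha^n$ from Construction \ref{cons:cont} produces state and flux processes whose cost matches $h(\gamma,\flu)+\tilde J(\rho,H)$ up to $O(\delta)$, which by \eqref{eq:506n} and Lemma \ref{lem:ratesame} is within $3\delta$ of the infimum we are after. I would break $J_n=e^{-t_n}\tilde J^n[0,T_n]$ into four pieces exactly as in Lemmas \ref{lem:lem2prelim} and \ref{lem:lem3prelim}: the initial segment $[0,\sigma_1]$ (cost zero by construction), the ``warm-up'' $[\sigma_1,\sigma_1+r_2]$ (deterministically bounded, so $O(r_2/T_n)\to 0$), the ``training'' segment $[\sigma_1+r_2,e^{t_n-T}]$ (bounded by $\delta/4$ exactly by the Jensen/convexity argument that gave \eqref{eq:755nn}, now valid under the stronger occupation bound $\inf_x \tilde L^n_s(x)\ge \cons_2/4$ enforced by $E_2$), and the ``tracking'' segment $[e^{t_n-T},T_n]$, which carries the principal contribution.

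For the tracking segment I would perform the time change $s=e^{t_n-v}$ used throughout Section~\ref{sec:varrep} to rewrite
\begin{equation*}
e^{-t_n}\tilde J^n[e^{t_n-T},T_n]
=\int_{S\times[0,T]}\sum_{(i,j)\in A_0} 1_{\{x=i\}}\,Q_{ij}(\tilde M^{\text{sc},n}_v)\,\ell\!\left(\frac{H_v(i,j)}{Q_{ij}(\tilde M^{\text{sc},n}_v)}\right)\beta^n(dx\,dv),
\end{equation*}
with $\beta^n$ the measure defined in \eqref{eq:MeasureConstrProcLower}. Pass to a weak limit point $(\beta^*,M^*)$ of $(\beta^n,\tilde M^{\text{sc},n})$ along a subsequence. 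By Lemma~\ref{lem:lem7.4}, on a set $\Om_0$ of probability at least $1-2\eps_1$ we have $\beta^*_v(dx)=\rho_v(dx)$ for a.e.\ $v\in[0,T]$, and combining \eqref{supbd254} with $|\tilde M^{\text{sc},n}_T-q|\le 2\eps_0$ (cf.\ \eqref{eq:227nn}) and the choice of $\eps_0$ in \eqref{eq1211n} one gets $\sup_{v\in[0,T]}|M^*_v-M_v|$ arbitrarily small. Because $E_3$ forces $\tilde M^{\text{sc},n}_v\ge \cons_2/8$ componentwise, the integrand is bounded from below away from the singularity of $\ell$, and the continuity of $Q$ together with the lower semicontinuity and superlinearity of $\ell$ (already used in the proof of Lemma~\ref{lem:char}) let us conclude
\begin{equation*}
\liminf_{n\to\infty}\E\!\left[e^{-t_n}\tilde J^n[e^{t_n-T},T_n]\,1_{E_1E_2E_3}\right]
\ge \E[1_{\Om_0}]\int_0^T e^{-v}\!\sum_{(i,j)\in A_0}\rho_v(i)Q_{ij}(M_v)\,\ell\!\left(\tfrac{H_v(i,j)}{Q_{ij}(M_v)}\right)dv.
\end{equation*}
An \emph{upper} estimate of the same type (which is what we need) is available for the identical integrand on $\Om_0$ by the same continuity; outside $\Om_0$ the $\ell$-cost is controlled by the uniform bound $\int_0^T e^{-v}\sum(H\log H+H\log^-(k_Q\cons_2/4)+c_Q)\,dv$ of \eqref{eq:delta8}, which contributes at most $\delta/16$ to the expectation. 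Adding the tail $\int_T^\infty$ contribution controlled by \eqref{eq:356nn}, this piece is bounded by $\tilde J(\rho,H)+\delta/2+o(1)$.

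For the terminal value $h(\tilde M^{\text{sc},n}_0,\tilde R^n)$, I would use that on $E_1E_2E_3$, $\tilde M^{\text{sc},n}_0=\tilde L^n_{T_n}$ is within $\cons_2/8$ of $L(T_n)=M_0=\gamma$ by definition of $E_3$, and that the flux $\tilde R^n$ converges to $\flu$: the martingale part of $\tilde R^{n,xy}$ vanishes as in the decomposition \eqref{eq:breakrr0}, while the compensator equals $\int_{[e^{t_n-T},T_n]}T_n^{-1}1_{\tilde X^n_s=i}\check H_s(i,j)\,ds$ up to the $O(e^{-T})$ contribution of $[0,e^{t_n-T}]$; by the same change of variable and weak limit used above, this compensator converges to $\int_0^T e^{-v}\rho_v(x)H_v(x,y)\,dv$, which matches $\flu^{xy}$ up to a tail of size bounded by \eqref{eq:356nn}. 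The Lipschitz property of $h$ then controls the error in $\E[h(\tilde M^{\text{sc},n}_0,\tilde R^n)1_{E_1E_2E_3}]$ by $O(\eps_0+\cons_2/8+\delta)$, already absorbed into the $5\delta$ slack by our choice \eqref{eq1211n}. Combining the four pieces and \eqref{eq:506n} yields the claimed bound. The main obstacle, and where care is needed, is controlling the ``tracking'' integral: one must simultaneously exploit the nondegeneracy guaranteed by $E_3$ (to keep $Q_{ij}(\tilde M^{\text{sc},n}_v)$ away from $0$ so $\ell$ stays continuous), the integrability bound \eqref{eq:basinteg} (to obtain uniform integrability across $n$ despite $H_v$ possibly being unbounded), and the $\beta^*_v=\rho_v$ identification from Lemma~\ref{lem:lem7.4} (to replace the random empirical occupation by the deterministic $\rho_v$ in the limit).
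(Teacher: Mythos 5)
Your route is essentially the paper's: the same time-segment decomposition of $J_n$ (with the $[0,e^{t_n-T}]$ cost bounded by $\delta/4$ exactly as in Lemmas \ref{lem:lem2prelim} and \ref{lem:lem3prelim}), the same change of variables turning the tracking cost into a $\beta^n$-integral, the identification $\beta^*_s=\rho_s$ on $\Om_0$ from Lemma \ref{lem:lem7.4}, the nondegeneracy $Q_{xy}(\tilde{M}^{\text{sc},n}_s)\ge k_Q\cons_2/4$ forced by $E_3$, and uniform integrability from \eqref{eq:basinteg}, \eqref{eq:356nn}, \eqref{eq:delta8}. But one step fails as written. For the terminal term you bound $\|\tilde{M}^{\text{sc},n}_0-\gamma\|$ by the $E_3$ threshold $\cons_2/8$ and assert that the resulting error $O(\eps_0+\cons_2/8+\delta)$ is ``absorbed into the $5\delta$ slack by \eqref{eq1211n}''. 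It is not: $\cons_2$ is the nondegeneracy constant of Lemma \ref{lem:step2}, chosen with no relation to $\delta$ or $\|h\|_{\mbox{\tiny{lip}}}$, and \eqref{eq1211n} constrains $\eps_0$ in terms of $\cons_2$, not the reverse; so $\|h\|_{\mbox{\tiny{lip}}}\cons_2/8$ need not be $O(\delta)$. The paper avoids this by using the weak limit rather than the $E_3$ threshold: on $E_1E_2$ one has $\|\tilde{M}^{\text{sc},n}_T-q\|\le 2\eps_0$ by \eqref{eq:227nn}, and the Gronwall bound \eqref{supbd254} then gives $\sup_{t\le T}\|M^*_t-M_t\|\le 2\eps_0e^T$ on $\Om_0$, so the error in $h$ at time $0$ is of order $3\eps_0e^T\|h\|_{\mbox{\tiny{lip}}}+2\eps_1\|h\|_\infty$, which \eqref{eq1211n} and \eqref{eq:delta8} do control. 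You already invoke exactly this mechanism in your tracking-segment paragraph, so the repair is at hand, but as stated the $h$-estimate does not close.

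A second point needs care in your flux argument (which, to your credit, you treat explicitly, whereas the paper suppresses the second argument of $h$ notationally in this lemma). The jumps accumulated on $[\sigma_1,e^{t_n-T}]$ at rate $V$ are not an $O(e^{-T})$ remainder in any naive sense: $V(x,y)$ can be as large as $e^T\cons_2^{-1}\sup_{(x,y)\in A_0}\flu^{xy}$ (see \eqref{eq:425nn}), and on $E_2$ the time-averaged contribution of this segment to $\tilde R^{n,xy}$ is approximately $e^{-T}q_xV(x,y)=\int_T^\infty e^{-s}\rho_s(x)H_s(x,y)\,ds$, i.e.\ precisely the tail of $\flu^{xy}$ that your truncated compensator $\int_0^T e^{-v}\rho_v(x)H_v(x,y)\,dv$ misses — this cancellation is the reason $V$ is defined the way it is. So you should either exploit the cancellation, or bound both discrepancies by the tail integral, which is small by \eqref{eq:356nn}; the bare ``$O(e^{-T})$'' claim by itself is not justified.
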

\begin{proof}
Recall from the discussion above  Lemma \ref{lem:lem7.4} that, $(\beta^n, \tilde{M}^{\text{sc},n})$ is tight, and if $(\beta^*, M^*)$ is a weak limit point, then disintegrating $\beta^*(dx\, ds) = \beta^*_s(dx) e^{-s}ds$ we have that, on $\Om_0$ (as in Lemma \ref{lem:lem7.4}) $\beta^*_s = \rho_s$ for a.e. $s\in [0,T]$. Furthermore we have that the inequality \eqref{supbd254} holds on $\Om_0$.

Now, since $M_0=\gamma$, from \eqref{eq:227nn} and Lemma \ref{lem:lem7.4} (see \eqref{supbd254}),
\begin{multline*}
\limsup_{n \to \infty} \E[(B_n-J_n)1_{E_1E_2E_3}]
= \limsup_{n \to \infty}\E[ h(\tilde{M}^{\text{sc},n}_0) 1_{\{\|\tilde{M}^{\text{sc},n}_0 - \gamma\| \le 3\eps_0e^T\}}1_{E_1E_2E_3}]\\
\le \limsup_{n \to \infty} \E(|h(\tilde{M}^{\text{sc},n}_0) - h(\gamma)|1_{\{\|\tilde{M}^{\text{sc},n}_0 - \gamma\| \le 3\eps_0e^T\}}) + 2\eps_1 \|h\|_{\infty} + \E(h(\gamma))\\
\le h(\gamma) + 2\eps_1 \|h\|_{\infty} +  \|h\|_{\mbox{\tiny{lip}}}3\eps_0e^T\\
\le h(\gamma) + 2\eps_1 \|h\|_{\infty} + 3\eps_0 \|h\|_{\mbox{\tiny{lip}}}e^T.
\end{multline*}

Next, using a change of variables as before,
\begin{align*}
\E[J_n 1_{E_1E_2E_3}] &=
e^{-t_n}\E[\tilde J^n[0, e^{t_n-T}]1_{E_1E_2E_3}]\\
&+ \mathbb{E} \left[1_{E_1E_2E_3}\sum_{(i,j) \in A_0} \int_{0}^T e^{-s} 
 1_{\{\tilde{Y}^{\text{sc},n}_s=i\}} Q_{ij}(\tilde{M}^{\text{sc},n}(s)) 
 \ell\left(H_s(i,j)/Q_{ij}(\tilde{M}^{\text{sc},n}(s)) \right) \, ds  \right].
\end{align*}
The limsup of the first term above can be bounded, as in Lemma \ref{lem:lem3prelim} (see \eqref{eq:511nn}), as
$$
\limsup_{n\to \infty}
e^{-t_n}\E[\tilde J^n[0, e^{t_n-T}]1_{E_1E_2E_3}]
\le  \delta/4.
$$
While, the second term  can be rewritten as
$$
\mathbb{E} \left[1_{E_1E_2E_3}\int_{S \times [0,T]}\sum_{y: y \neq x} 
  Q_{xy}(\tilde{M}^{\text{sc},n}(s)) 
 \ell\left(H_s(x,y)/Q_{xy}(\tilde{M}^{\text{sc},n}(s)) \right) \, \beta^n(dx\,ds)  \right].
$$
Note that on $E_3$, using the lower bound on $M(s)$ from Lemma \ref{lem:step2}, $\tilde{M}^{\text{sc},n}_s(x) \ge  \cons_2/4$ for all $x \in S$.
Thus, on $E_3$,
$$\inf_{s \in [0,T]} \inf_{(x,y) \in A_0} Q_{xy}(\tilde{M}^{\text{sc},n}(s)) \ge k_Q \cons_2/4.$$
Thus, using the inequality \eqref{eq:511nn}, on $E_1E_2E_3$, for $s\in [0,T]$,
\begin{align*}
Q_{xy}(\tilde{M}^{\text{sc},n}(s)) 
 \ell\left(H_s(x,y)/Q_{xy}(\tilde{M}^{\text{sc},n}(s))\right)
 \le Q_{xy}(M(s)) 
 \ell\left(H_s(x,y)/Q_{xy}(M(s))\right) \\
 + \frac{4}{k_Q\cons_2}(H_s(x,y)+1) \sup_{s\in [0,T]}|\tilde{M}^{\text{sc},n}(s)-M(s)|.
\end{align*}

Thus, 
\begin{align*}
&\mathbb{E} \left[1_{E_1E_2E_3}\int_{S \times [0,T]}\sum_{y: y \neq x} 
  Q_{xy}(\tilde{M}^{\text{sc},n}(s)) 
 \ell\left(H_s(x,y)/Q_{xy}(\tilde{M}^{\text{sc},n}(s)) \right) \, \beta^n(dx\,ds)  \right]\\
 & \le 
 \mathbb{E} \left[\int_{S \times [0,T]}\sum_{y: y \neq x} 
  Q_{xy}(M(s)) 
 \ell\left(H_s(x,y)/Q_{xy}(M(s)) \right) \, \beta^n(dx\,ds)  \right]\\
 &\quad+ \frac{4}{k_Q\cons_2}  \int_0^T e^{-s} (H_s(x,y) +1) ds\E\left(1_{E_1E_2E_3}\sup_{s\in [0,T]}|\tilde{M}^{\text{sc},n}(s)-M(s)|\right).
\end{align*}
From \eqref{supbd254} and since $\tilde{M}^{\text{sc},n} \to M^*$, we have
$$
\limsup_{n\to \infty} \PP(\sup_{0\le t \le T} |\tilde{M}^{\text{sc},n}_s - M_s| \ge 3 \eps_0e^T, 
|\tilde{M}^{\text{sc},n}_T - M_T| \le 2\eps_0) \le 2\eps_0.
$$
Thus, using \eqref{eq:227nn} 
\begin{align*}
\limsup_{n\to \infty}\E\left(1_{E_1E_2E_3}\sup_{s\in [0,T]}|\tilde{M}^{\text{sc},n}(s)-M(s)|\right) &\le
\limsup_{n\to \infty}\E\left(1_{|\tilde{M}^{\text{sc},n}_T - M_T| \le 2\eps_0}\sup_{s\in [0,T]}|\tilde{M}^{\text{sc},n}(s)-M(s)|\right)\\
&\le 3\eps_0e^T + 4\eps_0 \le 7\eps_0e^T.
\end{align*}
Noting that $s \mapsto  Q_{xy}(M(s)) 
 \ell\left(H_s(x,y)/Q_{xy}(M(s))\right)$ is a continuous and bounded function on $[0,T]$, by passing to the limit as $n\to \infty$ and combining the last two displays and recalling that $\beta^*(dx\, ds) = \rho_s(dx) ds$,
 on $\Om_0$, we have that 
 \begin{align*}
&\limsup_{n\to \infty}\mathbb{E} \left[1_{E_1E_2E_3}\int_{S \times [0,T]}\sum_{y: y \neq x} 
  Q_{xy}(\tilde{M}^{\text{sc},n}(s)) 
 \ell\left(H_s(x,y)/Q_{xy}(\tilde{M}^{\text{sc},n}(s)) \right) \, \beta^n(dx\,ds)  \right]\\
 & \le 
 \mathbb{E} \left[\int_{S \times [0,T]}e^{-s} \sum_{y: y \neq x} 
  Q_{xy}(M(s)) 
 \ell\left(H_s(x,y)/Q_{xy}(M(s)) \right) \, \rho_s(dx)\,ds  \right]\\
 &\quad + 2\eps_1 (\int_0^T e^{-s} \sum_{(i,j) \in A_0}  [H_s(i,j) \log H_s(i,j)
+ H_s(i,j) \log^-(k_Q\cons_2/4) + c_Q] ds)\\
 &\quad + \frac{28\eps_0e^T}{k_Q\cons_2}  \int_0^T e^{-s} (H_s(x,y)+1) ds.
\end{align*}
Since the second term on the right side above is bounded by $\delta/16$ by our choice of $\eps_1$ in \eqref{eq:delta8},
we have  that
\begin{align*}
&\limsup_{n\to \infty} \E[B_n 1_{E_1E_2E_3}]\\
&\le h(\gamma) + \tilde J(\rho, H) + \frac{28\eps_0e^T}{k_Q\cons_2}  \int_0^T e^{-s} (H_s(x,y) +1) ds + 3\eps_0e^T \|h\|_{\mbox{\tiny{lip}}} + 2\eps_1\|h\|_{\infty} +\delta/4 + \delta/16\\
&\le 4\delta + \delta/4  + \delta/16+ \delta/4 + \inf_{\gamma \in \clp(S)} [h(\gamma) + I(\gamma)],
\end{align*}
where the last $\delta/4$ in the last inequality is a direct consequence of \eqref{eq:delta8} and \eqref{eq1211n} and the $4\delta$ term arises on using \eqref{eq:506n}, Lemma \ref{lem:nondeg}, and Lemma \ref{lem:step2}. 
The result follows.
\end{proof}

\subsection{Completing the lower bound proof}
We can now complete the proof of the lower bound in \eqref{eq:Laplacelow}. Note that, from \eqref{eq:528nn},
 \begin{multline*}
\limsup_{n\to \infty}- \frac{1}{T_n} \log \mathbb{E} \left[ e^{- T_n h({L}^n_{T_n})} \right]
\le  \limsup_{n\to \infty}\mathbb{E}[B_n]\\ \le
\limsup_{n\to \infty}\mathbb{E}[B_n1_{E_1^c}]+
\limsup_{n\to \infty}\mathbb{E}[B_n1_{E_1E_2^c}]
+\limsup_{n\to \infty}\mathbb{E}[B_n1_{E_1E_2E_3^c}]
+ \limsup_{n\to \infty}\mathbb{E}[B_n1_{E_1E_2E_3}]
\\
\le \|h\|_{\infty}\eps_1 + \|h\|_{\infty}\eps_1+\delta/4 + \delta/2
+ 5\delta + \inf_{\gamma \in \clp(S)} [h(\gamma) + I(\gamma)]\\
\le 6\delta + \inf_{\gamma \in \clp(S)} [h(\gamma) + I(\gamma)],
\end{multline*}
where the second last inequality follows directly by applying the Lemmas \ref{lem:lem1prelim}, \ref{lem:lem2prelim}, \ref{lem:lem3prelim}, and \ref{lem:lem4prelim}, and the last equality uses \eqref{eq:delta8}.
Since $\delta>0$ is arbitrary, the lower bound \eqref{eq:Laplacelow} follows.

\section{Compactness of Level Sets}
\label{sec:cptlev}
In this section we prove that the function $I$ defined in \eqref{eq:IGamma} is a rate function, namely it has compact level sets.
\begin{proposition}
For every $L<\infty$, the set $K_L \doteq \{(\gamma, \flu) \in \clp(S) \times \R_+^a: I(\gamma, \flu)\le L\}$ is compact.
\end{proposition}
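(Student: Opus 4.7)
The plan is to show $K_L$ is bounded and closed as a subset of the compact space $\clp(S)\times \R_+^a$; since $\clp(S)$ is already compact, boundedness only needs to be established in the flux coordinate, and closedness reduces to lower semicontinuity of $I$.

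For \textbf{boundedness}, I would use the Young-type inequality
$ab \le \ell(a) + e^b - 1$ for $a \ge 0$, $b \in \R$ (which is just the Legendre duality between $\ell$ and $y\mapsto e^y-1$). Fix $(\gamma,\flu) \in K_L$ and, for each $\eps>0$, pick $\theta \in \cls_{\gamma,\flu}$ with $J(\theta) \le L+\eps$. Using $b=1$, the representation of $\flu^{xy}$ from Property \ref{proper:1}(d), the bounds $[\theta(s)]_1(x) \le 1$ and $1_{[0,Q_{xy}(M^\theta_s)]}(u)\le 1$, and the marginal identity in Property \ref{proper:1}(c) to compute $\int e^{-s} 1_{[0,Q_{xy}(M^\theta_s)]}(u)\theta(s)(dz\,dr\,du)ds \le c_Q/c$, I expect an inequality of the form
$$\flu^{xy} \le J(\theta) + (e-1)c_Q \le L + \eps + (e-1)c_Q,$$
giving a bound on $K_L$ uniform in $(x,y)$ as $\eps\to 0$.

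For \textbf{closedness}, take $(\gamma_n,\flu_n)\in K_L$ with $(\gamma_n,\flu_n)\to (\gamma,\flu)$. Choose $\theta_n \in \cls_{\gamma_n,\flu_n}$ with $J(\theta_n) \le L+1/n$, and form the $\clp_1$-valued measures $\nu_n(dx\,dr\,du\,ds) \doteq \theta_n(s)(dx\,dr\,du)e^{-s}ds$. The tightness of $\{\nu_n\}$ in $\clp(\Xi)$ follows exactly as in Lemma \ref{lem:tight}: $S\times[0,c]$ is compact, the $s$-marginal is fixed at $e^{-s}ds$, and the uniform bound $\sup_n \E\int \sum_{(x,y)\in A}\ell(r_{xy})[\nu_n]_2(dr) \le (L+1)/c$ together with superlinearity of $\ell$ makes the $r$-marginals tight. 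Since $\clp_1$ is closed, along a subsequence $\nu_n \to \nu^*$ with $\nu^*\in \clp_1$, and $\nu^*$ disintegrates as $\nu^*(dx\,dr\,du\,ds) = \theta^*(s)(dx\,dr\,du)e^{-s}ds$.

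The \textbf{main obstacle} is verifying $\theta^* \in \cls_{\gamma,\flu}$; this is essentially the deterministic analogue of Lemma \ref{lem:char}, with the same three delicate points. Part (a) passes to the limit immediately from $\gamma_n=[\nu_n]_1 \to [\nu^*]_1=\gamma$, and part (c) is preserved under weak convergence since $[\nu_n]_{134} = [\nu_n]_{14}\otimes c^{-1}du$. For parts (b) and (d), the difficulty is the discontinuity of the indicator $u\mapsto 1_{[0,Q_{xy}(M_s)]}(u)$ together with the fact that $M^{\theta_n} \ne M^{\theta^*}$; I would reuse the entropy trick from the displays around \eqref{eq:348n}, namely $|1_{[0,q_1]}(u)-1_{[0,q_2]}(u)|r \le \sigma^{-1}\ell(r) + \sigma^{-1}e^\sigma|1_{[0,q_1]}(u)-1_{[0,q_2]}(u)|$, together with the continuity of $Q$ and the uniform bound on $\int\sum\ell(r_{xy})[\nu_n]_2(dr)$, sending first $n\to\infty$ and then $\sigma\to\infty$. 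This lets one replace $Q_{xy}(M^{\theta_n}_s)$ by $Q_{xy}(M^{\theta^*}_s)$ in the passage to the limit, and then standard weak-convergence arguments combined with superlinearity of $\ell$ (to avoid loss of mass at infinity in $r$) give both the generator identity (b) and the flux identity (d) for $\theta^*$ with the limit data $(\gamma,\flu)$.

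Finally, the \textbf{lower semicontinuity of $J$} under weak convergence $\nu_n\to\nu^*$ follows once the indicators have been regularized as above: the integrand $e^{-s}\sum_{y\ne x}1_{[0,Q_{xy}(M^{\theta^*}_s)]}(u)\ell(r_{xy})$ is lower semicontinuous and nonnegative in $(x,r,u,s)$, so Fatou for weak convergence gives $J(\theta^*) \le \liminf_n J(\theta_n) \le L$, where at an intermediate step we again use the $\sigma$-inequality to replace $Q_{xy}(M^{\theta_n}_s)$ in the $n$-th cost by $Q_{xy}(M^{\theta^*}_s)$ up to an error that vanishes as $\sigma\to\infty$. Combining the last two paragraphs, $\theta^* \in \cls_{\gamma,\flu}$ and $I(\gamma,\flu)\le J(\theta^*)\le L$, so $(\gamma,\flu)\in K_L$. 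Hence $K_L$ is closed and bounded, and therefore compact. \qed
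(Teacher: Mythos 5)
There is a genuine gap, and it sits at the foundation of your closedness argument. You claim that the cost bound $J(\theta_n)\le L+1/n$ yields the uniform moment bound $\sup_n \int \sum_{(x,y)\in A}\ell(r_{xy})\,[\nu_n]_2(dr) \le (L+1)/c$, "exactly as in Lemma \ref{lem:tight}." It does not: by \eqref{eq:JTheta}, $J(\theta_n)$ integrates $\ell(r_{xy})$ only against the indicator $1_{[0,Q_{xy}(M^{\theta_n}_s)]}(u)$ and only for pairs $(x,y)$ whose first index agrees with the state coordinate of $\theta_n(s)$. Off that region — e.g.\ for $u>Q_{xy}(M^{\theta_n}_s)$, or for components $r_{zw}$ with $z$ different from the current state — a minimizing (or near-minimizing) $\theta_n$ may place mass at arbitrarily large values of $r$ at zero cost, so the indicator-free $\ell$-integral of $[\nu_n]_2$ need not be finite, let alone uniformly bounded. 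In Lemma \ref{lem:tight} this problem does not arise because the stochastic representation \eqref{eq:RepresentationFormula} charges $\ell(\hat\alpha^{\text{sc},n}_{ij}(s,u))$ over \emph{all} $(s,u)$, and the controls are first modified to equal $1$ off the active region; no such structure is available for a raw $\theta_n\in\cls_{\gamma_n,\flu_n}$. Since this same (unjustified) bound is what you invoke for tightness of the $r$-marginals, for the entropy/indicator-swap estimate as in \eqref{eq:740nn}, and for the uniform integrability needed to pass to the limit in Properties \ref{proper:1}(b),(d) and in the cost, the closedness proof does not go through as written.

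The missing idea is precisely the extra step the paper takes: before forming $\nu^n$, replace $\theta^n$ by $\tilde\theta^n$, obtained by collapsing the $r$-coordinate onto its conditional mean $v^n_{xy}(s,u)$ on the active region $\{[\theta^n(s)]_1(x)\neq 0,\ u\le Q_{xy}(M^n_s)\}$ and setting it to $1$ elsewhere (where $\ell(1)=0$). One checks that $\tilde\theta^n$ still lies in $\cls_{\gamma^n,\flu^n}$ with the same $M^n$ and the same flux, and by convexity of $\ell$ (Lemma \ref{lem:biconbex} / Jensen) its \emph{indicator-free} cost satisfies $c\int e^{-s}\sum_{y\neq x}\ell(r_{xy})\,\tilde\theta^n(s)\,ds \le J(\theta^n)\le L+1$, as in \eqref{eq:959nn}. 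With $\nu^n$ built from $\tilde\theta^n$, your remaining steps (tightness, the $\sigma$-inequality to swap $Q_{xy}(M^{\theta_n}_s)$ for $Q_{xy}(M^*_s)$, identification of (a)–(d) for the limit, and the Fatou-type lower bound for the cost) do go through and match the paper's proof; note also that the paper then avoids any semicontinuity issue with the indicator in the cost limit by bounding the indicator-free cost of the limit, as in \eqref{eq:1016nn}, whereas your Fatou step with the indicator kept (which is upper, not lower, semicontinuous in $u$) would additionally need a truncation plus an a.e.-continuity argument. Your boundedness paragraph via $ab\le \ell(a)+e^b-1$ is correct, though it becomes redundant once the tightness of $\{\flu^n\}$ is obtained from the corrected uniform integrability as in \eqref{eq:1041nn}.
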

\begin{proof}
Fix $L<\infty$ and let $K_L$ be as in the statement of the proposition. 
Let $\{(\gamma^n, \flu^n)\}_{n\in \N}$ be a sequence in $K_L$. We need to show that this sequence is relatively compact and if (along a subsequence) $(\gamma^n, \flu^n) \to (\gamma, \flu)$ then, 
$(\gamma, \flu) \in K_L$.
For each $n$, let $\theta^n \in \cls_{\gamma^n, \flu^n}$ be such that
$J(\theta^n) \le L+ n^{-1}$.
Recalling part (c) of Property \ref{proper:1}, for $s\ge 0$, we disintegrate $\theta^n(s)$ as
$$\theta^n(s)(dx\, dr\, du) = \varrho^n_s(dr\mid u,x) [\theta^n(s)]_1(dx) c^{-1} du.$$
We will denote $M^{\theta^n}$ as $M^n$.
Let, for $(x,y) \in A_0$ and $(s,u) \in \R_+ \times [0,c]$
$$ v^n_{xy}(s,u) \doteq 
1_{[\theta^n(s)]_1(x) \neq 0}1_{[0, Q_{xy}(M^n_s)]}(u) \int_{\R_+^a}  r_{xy} \varrho^n_s(dr\mid u,x)+ (1-1_{[\theta^n(s)]_1(x) \neq 0}1_{[0, Q_{xy}(M^n_s)]}(u)) $$
when the right side is finite.
Note that from part (b) of Property \ref{proper:1} the right side in the above expression is finite for a.e. $(s,u) \in \R_+ \times [0,c]$ and all $(x,y) \in A_0$. We set $v^n_{xy}(s,u)=1$ when the right side in the above display is infinite.

Now define $\tilde \theta^n \in \B(\R_+, \clp(S\times \R^a_+\times [0,c]))$ as
$$
\tilde \theta^n_s(dx\, dr\, du) = \delta_{v^n(s,u)}(dr)[\theta^n(s)]_1(dx) c^{-1} du.
$$
Note that since $[\tilde \theta^n(s)]_1=[\theta^n(s)]_1$, part (a) of 
Property \ref{proper:1} holds with $\gamma^n$ and from part (b) of
Property \ref{proper:1} we have that $M^n= M^{\theta^n} = M^{\tilde \theta^n}$.
Also by definition of $\tilde \theta^n$ it is easily seen that the identity in the last line of part (b) of Property \ref{proper:1} holds with $(M, \theta)$ replaced by $(M^n, \tilde \theta^n)$. By definition, part (c) of Property \ref{proper:1} holds with $\theta$ replaced by $\tilde \theta^n$.

Next note that, since $\theta^n \in \cls_{\gamma, \flu}$, from part (d) of Property \ref{proper:1} we have that, for $(x,y) \in A_0$,
\begin{align*}
\flu^{n,xy} &=c\int_{\R^a_+\times [0,c]\times \R_+} e^{-s} [\theta^n(s)]_1(x) 1_{[0, Q_{xy}(M^n_s)]}(u) r_{xy} \eta^n(s)(dr\, du\mid x) ds\\
&= c\int_{\R^a_+\times [0,c]\times \R_+} e^{-s} [\theta^n(s)]_1(x) 1_{[0, Q_{xy}(M^n_s)]}(u) r_{xy} \varrho^n_s(dr\mid u,x) c^{-1} du ds\\
&= c\int_{\R^a_+\times [0,c]\times \R_+} e^{-s} [\theta^n(s)]_1(x) 1_{[0, Q_{xy}(M^n_s)]}(u) r_{xy} \delta_{v^n(s,u)} (dr)  c^{-1} du ds\\
&= c\int_{\R^a_+\times [0,c]\times \R_+} e^{-s} [\tilde \theta^n(s)]_1(x) 1_{[0, Q_{xy}(M^n_s)]}(u) r_{xy} \tilde \eta^n(s)(dr\, du\mid x) ds,
\end{align*}
where
$\tilde \theta^n(s)(dx\, dr\, du) = \tilde \eta^n(s)(dr\, du \mid x)[\tilde \theta^n(s)]_1(dx)$ is the disintegration of $\tilde \theta^n(s)$. Thus we  have that part (d) of Property \ref{proper:1} hold with $(\theta, \flu)$ replaced by $(\tilde \theta^n, \flu^n)$.

Combining the above observations we have that $\tilde \theta^n \in \cls_{\nu^n, \flu^n}$.
Also, using the definition of $\tilde \theta^n$, and convexity of $\ell$ we see that
\begin{align}
J(\tilde \theta^n) &\le c\int_{\Xi} e^{-s} \sum_{y:y\neq x} \ell(r_{xy})\,  \tilde\theta^n(s)(dx\, dr\, du) ds\\
&\le c\int_{\Xi} e^{-s} \sum_{y:y\neq x}1_{[0, Q_{xy}(M_s)]}(u) \ell(r_{xy})\,  \theta^n(s)(dx\, dr\, du) ds \le M+ n^{-1}
\le M+1.\label{eq:959nn}
\end{align}
Now define $\nu^n \in \clp(S\times \R^a_+\times [0,c]\times \R_+)$ as 
$\nu^n(dx\,dr\,du\, ds) = \tilde \theta^n_s(dx\,dr\,du)e^{-s} ds$.
From the above estimate on $J(\tilde \theta^n)$ and superlinearity of $\ell$ we see that $\{\nu^n\}$ is a tight sequence of probability measures on $S\times \R^a_+\times [0,c]\times \R_+$
and, if $\nu^n \to \nu$ along a subsequence, then for any a.e. continuous and bounded $f: S\times \R_+\times [0,c] \to \R$
\be \label{eq:1019nnn}
\int_{\Xi} \sum_{y:y\neq x} f(x,s,u) r_{xy} \nu^n(dx\,dr\,du\, ds) \to
\int_{\Xi} f(x,s,u) r_{xy} \nu(dx\,dr\,du\, ds) \ee
as $n\to \infty$.

Note that we can write
\be\label{eq:1041nn}
\flu^{n,xy} = c\int_{\Xi}  1_{[0, Q_{xy}(M^n_s)]}(u) r_{xy}  \nu^n(dx\, dr\, du\, ds)
\ee
and thus the above tightness and uniform integrability property  says that the sequence $\{\flu^{n}\}$ is tight in $\R_+^a$. Also, as noted above, the sequence $\{\gamma^n\}$ is automatically tight and the tightness of $\{M^n\}$ in $C([0,T]: \clp(S))$ follows easily as well.
Now suppose along a subsequence (denoted again as $n$)
$$(\nu^n, M^n, \gamma^n, \flu^n) \to (\nu, M, \gamma, \flu).$$
Then using the lower semicontinuity of $\ell$ and Fatou's lemma, we have from \eqref{eq:959nn} that
\begin{align}
c\int_{\Xi}  \sum_{y:y\neq x} \ell(r_{xy})\,  \nu(dx\, dr\, du\, ds)
&\le
\liminf_{n\to \infty} 
c\int_{\Xi}  \sum_{y:y\neq x} \ell(r_{xy})\,  \nu^n(dx\, dr\, du\, ds)\\
&=
\liminf_{n\to \infty}  c\int_{\Xi} e^{-s} \sum_{y:y\neq x} \ell(r_{xy})\,  \tilde\theta^n(s)(dx\, dr\, du) ds \le M.
\label{eq:1016nn}
\end{align}
Note that $\nu$ can be disintegrated as
$\nu(dx\, dr\, du\, ds) = \theta(s)(dx\, dr\, du) e^{-s} ds$ for some $\theta \in \B(\R_+, \clp(S\times \R^a_+\times [0,c]))$.
To complete the proof it suffices to show that
$\theta \in \cls_{\gamma, \flu}$.

Note that part (a) of Property \ref{proper:1} is immediate from the convergence of $(\gamma^n, \nu^n) \to (\gamma, \nu)$ and on observing that
$\int_0^{\infty} e^{-s} [\tilde \theta^n(s)]_1 ds = [\nu^n]_1$ and
$\int_0^{\infty} e^{-s} [\theta(s)]_1 ds = [\nu]_1$.

A similar observation shows that the limit point $M$ introduced above satisfies the identity
$$M(t) = e^t \int_{t}^{\infty} e^{-s} [\theta(s)]_1 ds, \; t \ge 0.$$
From the bound in \eqref{eq:1016nn}, we see that the finiteness property in part (b) of Property \ref{proper:1} holds.
We now verify the second statement in part (b).
Since $\tilde \theta^n \in \cls_{\nu^n, \flu^n}$, from part (b)
of Property \ref{proper:1} we have that for any continuous and bounded $g: \R_+ \to \R$
\be \label{eq:452nn}
\int_{\Xi} g(s)\sum_{y: y \neq x} (f(y) - f(x)) 1_{[0, Q_{xy}(M^n_s)]}(u) r_{xy} \nu^n(dx\, dr\, du\, ds) = 0.
\ee
Also, by convergence of $M^n \to M$, and from arguments similar to those used in \eqref{eq:740nn}, we have that
\begin{align*}
&\int_{\Xi} g(s) |1_{[0, Q_{xy}(M^n_s)]}(u) - 
1_{[0, Q_{xy}(M_s)]}(u)| r_{xy} \nu^n(dx\, dr\, du\, ds)
\to 0,
\end{align*}
as $n\to \infty$. Here we have also used the bound in \eqref{eq:1016nn}, the superlinearity of $\ell$ and the boundedness of $M^n, M$.
From the above convergence, \eqref{eq:452nn},  and the convergence observed in \eqref{eq:1019nnn} we now see that
\begin{align*}
0=&\lim_{n\to \infty}
\int_{\Xi} g(s)\sum_{y: y \neq x} (f(y) - f(x)) 1_{[0, Q_{xy}(M^n_s)]}(u) r_{xy} \nu^n(dx\, dr\, du\, ds) \\
&= \lim_{n\to \infty}
\int_{\Xi} g(s)\sum_{y: y \neq x} (f(y) - f(x)) 1_{[0, Q_{xy}(M_s)]}(u) r_{xy} \nu^n(dx\, dr\, du\, ds) \\
&= \int_{\Xi} g(s)\sum_{y: y \neq x} (f(y) - f(x)) 1_{[0, Q_{xy}(M_s)]}(u) r_{xy} \nu(dx\, dr\, du\, ds).
\end{align*}
Since $g$ is arbitrary, recalling the definition of $\theta$, we now see that the last statement in part (b)
of Property \ref{proper:1} holds. Part (c) of Property \ref{proper:1} is immediate from the fact that $[\nu^n]_{134}(dx\,du\,ds)= e^{-s} ds [\theta^n(s)]_1(dx) c^{-1}du$. Indeed, this identity says that $[\nu^n]_{13}(dx\, du) = [\nu^n]_{1}(dx) c^{-1}du$ and sending $n\to \infty$ we have part (c) of Property \ref{proper:1}.

Finally a similar calculation as for the verification of part (b) now shows that, as $n\to \infty$
$$c\int_{\Xi}  1_{[0, Q_{xy}(M^n_s)]}(u) r_{xy}  \nu^n(dx\, dr\, du\, ds) \to  c\int_{\Xi}  1_{[0, Q_{xy}(M_s)]}(u) r_{xy}  \nu(dx\, dr\, du\, ds).
$$
From \eqref{eq:1041nn}, and since $\flu^n \to \flu$, we now get that part (d) of  Property \ref{proper:1} is also satisfied.

Thus we have shown that $\theta \in \cls_{\gamma, \flu}$ and the result follows. 
\end{proof}
\section{Illustrative Examples}
\label{sec:examples}
Several examples in the discrete-time setting of self-interacting chains are presented in \cite[Section 8]{Budhiraja2025}. These have natural continuous-time counterparts, which we do not detail here. Instead, we turn to a set of examples motivated by applications not covered in \cite{Budhiraja2025}. We begin with the non-interacting case, which falls within the framework of the Donsker–Varadhan theory.
\begin{description}
    \item[Donsker--Varadhan LDP] Suppose $Q^0 \in \mathcal{K}(S)$. The map $Q: \mathcal{P}(S) \rightarrow \mathcal{K}_0(S)$ with $Q(\gamma) = Q^0$ for all $\gamma \in \mathcal{P}(S)$ is obviously continuous and satisfies Assumptions \ref{assu:lowbd} with $\mathcal{A}^* =Q^0$. Theorem \ref{thm:main} in this case gives the large deviation principle for the occupation measure and empirical flux of an irreducible continuous-time Markov jump process on a finite state space (cf.\ \cite{donvar1,Bertini2015,Barato2015}). We now argue that the rate function \eqref{eq:Alternative} in this case 
    coincides with the Donsker-Varadhan rate function $\IDW_{Q^0}$ given in the Introduction. Fix $(\gamma, \flu) \in \clp(S) \times \R_+^{a}$. We first show that
    $\bar I(\gamma, \flu) \le \IDW_{Q^0}(\gamma, \flu)$.  Assume, without loss of generality that $\IDW_{Q^0}(\gamma, \flu)<\infty$.
    Define, for $s\ge 0$, $\rho(s) \doteq \gamma$, and
    $$H_s(x,y) = \gamma_x^{-1} \flu^{xy}1_{\gamma_x \neq 0} + Q^0_{xy}1_{\gamma_x = 0}, \; (x,y) \in A,$$
    and $H_s(x,x) \doteq - \sum_{y: y \neq x} H_s(x,y)$, for $x \in S$.

    Note that Property \ref{proper:2}(a) and (d) hold with the above definition of $(\rho_s, H_s)$. Also, $M(s)$ defined in part (b) of Property \ref{proper:2} in this case equals $\gamma$ for all $s\ge 0$. Furthermore, since $\IDW_{Q^0}(\gamma, \flu)<\infty$, we must have that, for $(x,y) \in A$, if $Q^0_{xy}=0$, $\flu^{xy}=0$ and consequently $H_s(x,y) =0$ as well. This verifies part (b) of Property \ref{proper:2}.  Again, since $\IDW_{Q^0}(\gamma, \flu)<\infty$, we have that 
    \be \label{eq:142nn} \sum_{x: x \neq y}\flu^{yx} = \sum_{x: x \neq y} \flu^{xy} \mbox{ for all } y \in S.\ee
    This says that $\sum_{y\in S} \gamma_xH_s(x,y) =0$ for all $y\in S$ which verifies (c). Thus we have shown that
    $(\rho, H) \in \tilde \cls_{\gamma, \flu}$. Also, clearly, $\tilde J(\rho, H) = \IDW_{Q^0}(\gamma, \flu)$. The inequality
    $\bar I(\gamma, \flu) \le \IDW_{Q^0}(\gamma, \flu)$ now follows.

    We now argue the reverse inequality: $\IDW_{Q^0}(\gamma, \flu)\le \bar I(\gamma, \flu)$.
    Fix $\delta>0$ and suppose $(\rho, H) \in \tilde \cls_{\gamma, \flu}$ is such that
    \be \tilde J(\rho, H) \le \bar I(\gamma, \flu) + \delta. \ee
    Using the convexity property in Lemma \ref{lem:biconbex}
    \begin{align*}
\tilde J(\rho, H) &= \int_{[0,\infty)}  \sum_{(x,y)\in A_0} e^{-s}  Q^0_{xy} \rho_s(x) \ell \left(\frac{\rho_s(x)H_s(x,y)}{\rho_s(x)Q^0_{xy}}\right) \,   ds\\
&\ge \sum_{(x,y)\in A_0} \left(\int_{[0,\infty)}e^{-s}  Q^0_{xy} \rho_s(x) ds\right)
\ell \left( \frac{\int_{[0,\infty)}e^{-s}\rho_s(x)H_s(x,y) ds}{\int_{[0,\infty)}e^{-s}  Q^0_{xy} \rho_s(x) ds}\right)\\
&= \sum_{(x,y)\in A_0} Q^0_{xy} \gamma_x \ell \left(\frac{\flu^{xy}}{\gamma_xQ^0_{xy}}\right),
    \end{align*}
    where in the last line we have used Property \ref{proper:2}(a) and (d).
    Also, from Property \ref{proper:2}(c) it is easily checked that \eqref{eq:142nn} holds.  This shows that the last term in the above display equals $\IDW_{Q^0}(\gamma, \flu)$.  Thus we have shown that
    $\IDW_{Q^0}(\gamma, \flu) \le \bar I(\gamma, \flu) + \delta$. Since $\delta>0$ is arbitrary, the desired reverse inequality now follows.\\
    
    \item[Affine Autochemotaxis: Self-Attraction] Autochemotaxis is the ability of organisms to communicate through local secretions that form
    chemical trails. We consider $S = \set{1, \cdots, d}$ the set of all locations that an autochemotactic organism can visit. Suppose $Q^0 \in \mathcal{K}(S)$. Define for $K \in (0,\infty)${, interpreted as a relative measure of the chemicals' lifetime}, $Q: \mathcal{P}(S) \rightarrow \mathcal{K}_0(S)$ as
    $$
    Q_{ij}(\gamma) = Q^0(i,j) (\gamma(j) K  + 1), \quad (i,j) \in A,
    $$ 
    with $Q_{ii}(\gamma) = -\sum_{j: j\neq i} Q_{ij}(\gamma)$. The rate matrix $Q(\gamma)$ is such that the larger the empirical weight at $j$, i.e., the more the autochemotactic organism visit state $j$, the higher the jump rate into it. Note that $Q$ is affine,  irreducible and satisfies Assumption \ref{assu:lowbd} with $\mathcal{A}^* =Q^0$ (see discussion in Remark \ref{rem:whenhold}).  Thus Theorem \ref{thm:main} applies in this setting. \\

    \item[Packet Routing: Source-Destination Congestion] Consider a network of routers represented by the finite set $S = \{1,\dots,d\}$. 
    A tracer packet moves through this network, being forwarded from router to router according to baseline routing rates $Q^0 \in \mathcal{K}(S)$. We measure the congestion of a router by the fraction of time the packet has spent at that router up to the current time. The congestion in the network, in turn, modifies the routing rates of the packet so that if congestion at routers $i$ and $j$ is high, the rate at which the packet moves from router $i$ to $j$ is low. This motivates a self-interacting rate matrix 
    $$
    Q_{ij}(\gamma) \coloneqq \bigl(1 - \alpha_i \gamma(i) - \beta_j \gamma(j)\bigr) Q^0_{ij}, \quad (i,j) \in S \times S, \; i \neq j,
    $$ 
    with $Q_{ii}(\gamma) = -\sum_{j: j \neq i} Q_{ij}(\gamma)$ for $\gamma \in \mathcal{P}(S)$. 
    Here $\alpha_i,\beta_j \ge 0$ are congestion parameters satisfying $\alpha_i + \beta_j \le 1-\varepsilon$ for some $\varepsilon > 0$. This model serves as a useful  probe for network congestion that does not require the modeling of the whole network traffic explicitly.   Since $\gamma \mapsto Q(\gamma)$ is affine and for $(i,j) \in A$, $Q_{ij}(\gamma) \;\ge\; \varepsilon Q^0_{ij}$, we see that Assumption \ref{assu:lowbd} is satisfied (see discussion in Remark \ref{rem:whenhold}) and thus Theorem  \ref{thm:main} applies for this model. \\

    \item[Chemical State Switching with Catalytic Channels] Let $S = \{1,\dots,d\}$ denote a finite set of chemical states available to a single molecule. We track the trajectory of a single molecule through the state space $S$.
    At each time $t$, if the molecule is in state $i \in S$, it may switch to state $j \in S$ through one of several possible \emph{catalytic channels} labelled by $k \in S$. Physically, this corresponds to a (different) molecule in state $k$ acting as a catalyst for the reaction.        Formally, we write
    $$
    i \xrightarrow[\;k\;]{Q^{(k)}_{ij}} j ,
    $$
    to mean that the transition $i \to j$ occurs via the $k$-th channel at rate $Q^{(k)}_{ij}$. 
    The channel index $k$ plays the role of a catalyst: it facilitates the transformation $i \to j$ but is not itself consumed.

     We use the empirical occupation mass $\gamma(k)$ as a surrogate measure for abundance of chemical of type $k$ available and thus  the reaction in channel $k$ is enhanced as $\gamma(k)$ increases.
    The effective rate matrix is then defined by
    $$
    Q_{ij}(\gamma) \coloneqq \sum_{k \in S} \gamma(k) \, Q^{(k)}_{ij}, 
    \qquad (i,j) \in S \times S,\; i \neq j,
    $$
    with 
    $$
    Q_{ii}(\gamma) = - \sum_{i \neq j} Q_{ij}(\gamma).
    $$ 
    
    Thus the trajectory of the molecule is a  self-interacting jump process on $S$, whose transition structure depends on its own history via $\gamma$. We suppose that for some $\cla^* \in \clk(S)$ and the associated $A_0$ as in Assumption \ref{assu:lowbd}, $Q^{(k)}_{ij}>0$ for all $k\in S$ and $(i,j) \in A_0$.
    The assumptions of Theorem~\eqref{thm:main} are satisfied: Assumption~\ref{assu:lowbd} (b) holds trivially, Assumption~\ref{assu:lowbd} (c) holds as well, and in particular $Q(\gamma)$ is irreducible for all $\gamma \in \clp(S)$. From the discussion in Remark \ref{rem:whenhold} we see that parts (a) and (d) are satisfied as well.
    Hence this family of models satisfies all the conditions of Theorem~\eqref{thm:main}.

\end{description}

\section*{Acknowledgments}
A.B.\ was partially supported by NSF DMS-2152577, NSF  DMS-2134107, NSF DMS-2506010.\\
F.C.\ gratefully acknowledges support from the Blanceflor Foundation for a research visit to the Department of Statistics and Operations Research at the University of North Carolina, Chapel Hill, where this work was initiated, and thanks A.B.\ for additional financial support during the visit. This work was also supported by a Leverhulme Early Career Fellowship (ECF-2025-482) and EPSRC grant no.\ EP/V031201/1.

\bibliographystyle{plain}
\bibliography{cas-refs}

\begin{thebibliography}{10}

\bibitem{budfrawat2}
{A.~Budhiraja}, {N.~Fraiman}, and {A.~Waterbury}.
\newblock Approximating quasi-stationary distributions with interacting
  reinforced random walks.
\newblock {\em ESAIM: PS}, 26:69--125, 2022.

\bibitem{AFP88}
D.~Aldous, B.~Flannery, and J.L. Palacios.
\newblock Two applications of urn processes: the fringe analysis of search
  trees and the simulation of quasi-stationary distributions of {M}arkov
  chains.
\newblock {\em Probability in the Engineering and Informational Sciences},
  2(3):293--307, 1988.

\bibitem{Amit1983}
D.~J. Amit, G.~Parisi, and L.~Peliti.
\newblock Asymptotic behavior of the "true" self-avoiding walk.
\newblock {\em Physical Review B}, 27:1635, 2 1983.

\bibitem{Barato2015}
Andre~C. Barato and Raphaël Chetrite.
\newblock A formal view on level 2.5 large deviations and fluctuation
  relations.
\newblock {\em Journal of Statistical Physics}, 160:1154--1172, 2015.

\bibitem{benclo}
M.~Bena{\"{i}}m and B.~Cloez.
\newblock A stochastic approximation approach to quasi-stationary distributions
  on finite spaces.
\newblock {\em Electron. Communications in Probability}, 20:1--13, 2015.

\bibitem{BenClo18}
M.~Bena\"{i}m, B.~Cloez, and F.~Panloup.
\newblock Stochastic approximation of quasi-stationary distributions on compact
  spaces and applications.
\newblock {\em The Annals of Applied Probability}, 28(4):2370--2416, 2018.

\bibitem{Benaim2002}
M.~Benaïm, M.~Ledoux, and O.~Raimond.
\newblock Self-interacting diffusions.
\newblock {\em Probability Theory and Related Fields}, 122:1--41, 1 2002.

\bibitem{Benaim2003}
M.~Benaïm and O.~Raimond.
\newblock Self-interacting diffusions {I}{I}: {C}onvergence in law.
\newblock {\em Annales de l'Institut Henri Poincare (B) Probability and
  Statistics}, 39:1043--1055, 11 2003.

\bibitem{Benaim2005}
M.~Benaïm and O.~Raimond.
\newblock Self-interacting diffusions. {I}{I}{I}. {S}ymmetric interactions.
\newblock {\em Annals of Probability}, 33:1716--1759, 9 2005.

\bibitem{Benaim2011}
M.~Benaïm and O.~Raimond.
\newblock Self-interacting diffusions {I}{V}: {R}ate of convergence.
\newblock {\em Electronic Journal of Probability}, 16:1815--1843, 1 2011.

\bibitem{Bertini2015}
Lorenzo Bertini, Alessandra Faggionato, and Davide Gabrielli.
\newblock Large deviations of the empirical flow for continuous time {M}arkov
  chains.
\newblock {\em Ann.\ Inst.\ H.\ Poincaré Probab.\ Statist.}, 51:867--900,
  2015.

\bibitem{blanchet2016}
J.~Blanchet, P.~Glynn, and S.~Zheng.
\newblock Analysis of a stochastic approximation algorithm for computing
  quasi-stationary distributions.
\newblock {\em Advances in Applied Probability}, 48(3):792–811, 2016.

\bibitem{Brandner2025a}
Kay Brandner.
\newblock Dynamics of microscale and nanoscale systems in the weak-memory
  regime.
\newblock {\em Physical Review Letters}, 134:037101, 1 2025.

\bibitem{Brandner2025b}
Kay Brandner.
\newblock Dynamics of microscale and nanoscale systems in the weak-memory
  regime: A mathematical framework beyond the {M}arkov approximation.
\newblock {\em Physical Review E}, 111:014137, 1 2025.

\bibitem{buddupbook}
A.~Budhiraja and P.~Dupuis.
\newblock {\em Analysis and Approximation of Rare Events: Representations and
  Weak Convergence Methods}.
\newblock Probability Theory and Stochastic Modelling. Springer US, 2019.

\bibitem{budhiraja2011variational}
A.~Budhiraja, P.~Dupuis, and V.~Maroulas.
\newblock Variational representations for continuous time processes.
\newblock {\em Annales de l'IHP Probabilit{\'e}s et statistiques},
  47(3):725--747, 2011.

\bibitem{BudWat3}
A.~Budhiraja and A.~Waterbury.
\newblock Empirical measure large deviations for reinforced chains on finite
  spaces.
\newblock {\em Systems \& Control Letters}, 169:105379, 2022.

\bibitem{Budhiraja2025}
A.~Budhiraja, A.~Waterbury, and P.~Zoubouloglou.
\newblock Large deviations for empirical measures of self-interacting {M}arkov
  chains.
\newblock {\em Stochastic Processes and their Applications}, 186:104640, 8
  2025.

\bibitem{Chambeu2011}
S.~Chambeu and A.~Kurtzmann.
\newblock Some particular self-interacting diffusions: {E}rgodic behaviour and
  almost sure convergence.
\newblock {\em Bernoulli}, 17:1248--1267, 11 2011.

\bibitem{Coppersmith1986}
D.~Coppersmith and P.~Diaconis.
\newblock Random walks with reinforcement.
\newblock {\em Unpublished manuscript}, 1986.

\bibitem{Cranston1996}
M.~Cranston and T.~S. Mountford.
\newblock The strong law of large numbers for a {B}rownian polymer.
\newblock {\em Annals of Probability}, 24:1300--1323, 7 1996.

\bibitem{Diaconis2000}
P.~Diaconis, S.~Holmes, and R.~M. Neal.
\newblock Analysis of a nonreversible {M}arkov chain sampler.
\newblock {\em Annals of Applied Probability}, 10:726--752, 8 2000.

\bibitem{donvar1}
M.D. Donsker and S.R.S. Varadhan.
\newblock Asymptotic evaluation of certain {M}arkov process expectations for
  large time, {I}.
\newblock {\em Communications in Pure and Applied Mathematics}, 28:1--47, 1975.

\bibitem{Esposito2012}
Massimiliano Esposito.
\newblock Stochastic thermodynamics under coarse graining.
\newblock {\em Physical Review E}, 85:041125, 4 2012.

\bibitem{Essler2024}
F.~H.L. Essler and W.~Krauth.
\newblock Lifted {T}{A}{S}{E}{P}: A solvable paradigm for speeding up
  many-particle {M}arkov chains.
\newblock {\em Physical Review X}, 14:041035, 10 2024.

\bibitem{Franchini17}
S.~Franchini.
\newblock Large deviations for generalized {P}olya urns with arbitrary urn
  function.
\newblock {\em Stochastic Processes and their Applications},
  127(10):3372--3411, 2017.

\bibitem{Hoppe1984}
F.~M. Hoppe.
\newblock Pólya-like urns and the {E}wens' sampling formula.
\newblock {\em Journal of Mathematical Biology}, 20:91--94, 8 1984.

\bibitem{HuaLiuKai}
X.~Huang, Y.~Liu, and K.~Xiang.
\newblock Large deviation principle for empirical measures of once-reinforced
  random walks on finite graphs.
\newblock {\em ArXiv}, 2022.

\bibitem{Iacopini2020}
I.~Iacopini, G.~Di Bona, E.~Ubaldi, V.~Loreto, and V.~Latora.
\newblock Interacting discovery processes on complex networks.
\newblock {\em Physical Review Letters}, 125, 2020.

\bibitem{Kanazawa2024}
Kiyoshi Kanazawa and Didier Sornette.
\newblock Standard form of master equations for general non-{M}arkovian jump
  processes: The {L}aplace-space embedding framework and asymptotic solution.
\newblock {\em Physical Review Research}, 6:023270, 6 2024.

\bibitem{Kleptsyn2012}
V.~Kleptsyn and A.~Kurtzmann.
\newblock Ergodicity of self-attracting motion.
\newblock {\em Electronic Journal of Probability}, 17:1--37, 1 2012.

\bibitem{Kurtzmann2010}
A.~Kurtzmann.
\newblock The {O}{D}{E} method for some self-interacting diffusions on
  $\mathbb{R}^d$.
\newblock {\em Annales de l'institut Henri Poincare (B) Probability and
  Statistics}, 46:618--643, 8 2010.

\bibitem{Lapolla2019}
Alessio Lapolla and Aljaž Godec.
\newblock Manifestations of projection-induced memory: General theory and the
  tilted single file.
\newblock {\em Frontiers in Physics}, 7:493792, 11 2019.

\bibitem{Mori1965}
Hazime Mori.
\newblock Transport, collective motion, and {B}rownian motion.
\newblock {\em Progress of Theoretical Physics}, 33:423--455, 3 1965.

\bibitem{Peliti2021}
L.~Peliti and S.~Pigolotti.
\newblock {\em Stochastic Thermodynamics : An Introduction}.
\newblock Princeton University Press, 2021.

\bibitem{Pemantle1988}
R.~Pemantle.
\newblock {\em Random processes with reinforcement}.
\newblock PhD thesis, Massachusetts Institute of Technology, 1988.

\bibitem{Pemantle1992}
R.~Pemantle.
\newblock Vertex-reinforced random walk.
\newblock {\em Probability Theory and Related Fields}, 92:117--136, 1992.

\bibitem{Raimond1997}
O.~Raimond.
\newblock Self-attracting diffusions: {C}ase of the constant interaction.
\newblock {\em Probability Theory and Related Fields}, 107:177--196, 10 1997.

\bibitem{Schilling2022}
Tanja Schilling.
\newblock Coarse-grained modelling out of equilibrium.
\newblock {\em Physics Reports}, 972:1--45, 8 2022.

\bibitem{sch01}
S.J. Schreiber.
\newblock Urn models, replicator processes, and random genetic drift.
\newblock {\em SIAM Journal on Applied Mathematics}, 61(6):2148--2167, 2001.

\bibitem{Seifert2012}
U.~Seifert.
\newblock Stochastic thermodynamics, fluctuation theorems and molecular
  machines.
\newblock {\em Reports on Progress in Physics}, 75:126001, 12 2012.

\bibitem{sinliv}
B.~Sinervo and C.M. Lively.
\newblock The rock--paper--scissors game and the evolution of alternative male
  strategies.
\newblock {\em Nature}, 380(6571):240--243, 1996.

\bibitem{Tria2014}
F.~Tria, V.~Loreto, V.~D.P. Servedio, and S.~H. Strogatz.
\newblock The dynamics of correlated novelties.
\newblock {\em Scientific Reports}, 4:1--8, 7 2014.

\bibitem{Zhang14}
Y.~Zhang.
\newblock Large deviations in the reinforced random walk model on trees.
\newblock {\em Probability Theory and Related Fields}, 160:655--678, 2014.

\bibitem{Zwanzig1973}
Robert Zwanzig.
\newblock Nonlinear generalized {L}angevin equations.
\newblock {\em Journal of Statistical Physics}, 9:215--220, 11 1973.

\end{thebibliography}

\bigskip
\noindent \scriptsize{\textsc{ 
\begin{minipage}{0.5\linewidth}
A. Budhiraja \newline
Department of Statistics and Operations Research
\newline
University of North Carolina\newline
Chapel Hill, NC 27599, USA\newline
email:  budhiraj@email.unc.edu     
\end{minipage}
\hfill
\begin{minipage}{0.4\linewidth}
F.Coghi\newline
School of Physics and Astronomy,\newline
and, Centre for the Mathematics and Theoretical Physics of Quantum Non-Equilibrium Systems,\\
University of Nottingham,\newline
 Nottingham, NG7 2RD, United Kingdom\newline
 email: francesco.coghi@nottingham.ac.uk
\end{minipage}}
}
\end{document}